\documentclass[english]{article} 
\usepackage[utf8]{inputenc}
\usepackage[T1]{fontenc}
\usepackage{lmodern}
\usepackage[a4paper]{geometry}
\usepackage{enumitem}
\usepackage{verbatim}
\usepackage{graphicx}
\usepackage{amsmath}
\usepackage{amsthm}
\usepackage{amsfonts}
\usepackage{amssymb}
\usepackage{amsmath}
\usepackage{mathrsfs}
\usepackage{framed}
\usepackage{tikz}
\usepackage{pgfplots}
\pgfplotsset{compat=1.15}
\usetikzlibrary{intersections}
\usepackage[english]{babel}
\usepackage[citecolor=blue,colorlinks=true]{hyperref}
\usepackage{titlesec}
\usepackage{esint}

\theoremstyle{plain}
\newtheorem{theorem}{Theorem}[section]
\newtheorem{proposition}[theorem]{Proposition}
\newtheorem{lemma}[theorem]{Lemma}
\newtheorem{corollary}[theorem]{Corollary}
\newtheorem{proposition-definition}[theorem]{Proposition-Definition}

\theoremstyle{definition}
\newtheorem{definition}{Definition}[section]

\theoremstyle{remark}
\newtheorem{remark}{Remark}[section]

\usepackage{bbm}

\numberwithin{equation}{section}
\newcommand{\Z}{\mathbb{Z}}

\newcommand{\R}{\mathbb{R}}

\def\ocirc#1{\ifmmode\setbox0=\hbox{$#1$}\dimen0=\ht0
    \advance\dimen0 by1pt\rlap{\hbox to\wd0{\hss\raise\dimen0
    \hbox{\hskip.2em$\scriptscriptstyle\circ$}\hss}}#1\else
    {\accent"17 #1}\fi} 

\newcommand{\eps}{\varepsilon}
\newcommand{\PP}{\mathbb{P}}
\newcommand{\E}{\mathbb{E}}

\newcommand{\dual}[2]{\langle #1, #2\rangle}
\newcommand{\duall}[2]{\langle #1, #2\rangle}

\newcommand{\ttt}{\mathbf{t}}
\newcommand{\sss}{\mathbf{s}}
\newcommand{\uuu}{\mathbf{u}}
\newcommand{\vvv}{\mathbf{v}}

\newcommand{\nd}{I}
\newcommand{\snd}{J}
\newcommand{\ray}{\rho}
\newcommand{\cv}{a} 

\newcommand{\sig}{\mathbf{\sigma}}

\newcommand{\di}{\mathrm{d}}
\newcommand{\dt}{\mathrm{d}t}

\newcommand{\ds}{\mathrm{d}s}

\newcommand{\divv}{\mathrm{div}}

\DeclareMathOperator{\dist}{dist}

\usepackage{hyperref}

\usepackage[textsize=tiny]{todonotes}

\usepackage{authblk}

\begin{document}

\title{A quantitative averaging lemma for spatially dependent vector fields}
\author{Paul Alphonse, Billel Guelmame and Julien Vovelle}

\date{\today}
\maketitle
\begin{abstract}
	We prove a quantitative averaging lemma for spatially dependent vector fields. Our proof is based on an iteration of the regularizing operator and some elementary considerations about the local inversion theorem.
\end{abstract}

\medskip

{\bf AMS Classification:} 35B65, 35Q49, 26B10.
\medskip

{\bf Key words:} averaging lemma; stationary transport equation; local inversion theorem.

\tableofcontents

\section{Introduction}

\subsection{Context}

The purpose of this paper is to give a \emph{quantitative} averaging lemma for \emph{spatially dependent} vector fields, using elementary tools of real analysis, see Theorem~\ref{th:AVLemmaII} below. There is a  very long list of statements which can be termed ``averaging lemma''. Such kind of results establish a gain of regularity, or at least some compactness properties, by gathering (typically by means of an integration procedure) a punctual information on directional derivatives. Depending on the kind of equations and the functional framework that are considered, many different results can be obtained. This is illustrated in the review paper \cite{Arsenio2015} and \cite{ArsenioLerner2021} for the sole case of averaging lemmas for a kinetic transport operator $v\cdot\nabla_x$. However, our interest is specifically in averaging lemmas for equations involving space-dependent vector fields $a(x,v)\cdot\nabla_x$, so, besides \cite{Arsenio2015,ArsenioLerner2021} and references therein, we will solely make reference to the following works:
\begin{itemize}
	\item  the very first papers \cite{Agoshkov1984,GolsePerthameSentis1985,GolseLionsPerthameSentis1988,DiPernaLionsMeyer1991} on the subject, 
	\item the paper \cite{Gerard91}, where the notion of microlocal defect measure is introduced and in particular applied to the derivation of averaging lemmas with non constant (\textit{i.e.} space-dependent) coefficients, see  \cite[Theorem~2.5]{Gerard91} (see also the extension given in \cite{GerardGolse92}),
	\item various results by Mitrovi\'c and co-authors: \cite{LazarMitrovic2012} in an $L^2$-framework, \cite{LazarMitrovic2017} and \cite{ErcegMisurMitrovic2023} in a $L^p$-framework,
	\item the papers \cite{GerardGolse92} by G\'erard and Golse, and the recent preprint \cite{ErcegKarlsenMitrovic25} by Erceg, Karlsen and Mitrovi\'c.
\end{itemize}

The averaging lemmas in \cite{ErcegMisurMitrovic2023,Gerard91,LazarMitrovic2012,LazarMitrovic2017} are of qualitative nature (compactness properties on the average of the unknown). In the last references \cite{GerardGolse92} and \cite{ErcegKarlsenMitrovic25} (as in our main result Theorem~\ref{th:AVLemmaII}) some quantitative results are obtained. To put it in a very concise way, a gain of $1/2$ derivatives  in a $L^2$-framework is obtained in \cite{GerardGolse92}, and a positive gain of derivative in a Lebesgue space is obtained in \cite{ErcegKarlsenMitrovic25}. Both results are based on relatively elaborated tools from Fourier analysis (combined with a doubling of variable argument in \cite{ErcegKarlsenMitrovic25}).

\subsection{Main results}

We consider typically the solution $f(x,v)$ to the equation
\[
	f(x,v)-a(x,v)\cdot\nabla_x f(x,v)=g(x,v),\quad (x,v)\in\R^d\times\R^m,
\]
where $g$ is some given function in $L^p(\R^d\times\R^m)$, $p\in(1,+\infty)$, and, for each $v\in\R^m$, $x\mapsto a(x,v)$ is a vector field on $\R^d$. We will prove that, if the vector-field $a(\cdot,v)$ is sufficiently varying with $v$, then the ``average''
\begin{equation}\label{average rho psi}
    \varrho_\psi(x):=\int_{\R^m}f(x,v)\psi(v)\,\mathrm dv
\end{equation}
has some regularity properties that can be quantified in a fractional Sobolev space $W^{\beta,p}(\R^d)$. In \eqref{average rho psi}, the test-function is typically in $C^\infty_c(\R^m)$ (although less regularity and localisation properties are sufficient to get a result). To emphasize the averaging procedure (and also to emphasize the point of view that we adopt here, which is to iterate $d$ times a run-and-tumble dynamics, see Remark~\ref{rem:run-and-tumble}), we will consider the following framework, where the velocity space is viewed as a probability space.

Let $(\Omega,\PP)$ be a probability space, let $d\geq 1$ and let  $\cv\colon\Omega\to C^2_b(\R^d;\R^d)$ be a random vector field such that, almost surely,
\begin{equation}\label{definition norme}
    \|\cv\|_{C^2_b(\R^d;\R^d)} := \sum_{|\gamma|\leq 2}\Vert\partial^\gamma_x\cv\Vert_{L^{\infty}(\mathbb R^d)} \leq M,
\end{equation}
for a constant $M> 0$. We assume that $\cv$ is non-stationary in the following sense: there exists a constant $C>0$ and $\alpha\in(0,1]$ such that, for all $x\in\R^d$, all $\varepsilon\in(0,1)$ and all directions $\sigma\in\R^d$ such that $|\sigma|=1$,
\begin{equation}\label{nd-a}
	\PP(|\cv(x)\cdot\sigma|<\eps)\leq C\eps^\alpha.
\end{equation}
Let $p\in(1,+\infty)$ and $f,g\in L^p(\Omega\times\mathbb R^d)$ satisfying the following equation: almost surely,
\begin{equation}\label{f plus agrad f}
	f-\cv(x)\cdot\nabla_xf = g\quad \text{in $\mathcal{D}'(\R^d)$.}
\end{equation}
Let us finally define
\begin{equation}\label{value delta}
	\delta(p,\alpha,d):=\frac{2}{\max(p,p')} \frac1{4d}\frac{1}{1+\frac{2d}{\alpha}} > 0,
\end{equation}
where  $p'\in(1,+\infty)$ is given by the relation $1/p+1/p' = 1$. 

The main result of the paper is that when $p=2$, the average of $f$ has almost $\delta(2,\alpha,d)$-regularity, and it is stated as follows.

\begin{theorem}[Averaging lemma]\label{th:AVLemmaII} 
 Assume that $p=2$. Then, there exists a positive constant $r_0=r_0(d,\alpha)>0$, such that, for any $\theta < \delta(2,\alpha,d)$, the average $\varrho:=\E\left[f\right]$ satisfies the estimate 
\begin{equation}\label{AV-lemII}
	\forall r\in(0,r_0], \quad\sup_{|y| \le r}\|\tau_y\varrho-\varrho\|_{L^2(\R^d)}\leq \kappa (\|f\|_{L^2(\Omega\times\R^d)} + \|g\|_{L^2(\Omega\times\R^d)}) r^{\theta},
\end{equation}
with $\tau_y\varrho = \varrho(\cdot-y)$, and where the constant $\kappa$ depends on $\Vert\divv_x(\cv)\Vert_{L^{\infty}}$, $M$, $C$, $\alpha$, $\theta$ and $d$, and is independent on $f$ and $g$.
\end{theorem}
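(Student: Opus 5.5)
We work with the representation of $f$ by characteristics and iterate the regularizing operator $d$ times. Let $X_t(x,\omega)$ denote the flow of $-\cv(\cdot,\omega)$, i.e. $\partial_t X_t=-\cv(X_t)$, $X_0=x$. Equation \eqref{f plus agrad f} with $f\in L^2$ gives
\[
f(x)=\int_0^\infty e^{-t}g(X_t(x))\,\dt=:R_\omega g(x).
\]
The bound $\|\divv_x\cv\|_{L^\infty}$ controls the Jacobian of $X_t$ by $e^{t\|\divv_x\cv\|_\infty}$. Hence $R_\omega$ is bounded on $L^2$ once we truncate the time integral or use the $e^{-t}$ weight; to avoid divergence we cut at time $T$ and keep the tail as an error $O(e^{-cT})$.

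\textbf{Step 1: iteration.} Write $f=g+\cv\cdot\nabla f$, i.e. $f$ is the stationary law of a run-and-tumble process. Taking expectations, $\varrho=\E[R_\omega g]$. Iterating the identity $f = R g$ with independent copies $\omega_1,\dots,\omega_d$ amounts to writing $g=f-\cv\cdot\nabla f$ and expanding. Concretely we introduce $\PP^{\otimes d}$ and consider
\[
\varrho(x)\approx \E\int_{[0,T]^d} e^{-(t_1+\dots+t_d)}\,h\bigl(\Phi(t_1,\dots,t_d;x)\bigr)\,\di t,
\qquad \Phi(\ttt;x)=X^{\omega_d}_{t_d}\circ\cdots\circ X^{\omega_1}_{t_1}(x),
\]
where $h$ is built from $f$ and $g$. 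The point is that $\ttt\mapsto\Phi(\ttt;x)$ is a map $\R^d\to\R^d$. Near $\ttt=0$ its differential has columns $\approx -\cv^{\omega_k}(x)$, $k=1,\dots,d$. By \eqref{nd-a} applied successively to directions orthogonal to the span of the previous vectors, $|\det D_\ttt\Phi(0)|\ge \eps^d$ outside a set of probability $O(d\,C\eps^\alpha)$.

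\textbf{Step 2: local inversion.} On the good event, a quantitative local inversion theorem using the $C^2$ bound $M$ shows the following. For $|\ttt|\le \eta\sim\eps^{d}$ (a power of $\eps$), $\Phi(\cdot;x)$ is a diffeomorphism onto its image, with Jacobian comparable to $\det D\Phi(0)$. Changing variables $z=\Phi(\ttt;x)$ turns the $\ttt$-integral into $\int h(z)k(x,z)\,dz$, with a kernel $k$ of size $\lesssim \eps^{-d}$ whose $x$-Lipschitz constant is $\lesssim \eps^{-c d}\eta^{-1}$. This step is the main obstacle. The kernel is localized only to the small ball $|\ttt|\le\eta$, so we must smoothly cut off in $\ttt$. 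That produces derivatives of the cutoff of order $\eta^{-1}$, and we must track how the domain and Jacobian of the inverse vary with $x$, using only elementary estimates with $C^2_b$ norms.

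\textbf{Step 3: splitting and optimization.} Decompose $\varrho=\varrho_{\rm good}+\varrho_{\rm bad}+\varrho_{\rm far}$, where:
\begin{itemize}
\item $\varrho_{\rm good}$ is the localized part on the good event;
\item $\varrho_{\rm bad}$ is the bad event, with $L^2$ norm $\lesssim \eps^{\alpha/2}$ by Cauchy--Schwarz in $\omega$ (the factor $2/\max(p,p')$ comes from this H\"older step);
\item $\varrho_{\rm far}$ is the part $|\ttt|\ge\eta$, which is written back through the equation as a smoother or small remainder. Alternatively, the cutoff is applied as a partition with weights $e^{-t}$ smoothing at scale $\eta$, contributing $\lesssim\eta^{1/2}$-type losses in each of the $d$ variables.
\end{itemize}
For the good part, $\|\tau_y\varrho_{\rm good}-\varrho_{\rm good}\|_{L^2}\lesssim |y|\,\eps^{-Kd}(\|f\|+\|g\|)$ by Schur's test. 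Balancing $r\eps^{-Kd}$ against $\eps^{\alpha/2}$ gives $r^{\theta}$ with $\theta$ of the form $\frac{1}{4d}\frac{1}{1+2d/\alpha}$, up to the arbitrarily small loss absorbed by choosing $T\sim|\log r|$. This gives any $\theta<\delta(2,\alpha,d)$. Here $r_0$ is chosen so that $\eps<1$ and the local inversion radius is admissible.
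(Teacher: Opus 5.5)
\textbf{There is a genuine gap at the start of your Step~1.} Your $d$-fold formula $\varrho(x)\approx\E\int_{[0,T]^d}e^{-(t_1+\cdots+t_d)}h(\Phi(\ttt;x))\,\mathrm d\ttt$, with $d$ \emph{independent} copies $\omega_1,\ldots,\omega_d$, cannot be derived from the equation. For fixed $\omega$ the equation only links $f(\omega,\cdot)$, $g(\omega,\cdot)$ and the single field $\cv(\omega,\cdot)$. Since $f$ and $g$ are correlated with $\omega$, substituting $g=f-\cv\cdot\nabla_xf$ into $f=R_\omega g$ only returns tautologies, never a composition of flows driven by independent fields.

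The missing idea is to create the independence on the dual side. The paper writes $\langle\varrho,\tilde w\rangle_{L^2_x}=\E[\langle h,T\tilde w\rangle_{L^2_x}]$, where $\tilde w=w-\tau_yw$ and $T$ is the $\omega$-wise adjoint of the resolvent. Cauchy--Schwarz in $(\omega,x)$ then gives the bound $\|h\|_{L^2_{\omega,x}}\langle\Theta\tilde w,\tilde w\rangle^{1/2}$. Here $\Theta=T^*T$ is a \emph{deterministic} self-adjoint operator on $L^2_x$; this is where the run-and-tumble chain actually lives. The paper then uses the interpolation $\|\Theta\tilde w\|_{L^2_x}\le(2\|w\|_{L^2_x})^{1-1/d}\|\Theta^d\tilde w\|_{L^2_x}^{1/d}$. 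Only $\Theta^d$, applied to the increment of the test function, involves $d$ independent flows $H(\ttt;x)=\Phi^{(d)}_{t_d}\circ\cdots\circ\Phi^{(1)}_{t_1}(x)$.

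These two steps produce the factor $\frac1{2d}$ in $\delta(2,\alpha,d)=\frac1{4d}\frac{\alpha}{2d+\alpha}$. The remaining $\frac12$ is the loss $\eps^{\alpha/2}$ from Cauchy--Schwarz on the bad set, and $\frac{\alpha}{2d+\alpha}$ comes from the constraint $\eps^{2d+\alpha}\gtrsim r$ required by the change of variables. Your balancing of $r\eps^{-Kd}$ against $\eps^{\alpha/2}$ does not yield this exponent. The factor $2/\max(p,p')$ plays no role at $p=2$: it equals $1$ there and only enters through Riesz--Thorin in Corollary~\ref{cor:avlem}.

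\textbf{Two further steps would fail even with a $d$-fold representation.}

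First, restricting to $|\ttt|\le\eta$ with $\eta$ a power of $\eps$ discards almost all the mass of $e^{-|\ttt|_{\ell^1}}\mathrm d\ttt$. So $\varrho_{\mathrm{far}}$ is the main term, not a remainder, and your sketch gives no control of it. The paper instead requires non-degeneracy at \emph{every} $\ttt\in B(0,\ray)$, with $\ray\sim|\ln r|$, along the chain: $\dist(\cv^{(i)}(x_{t_{i-1}}),T_{t_{i-1}})>\eps$ for $i=1,\ldots,d$. The point $x_{t_{i-1}}$ and the tangent space $T_{t_{i-1}}$ of the image manifold depend only on the first $i-1$ copies. Hence independence and \eqref{nd-a-dist} bound the bad set by $C\eps^\alpha$ for each fixed $\ttt$. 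Instead of the globally non-injective substitution $z=\Phi(\ttt;x)$, the paper uses the shift $\ttt\mapsto\sss(\ttt;y)$ solving $H(\sss)=H(\ttt)+y$. This shift is built from a quantitative inverse function theorem and a perturbation lemma for the non-degeneracy condition, and satisfies $|\det\mathrm d\sss-1|\lesssim\eps^{-2d}e^{CM\ray}|y|$.

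Second, the formula $f=R_\omega g$ is only available when $\lambda:=\|\divv_x\cv\|_{L^\infty}<2$, and it uses the flow of $+\cv$, not $-\cv$. Otherwise the tail $e^{-T}f\circ\Phi_T$ is only bounded by $e^{(\lambda/2-1)T}\|f\|_{L^2_x}$, which need not decay, so truncating at time $T$ does not help. The paper rescales the equation as $f-\Lambda^{-1}\cv\cdot\nabla_xf=h_\Lambda:=\Lambda^{-1}g+(1-\Lambda^{-1})f$ with $\Lambda$ large. This does three things:
\begin{itemize}
\item it gives the resolvent formula;
\item it explains the term $\|f\|_{L^2}+\|g\|_{L^2}$ in the statement;
\item it makes $M$ small, so the growth $e^{CM\ray}=r^{-O(M)}$ costs only an arbitrarily small power of $r$.
\end{itemize}
This smallness of $M$, not the choice $T\sim|\log r|$, is the source of the arbitrarily small loss in $\theta$.
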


The restriction to $p=2$ in Theorem \eqref{th:AVLemmaII} is due to the lack of an inversion formula for the equation \eqref{f plus agrad f}. However, assuming the following extra assumption on the divergence of the vector field $\cv$
\begin{equation}\label{divergence smallness}
	\|\divv_x(\cv)\|_{L^{\infty}(\R^d)} \leq \lambda <   c(p) := 
	\begin{cases}
   p, & \text{when $p \geq 2$}, \\
	1, & \text{when $p<2$}.
	\end{cases}
\end{equation}
we can improve the estimate \eqref{AV-lemII} and also establish $L^p$-averaging lemma for the function $f$.

\begin{corollary}\label{cor:avlem}
Assume that the smallness condition \eqref{divergence smallness} holds. Then, there exists a positive constant $r_0=r_0(d,\alpha)>0$, such that, for any $\theta < \delta(p,\alpha,d)$, the average $\varrho:=\E\left[f\right]$ satisfies the estimate 
\begin{equation}\label{AV-lemIII}
	\forall r\in(0,r_0], \quad\sup_{|y| \le r}\|\tau_y\varrho-\varrho\|_{L^p(\R^d)}\leq \kappa \|g\|_{L^p(\Omega\times\R^d)} r^{\theta},
\end{equation}
with $\tau_y\varrho = \varrho(\cdot-y)$, and where the constant $\kappa$ depends on $\Vert\divv_x(\cv)\Vert_{L^{\infty}}$, $M$, $C$, $\alpha$, $p$, $\theta$ and $d$, and is independent on $f$ and $g$.
\end{corollary}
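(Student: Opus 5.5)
The plan is to reduce Corollary~\ref{cor:avlem} to the mechanism behind Theorem~\ref{th:AVLemmaII}. The divergence condition \eqref{divergence smallness} gives an explicit inversion formula and $L^p$ bounds for \eqref{f plus agrad f}. Write $\Phi_t(\cdot)=\Phi_t^\omega(\cdot)$ for the flow of $-\cv$, that is $\dot X=-\cv(X)$. Along characteristics the equation becomes $f-\frac{d}{dt}f(\Phi_{-t}x)$-type relations, and the bounded solution is
\[
f(x)=\int_0^{\infty}e^{-t}g(\Phi_t(x))\,\dt .
\]
The first step is to justify this representation and prove $\|f\|_{L^p(\Omega\times\R^d)}\le C\|g\|_{L^p(\Omega\times\R^d)}$.

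For the bound, use Liouville's formula: the Jacobian of $\Phi_t$ is bounded below by $e^{-\lambda t}$, so $\|g\circ\Phi_t\|_{L^p}\le e^{\lambda t/p}\|g\|_{L^p}$. Minkowski's inequality then gives the bound for $\lambda<p$. For $p<2$, uniqueness in $L^p$ of distributional solutions of \eqref{f plus agrad f} is needed, which I would get by duality: test against the adjoint equation $h+\divv_x(\cv h)=\phi$, whose solution lies in $L^{p'}$ when $\lambda<p'$. Since $p'>2>1$ for $p<2$, the threshold $c(p)=1$ covers this side too. This is where the case distinction in $c(p)$ enters, and it also yields the constant $\kappa$ depending only on $\|g\|_{L^p}$ (the $\|f\|$ term in \eqref{AV-lemII} is absorbed).

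The second step is to iterate the representation $d$ times using independent copies $\cv_1,\dots,\cv_d$ of the random field, in the run-and-tumble spirit of Remark~\ref{rem:run-and-tumble}. This gives
\[
\varrho(x)=\E\Big[\int_{\R_+^d}e^{-(t_1+\dots+t_d)}\,\E[f\mid\text{last}]\big(\Phi^{d}_{t_d}\circ\cdots\circ\Phi^{1}_{t_1}(x)\big)\,\di\ttt\Big],
\]
up to the remaining term $g$ contributions. Equivalently, $\varrho=\E[g]$-type averages composed with the map $\ttt\mapsto X(x,\ttt)$. The key structural fact is that for small $\ttt$ this map is a local diffeomorphism in $\ttt$. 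Its differential at $\ttt=0$ is approximately the matrix $(\cv_1(x),\dots,\cv_d(x))$, and \eqref{nd-a}, applied iteratively over the independent directions, shows that its determinant is $<\eps$ with probability $\lesssim\eps^{\alpha}$ (up to losses of order $d$).

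The third step handles the translation. To estimate $\tau_y\varrho-\varrho$, change variables $\ttt\mapsto\sss$ with $X(x,\sss)=X(x,\ttt)+y$. This is possible on the set where $|\det D_\ttt X|\ge\eps$ and $|\ttt|\le\eta$, with $|\sss-\ttt|\lesssim r/\eps$, provided the quantitative local inversion theorem applies, which needs $r\lesssim \eps^2\eta$ using the $C^2_b$ bound $M$. The shift then only affects the weight $e^{-\sum t_i}$ and the Jacobian, costing $O(r/\eps^{2})$, all in $L^p$ by the flow bounds above. The bad events contribute:
\begin{itemize}
\item $|\det|<\eps$: a contribution $\eps^{\alpha/d}$-ish after Hölder, where the factor $2/\max(p,p')$ arises from interpolating between an $L^2$ gain and the trivial $L^1/L^\infty$ bounds;
\item $|\ttt|>\eta$: a contribution handled through the representation, but one that does not decay in $\eta$. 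I therefore expect to need the paper's iteration of the regularizing operator here, writing $f$ via $(1-\cv\cdot\nabla)^{-1}$ repeatedly and working at small time scales. Rescaling $\ttt$ by $\eta$, that is considering $f-\eta\,\cv\cdot\nabla f$, plays the role of the localization.
\end{itemize}
Optimizing $\eps,\eta$ against $r$ produces the exponent $\frac1{4d}\frac{1}{1+2d/\alpha}$.

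The main obstacle is this quantitative inversion step. It requires uniform control of the inverse-function neighborhood in terms of $\eps$ and $M$, and measurable selection of the inverse. I would try first a Newton-type fixed-point argument with explicit constants, restricted to a box of size $\eps^2/M$.
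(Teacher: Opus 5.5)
\textbf{Gap: Step 2 has no justification, and the large-$\ttt$ region is left open.} Your proposal never actually uses Theorem~\ref{th:AVLemmaII}. Instead it tries to rebuild that theorem's mechanism directly in $L^p$, and the reconstruction breaks at Step~2. The average $\varrho=\E[f]$ cannot be written, from the resolvent representation, as an average over compositions $\Phi^{(d)}_{t_d}\circ\dots\circ\Phi^{(1)}_{t_1}$ of flows of $d$ independent copies of $\cv$. The reason is that $f(\omega,\cdot)=\int_0^\infty e^{-t}g(\omega,\Phi^{\omega}_t(\cdot))\,\dt$ involves a single realization of the field, and $g$ is not itself a resolvent average, so there is nothing to iterate. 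Your displayed formula, with ``$\E[f\mid\text{last}]$'' and unspecified ``$g$ contributions'', has no precise meaning.

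In the paper the independent copies appear only after duality and a $TT^*$ argument. One first bounds $|\langle\varrho-\tau_{-y}\varrho,w\rangle|\le\sqrt2\,\|h\|\,\|w\|^{1/2}\|\Theta(w-\tau_yw)\|^{1/2}$ with $\Theta=T^*T$. Self-adjointness of $\Theta$ on $L^2$ then gives $\|\Theta\tilde w\|\le(2\|w\|)^{1-1/d}\|\Theta^d\tilde w\|^{1/d}$. Each factor $\Theta$ carries its own expectation, and that is where the $d$ independent fields come from. This step is intrinsically Hilbertian and has no $L^p$ counterpart.

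Your second bullet also leaves the region of non-small $\ttt$ open. The paper never localizes near $\ttt=0$. It works on $|\ttt|<\ray$ with $\ray\sim|\ln r|$ and imposes the chain condition $(\mathrm{ND}_{\eps,x_0,\ttt})$ relative to the tangent spaces $T_{t_{i-1}}$; by independence, the bad set has expected $\mu$-measure $\lesssim\eps^\alpha$. It pays $\ray^{d-1}e^{-\ray}$ for the tail, and it rescales $\cv\to\cv/\Lambda$ so that the losses $e^{8dM\ray}$ cost only an arbitrarily small part of the exponent.

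\textbf{What the corollary actually needs.} It requires far less, and you already hold both ingredients: the $L^p$ bound of your Step~1 and the interpolation idea in your first bullet. Take Theorem~\ref{th:AVLemmaII} as given and proceed as follows.
\begin{enumerate}
\item By Proposition~\ref{prop:weaksol}, under the divergence bound the equation has a unique $L^q$ solution with $\|f\|_{L^q_x}\le\frac{q}{q-\lambda}\|g\|_{L^q_x}$. Hence $T_y\colon g\mapsto\tau_y\varrho-\varrho$ is a well-defined linear operator.
\item At $q=2$ this removes $\|f\|$ from \eqref{AV-lemII}, giving $\|T_yg\|_{L^2}\le\kappa r^{\theta}\|g\|_{L^2_{\omega,x}}$.
\item By Jensen, $\|T_yg\|_{L^q}\le\frac{2q}{q-\lambda}\|g\|_{L^q_{\omega,x}}$, taken at $q=\infty$ if $p>2$, or at $q=1$ if $p<2$.
\item Riesz--Thorin then gives the gain $\frac{2}{\max(p,p')}\theta$, i.e.\ every exponent below $\delta(p,\alpha,d)$.
\end{enumerate}

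This also corrects your reading of $c(p)$. The value $1$ for $p<2$ is exactly what the $L^1$ endpoint needs, not a consequence of a duality argument in $L^{p'}$. Strictly speaking, the $L^2$ endpoint needs $\lambda<2$, which for $p>2$ is not implied by $\lambda<p$; the paper's argument tacitly uses this as well. Finally, the representation formula uses the flow of $+\cv$, not of $-\cv$.
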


Some remarks regarding Theorem \ref{th:AVLemmaII} are now in order. 

\medskip

\begin{remark}
While proving Theorem \ref{th:AVLemmaII}, we will use the following equivalent form of \eqref{nd-a}: for all vector space $V\subset\R^d$ of dimension $<d$, for all $x\in\R^d$ and for all $\varepsilon\in(0,1)$,
\begin{equation}\label{nd-a-dist}
	\PP(\dist(\cv(x),V)<\eps)\leq C\eps^\alpha,
\end{equation}
where 
\[
	\dist(\cv,V)=\inf_{b\in V}|\cv-b|.
\]
This non-degeneracy hypothesis \eqref{nd-a} on the vector field $\cv$ is a quantitative version of that of \cite[condition (2.12)]{Gerard91}, or \cite[condition (11)]{LazarMitrovic2012} modulo the fact that Condition~(11) in \cite{LazarMitrovic2012} can be only a.e. in $x$. It also corresponds to Condition (2.1) in \cite{GolseLionsPerthameSentis1988} in the case where $\cv$ does not depend on the variable $x\in\mathbb R^d$.
\end{remark}

\medskip

\begin{remark}
Under the assumption \eqref{divergence smallness}, an immediate consequence of \eqref{AV-lemIII} is the following regularity estimates for the function $\varrho$ in the Sobolev spaces $ W^{\theta,p}(\mathbb R^d)$ for every $0<\theta<\delta(p,\alpha,d)$,
\[
    \Vert\varrho\Vert_{W^{\theta,p}(\R^d)}\leq \kappa \|g\|_{L^p(\Omega\times\R^d)},
\]
where the constant $\kappa$ may be different from the constant $\kappa$ in \eqref{AV-lemIII}, but has the same dependence with respect to the various parameters. When $p=2$, the exponent $\delta(2,\alpha,d)$ is far below from the expected $\alpha/2$ order, which is the gain of derivatives obtained in the case where $\cv$ does not depend on the space variable $x\in\mathbb R^d$ (see Proposition \ref{a inde} in Appendix \ref{sec:aconstant}). In the case $\alpha=1$ and $p=2$, the optimal order $1/2$ is obtained in \cite{GerardGolse92}. An estimate on $\varrho$ in the Sobolev space $W^{\beta,q}_{\mathrm{loc}}(\mathbb R^d)$ is established in \cite{ErcegKarlsenMitrovic25}. The exponents $\beta$ and $q$ satisfy $\beta\in(0,1)$, $q\in (1,p)$, but are not more explicit in \cite{ErcegKarlsenMitrovic25}, contrary to the result we obtain (the value $\delta(p,\alpha,d)$ being given by \eqref{value delta}). However, a more general right-hand side can be considered in \cite{ErcegKarlsenMitrovic25} (with potential application, via the kinetic formulation, to regularity results for solutions to non-linear scalar conservation laws, as in \cite{PerthameSouganidis98}). Also, we use a $C^2$-spatial regularity on the vector-field, while \cite{ErcegKarlsenMitrovic25} (see Remark~3.3 in \cite{ErcegKarlsenMitrovic25}) is less demanding.
\end{remark}

\medskip

\begin{remark}
There is an equivalence between the equation in divergence form
\begin{equation}\label{div form}
    f-\divv_x (\cv f)=g,
\end{equation}
and the transport equation with modified right-hand side
\begin{equation}\label{div form as transport form}
    f-\cv(x)\cdot\nabla_x f=g+ \divv(\cv)f,
\end{equation}
both equations \eqref{div form} and \eqref{div form as transport form} being understood almost surely, in $\mathcal{D}'(\R^d)$. Therefore, our strategy of proof of Theorem \ref{th:AVLemmaII} would also lead to the estimate \eqref{AV-lemII} for the solution $f$ of the equation \eqref{div form}.
\end{remark}

\paragraph{Notations} The following notations and conventions will be used throughout this work:
\begin{enumerate}[label={$\cdot$},leftmargin=* ,parsep=2pt,itemsep=0pt,topsep=2pt]
	\item Given a vector $\ttt = (t_1,\ldots,t_d)\in\mathbb R^d$ and $i\in\{1,\ldots,d\}$, we set $\ttt_i = (t_1,\ldots,t_i)\in \mathbb R^i$.
	\item On the space $\mathbb R^d$, the canonical Euclidean norm is denoted $\vert\cdot\vert$, while the notation $\vert\cdot\vert_{\ell^1}$ stands for the  $\ell^1$-norm defined by
	\[
		\vert x\vert_{\ell^1} = \vert x_1\vert+\cdots+\vert x_d\vert,\quad x = (x_1,\ldots,x_d)\in\mathbb R^d.
	\]
	\item Given $R>0$, the open ball centered at $0$ with radius $R$ in $(\mathbb R^d,\vert\cdot\vert)$ is denoted $B(0,R)$.
	\item For every integers $m,n\geq1$ and every matrix $A\in M_{m,n}(\mathbb R)$, the transpose of $A$ is denoted $A^T$. In the case $m=n$, the determinant of $A$ is denoted $\det(A)$ or $\det A$ to alleviate the notation.
	\item The subordinate norm of every multilinear form on a normed vector space $(E,\Vert\cdot\Vert)$ is also denoted $\Vert\cdot\Vert$.
	\item Given a measure space $(X,\mathcal F,\mu)$, the standard inner product on the space $L^2(X)$ is denoted $\langle\cdot,\cdot\rangle_{L^2}$, while $\Vert\cdot\Vert_{L^2}$ stands for the associated norm. The other standard $L^p$-norms are denoted $\Vert\cdot\Vert_{L^p}$. Sometimes, we also additionally indicate in index the current variable of $X$, for instance $L^p_x$ stands for $L^p(\R^d)$ while $L^p_{\omega,x}$ stands for $L^p(\Omega\times\R^d)$.
	\item In most of the proofs, the letter $C$ stands for a positive constant that can change for a line to another. The dependance with respect to some parameter, as the dimension $d$ or the vector field $\cv$ for example, is denoted $C(d)$, $C(\cv)$ or $C(d,\cv)$ depending on the context.
\end{enumerate}

\section{Proof of the averaging lemma}

The aim of this section is to prove Theorem \ref{th:AVLemmaII}.

\subsection{Vector field with small divergence}


First of all, we assume that Theorem \ref{th:AVLemmaII} has been proven, and we explain how Corollary~\ref{cor:avlem} can be then deduced. When the condition \eqref{divergence smallness} holds, it follows from Proposition \ref{prop:weaksol} that the function $f$ satisfies the following bound
\begin{equation}\label{eq:contract}
	\Vert f\Vert_{L^p_x} \le \frac{p}{p-\lambda}\Vert g\Vert_{L^p_x}. 
\end{equation}
In the case where $p=2$, this has for consequence to improve the estimate \eqref{AV-lemII} as follows
\begin{equation}\label{eq:improveavlem}
    \forall r\in(0,r_0], \quad\sup_{|y| \le r}\|\tau_y\varrho-\varrho\|_{L^2_x}\leq \kappa \|g\|_{L^2_{\omega,x}} r^{\theta}.
\end{equation}
Moreover, by Jensen's inequality, we also deduce from \eqref{eq:contract} that
\begin{equation}\label{averaged contraction Lp}
	\Vert\varrho\Vert_{L^p_x} \leq \frac{p}{p-\lambda}\Vert g\Vert_{L^p_{\omega,x}},
\end{equation}
which implies the following inequality for every $y\in\R^d$ by the triangle inequality
\begin{equation}\label{averaged contraction Lp omega}
	\|\tau_y\varrho-\varrho\|_{L^p_x} \leq \frac{2p}{p-\lambda}\Vert g\Vert_{L^p_{\omega,x}}.
\end{equation}
We apply now the Riesz-Thorin theorem to the linear operators $T_y:g\mapsto \tau_y\varrho-\varrho$. Notice that these functions are well-defined since we know by Proposition \ref{prop:weaksol} that when the smallness condition \eqref{divergence smallness} holds, the stationary transport equation \eqref{f plus agrad f} has a unique solution. If $p > 2$, we interpolate \eqref{eq:improveavlem} with \eqref{averaged contraction Lp omega} for $p=\infty$; if $p < 2$, we interpolate \eqref{eq:improveavlem} with \eqref{averaged contraction Lp omega} for $p=1$, and obtain \eqref{AV-lemIII} for every $\theta<\delta(p,\alpha,d)$, with 
\[
	\delta(p,\alpha,d)=\frac{2}{\max(p,p')}\delta(2,\alpha,d).
\]

\subsection{Some reductions}\label{subsec:some reductions}

\paragraph{Resolvent formula.} Let $f,g\in L^2(\Omega\times\mathbb R^d)$. Assume that $f$ is a weak solution of the equation \eqref{f plus agrad f}. The study of such weak solutions for the equation \eqref{f plus agrad f} is performed in Appendix \ref{sec:stationary transport}. In particular, we know how to invert the equation \eqref{f plus agrad f} and get a formula relating the functions $f$ and $g$ only under the extra assumption \eqref{divergence smallness} on the divergence of the vector field $\cv$. In order to reduce our problem to this situation, we introduce a positive constant $\Lambda>0$ and rewrite the equation \eqref{f plus agrad f} as follows
\[
    f-\frac{1}{\Lambda}\cv(x)\cdot\nabla_xf = \frac{1}{\Lambda}g +\bigg(1-\frac{1}{\Lambda}\bigg)f.
\]
When $\Lambda>\lambda/c(p)$, where the constant $c(p)>0$ is defined in \eqref{divergence smallness}, it follows from Proposition \ref{prop:weaksol} that the equation
\begin{equation}\label{eq:modified eq}
    f_{\Lambda}-\cv_{\Lambda}(x)\cdot\nabla_xf_{\Lambda} = h_{\Lambda},
\end{equation}
where we set
\begin{equation}\label{eq:functionh}
    \cv_{\Lambda} = \frac1{\Lambda}a\quad\text{and}\quad h_{\Lambda} = \frac{1}{\Lambda}g +\bigg(1-\frac{1}{\Lambda}\bigg)f,
\end{equation}
has a unique weak solution $f_{\Lambda}\in L^2(\mathbb R^d)$, given almost surely by the resolvent formula \eqref{eq:resolventestimate}. Since the function $f$ is solution of the equation \eqref{eq:modified eq} by assumption, it follows that $f = f_{\Lambda}$ and that $f$ is explicitly given by 
\begin{equation}\label{explicit formula f plus agrad f}
	f = \int_0^\infty e^{-t}h_{\Lambda}\circ\Phi^{\Lambda}_t\,\mathrm dt,
\end{equation}
where $(\Phi_t^{\Lambda})_{t\in\mathbb R}$ is the flow generated by the vector field $\cv_{\Lambda}$.


\paragraph{Reduction to the case $M < 1/(2+d)$.} The transformation \eqref{eq:modified eq}, will be used not only to justify the reduction to a vector field with divergence satisfying \eqref{divergence smallness} and to get the formula \eqref{explicit formula f plus agrad f}. Indeed, while proving Theorem \ref{th:AVLemmaII}, we will need to assume that $\Lambda$ is sufficiently large compared to $M$ (where the quantity $M$ introduced in \eqref{definition norme} gives the almost sure bound on the $C^2$-norm of the vector field $\cv$), in particular to ensure that the condition
\begin{equation}\label{reduction M lambda}
    \frac{\lambda}{\Lambda}\leq\frac{M}{\Lambda} < \frac1{2+d} \ \bigg(\!<\frac13\bigg)
\end{equation}
is satisfied. The control on the size of $M/\Lambda$ will also be invoked at the very end of the proof, see \eqref{theta VS bar theta}.
In the following, though, in order to avoid too heavy formulas, we will not specify the dependence of the function $h_{\Lambda}$ and of the vector field $\cv_{\Lambda}$ with respect to the parameter $\Lambda$. Moreover, at the price of a huge abuse of notation, we will keep using the notations $\lambda$ and $M$ to denote the rescaled quantities $\lambda/\Lambda$ and $M/\Lambda$, respectively.

Note well that this trick, which consists in replacing $a$ by $a_{\Lambda}$, modifies the value of the constant $C$ in the non-degeneracy condition \eqref{nd-a}, $C$ being increased to $C\Lambda^\alpha$.

Let us finally mention that, in the following, we will widely use the fact that $M\le 1$ without stating it explicitly each time this is the case.

\subsection{Duality and \texorpdfstring{$T T^*$}{} argument}

\paragraph{Duality.} Recall that $\tau_y\varrho(x)=\varrho(x-y)$. Our aim is to estimate 
\begin{equation*}
	\langle \tau_{-y}\varrho-\varrho,w\rangle_{L^2_x},
\end{equation*}
where $w$ is an arbitrary element of $L^2(\R^d)$. We expand, using \eqref{explicit formula f plus agrad f}
\begin{equation} \label{duality modulus expand}
	\dual{\varrho-\tau_{-y}\varrho}{w}_{L^2_x}=\dual{\varrho}{w-\tau_y w}_{L^2_x}=\E\left[\dual{f}{w-\tau_y w}_{L^2_x}\right]=\E\left[\dual{h}{ T \tilde{w}}_{L^2_x}\right],
\end{equation}
with
\[
	\tilde{w} :=w-\tau_y w, \quad T w  :=\int_0^\infty e^{-t}(w\circ\Phi_{-t})\, J\Phi_{-t}\, \dt,
\]
where we used a change of variables with the formula \eqref{determinantII}. From \eqref{duality modulus expand} and the Cauchy-Schwarz inequality, it follows that
\[
	\vert\dual{\varrho-\tau_{-y}\varrho}{w}_{L^2_x}\vert \leq \|h\|_{L^2_{\omega,x}} \| T\tilde{w}\|_{L^2_{\omega,x}},
\]
and since 
\[
    \|T  \tilde{w}\|_{L^2_{\omega,x}}^2=\E\left[ \duall{T  \tilde{w}}{T  \tilde{w}}_{L^2_x} \right]=\dual{\Theta\tilde{w}}{\tilde{w}}_{L^2_x},
\]
where
\begin{equation}\label{operator Theta}
	\Theta w := T^*T w,
\end{equation}
we obtain
\begin{equation}\label{eq:intereq}
	\vert\dual{\varrho-\tau_{-y}\varrho}{w}_{L^2_x}\vert
	\leq 2^{1/2}\|h\|_{L^2_{\omega,x}} \|w\|_{L^2_x}^{1/2} \|\Theta (w-\tau_y w)\|_{L^2_x}^{1/2}.
\end{equation}
Our goal is now to prove the estimate
\[
	\sup_{|y|\le r}\|\Theta (w-\tau_y w)\|_{L^2_x}\leq \frac12 \kappa^2 r^{2 \theta}  \|w\|_{L^2_x},
\]
where we set $ \theta =  \theta(2,\alpha,d, \nu)$ to simplify the writing. Using the formula \eqref{determinantII}, one can check that the adjoint of the operator $T :L^2(\mathbb R^d)\rightarrow L^2(\Omega\times\mathbb R^d)$ is given by
\[
    T ^* w^* = \int_0^{+\infty}e^{-t}\E[ w^*\circ\Phi_t]\, \dt,\quad w^*\in L^2(\Omega\times\mathbb R^d).
\]
Recalling the definition \eqref{operator Theta} of $\Theta$, we therefore deduce that
\begin{equation}\label{etape vers theta}
    \Theta w = \iint_{(0,+\infty)^2}e^{-(t+s)}\E[(w\circ\Phi_{t-s})\, (J\Phi_{-s}) \circ \Phi_t]\, \ds\, \dt.
\end{equation}
Performing the change of variables $t' = t-s$ and $s' = s$ then allows to obtain from \eqref{etape vers theta} that
\begin{equation}\label{def Theta}
    \Theta w = \int_{\R}e^{-|t|} \E[(w\circ\Phi_{t})E(t,\cdot)]\, \dt,
\end{equation}
where, denoting by $t^- := \max\{0,-t\}$ the negative part of $t$,
\begin{equation}\label{E def}
   E(t,\cdot) :=  \int_{t^-}^{+\infty} e^{-2(s-t^-)}\, (J\Phi_{-s}) \circ \Phi_{t+s}\, \ds.
\end{equation}
In view of \eqref{divergence smallness} and \eqref{determinantII}, for each $t\in \R$ and $x \in \R^d$, we have almost surely
\begin{equation}\label{E bound}
    \frac{e^{-\lambda t^- }}{2+\lambda} \leq 	E(t,x) \leq \frac{e^{\lambda t^-}}{2-\lambda}.
\end{equation}

\paragraph{Iteration.} The operator $\Theta\colon L^2(\R^d)\to L^2(\R^d)$ is selfadjoint. Introduce the quantity
\[
	\vartheta_k=\|\Theta^{k} \tilde w\|_{L^2_x},\quad \tilde{w}:=w-\tau_y w.
\]
We have the following interpolation bounds: for all $j\geq1$, and $k\in\{0,\ldots,j\}$,
\begin{equation}\label{estim theta level j}
	\vartheta_k\leq \vartheta_j^{k/j}\vartheta_0^{1-k/j}.
\end{equation}
This can be proved by recursion on $j$: for $j\geq1$, we can expand
\[
	\vartheta_j=\dual{\Theta^{j+1}\tilde{w}}{\Theta^{j-1}\tilde{w}}^{1/2}_{L^2_x}
	\leq\vartheta_{j+1}^{1/2}\vartheta_{j-1}^{1/2},
\]
and using \eqref{estim theta level j} for $k=j-1$ obtain 
\[
	\vartheta_j\leq \vartheta_{j+1}^{1/2}\vartheta_j^{1/2-1/2j}\vartheta_0^{1/2j}
	\Longrightarrow\vartheta_j\leq \vartheta_{j+1}^{j/(j+1)}\vartheta_0^{1/(j+1)}.
\]
We report the last estimate in \eqref{estim theta level j} to conclude to \eqref{estim theta level j} at level $j+1$. It follows that 
\begin{equation}\label{theta by higher powers}
	\|\Theta(w-\tau_y w)\|_{L^2_x}\leq (2\|w\|_{L^2_x})^{1-1/j}\|\Theta^j(w-\tau_y w)\|_{L^2_x}^{1/j}.
\end{equation}
We apply \eqref{theta by higher powers} with $j=d$ to reduce our problem to the proof of the following estimate:
\begin{equation}\label{key estim Theta 2}
	\sup_{|y|\le r}\|\Theta^d (w-\tau_y w)\|_{L^2_x}\leq  2^{1-2d} \kappa^{2d} r^{2d\theta}  \|w\|_{L^2_x}.
\end{equation}

\begin{remark}[Run-and-tumble]\label{rem:run-and-tumble}
Notice from \eqref{def Theta} that, if $\cv$ is divergence-free, \textit{i.e.} if $\divv(\cv) = 0$ almost surely, then  $E\equiv1/2$ and so $\Theta$ is simply given by
\[
	\Theta w =\int_\R\frac{e^{-|t|}}{2}\E[w\circ\Phi_t]\, \dt.
\]
Let $(X_n)_{n\geq1}$ be the discrete Markov process defined as follows (note that this is a particular instance of motion by run-and-tumble): knowing $X_n$, 
\begin{enumerate}
	\item draw a sign $\pm$ with equi-probability $1/2$,
	\item draw independently an exponential time variable $\tau\sim\mathcal{E}(1)$ and a vector field $\cv$,
	\item follow the integral curve of $\pm\cv$ until reaching the point $X_{n+1}$ at time $\tau$.
\end{enumerate}	
Then $\Theta$ is the transition operator associated to $X_n\to X_{n+1}$ and the iterate $\Theta^d w$ is thus
\[
	\Theta^d w(x)=\E_x\left[w(X_d)\right],
\]
where the average is done with respect to the successive choices of $\pm$, $\tau$, $\cv$ at steps $1,2,\ldots,d$.
\end{remark}

\paragraph{Estimates after $d$ iterations.} By iteration of \eqref{def Theta}, the expression of $\Theta^d$ is 
\begin{equation}\label{Theta circ d}
    \Theta^d w(x) = \int_{\R^d}e^{-|\ttt|_{\ell^1}} \E\bigg[w\big(\Phi^{(d)}_{t_d}\circ\dotsb\circ\Phi^{(1)}_{t_1}(x)\big) \prod_{i=1}^d E\big(t_i, \Phi^{(i-1)}_{t_{i-1}}\circ\dotsb\circ\Phi^{(1)}_{t_1} (x) \big)\bigg]\,\mathrm d\ttt,
\end{equation}
where  $\ttt=(t_1,\ldots,t_d)$, and $(\Phi^{(k)}_t)_{t\in\mathbb R}$ is the flow associated to a copy $\cv^{(k)}$ of $\cv$, the family $\{\cv^{(k)}: 1\leq k\leq d\}$ being independent. In \eqref{Theta circ d}, we use the convention that when $i=1$,
\[
	\Phi^{(i-1)}_{t_{i-1}}\circ\dotsb\circ\Phi^{(1)}_{t_1} = \mathrm{Id}.
\]
In order to simplify the writing, let us define
\begin{equation}\label{K definition}
	K(\ttt,x) := \prod_{i=1}^d E\big(t_i,\Phi^{(i-1)}_{t_{i-1}}\circ\dotsb\circ\Phi^{(1)}_{t_1}(x)\big).
\end{equation}
With this notations, using \eqref{Theta circ d}, we write
\begin{multline}\label{Theta circ d delta w}
	\Theta^d(w-\tau_y w)(x) = 
	\int_{\R^d} e^{-|\ttt|_{\ell^1}} \E\big[w\big(\Phi^{(d)}_{t_d}\circ\dotsb\circ\Phi^{(1)}_{t_1}(x)\big) K(\ttt,x)\big]\,\mathrm d\ttt \\
    -\int_{\R^d} e^{-|\ttt|_{\ell^1}} \E\big[w\big(\Phi^{(d)}_{t_d}\circ\dotsb\circ\Phi^{(1)}_{t_1}(x)-y\big) K(\ttt,x)\big]\,\mathrm d\ttt.
\end{multline} 
At this stage, we would like to do the change of variables 
\[
	\Phi^{(d)}_{s_d}\circ\dotsb\circ\Phi^{(1)}_{s_1}(x)-y=\Phi^{(d)}_{t_d}\circ\dotsb\circ\Phi^{(1)}_{t_1}(x)
\]
in the last integral of \eqref{Theta circ d delta w} to estimate the increment $\|\Theta^d(w-\tau_y w)\|_{L^2_x}$. This necessitates to consider the map
\[
	H\colon\R^d\to\R^d,\quad \ttt=(t_1,\dotsc,t_d)\mapsto\Phi^{(d)}_{t_d}\circ\dotsb\circ\Phi^{(1)}_{t_1}(x_0),
\]
for a fixed $x_0\in\R^d$, and to study  the following questions:
\begin{enumerate}
	\item\label{item:diff} On what condition is $H$ locally a $C^1$-diffeomorphism?
	\item\label{item:invdiff} If realized, what estimates can we obtain on the solution $\sss \in \R^d$ to the equation
	\begin{equation}\label{eq:Fparam}
		H(\sss)=H(\ttt)+y,
	\end{equation}
	for $\ttt\in\R^d$ given and $y\in\R^d$ sufficiently small?
\end{enumerate}

\subsection{A change of variable}

\paragraph{Inversion of the map $H$.} 
We first answer the question~\ref{item:diff}. We will use an iterative procedure, so, for $i\in\{1,\dotsc,d\}$, we will denote by $H^{(i)}$ the map
\begin{equation}\label{mapFparamyi}
	H^{(i)}\colon\R^i\to\R^d,\quad \ttt_i := (t_1,\dotsc,t_i)\mapsto\Phi^{(i)}_{t_i}\circ\dotsb\circ\Phi^{(1)}_{t_1}(x_0).
\end{equation}
We fix a value $\ttt=(t_1,\dotsc,t_d)\in\R^d$ of the parameters and set
\begin{equation}\label{chainpoints}
	x_{t_1}=\Phi^{(1)}_{t_1}(x_0),\quad x_{t_2}=\Phi^{(2)}_{t_2}(x_{t_1}),\quad \dotsc,\quad x_{t_d}=\Phi^{(d)}_{t_d}(x_{t_{d-1}}).
\end{equation}  
We seek sufficient conditions under which the maps $H^{(i)}$ are locally $C^1$-diffeomorphisms. To that end, we begin by establishing some identities that will be useful later. Let $i\in\{1,\dotsc,d\}$. Notice first that
\begin{equation}\label{pull-back simplification}
   \cv^{(i)}(x_{t_i})= \frac{\di\;}{\ds} \Big|_{s=0} \Phi^{(i)}_{s} (x_{t_i})
   = \frac{\di\;}{\ds} \Big|_{s=0} \Phi^{(i)}_{t_i} (\Phi^{(i)}_{s}(x_{t_{i-1}}))
   = \di \Phi^{(i)}_{t_i}(x_{t_{i-1}})\cdot\cv^{(i)}(x_{t_{i-1}}).
\end{equation}
Moreover, let us recall that the pull-back of the vector field $\cv^{(i)}$ by a diffeomorphism $\psi\colon\R^d\to\R^d$ is given by 
\[
	\psi^\ast\cv^{(i)}(y) = \di \psi(x)^{-1}\cdot\cv^{(i)}(y),
\]
where $x=\psi^{-1}(y)$. Therefore, \eqref{pull-back simplification} gives
\[
    (\Phi^{(i)}_{t_i})^\ast  \cv^{(i)}(x_{t_i})= (\di \Phi^{(i)}_{t_i}(x_{t_{i-1}}) )^{-1}\cdot\cv^{(i)}(x_{t_i})
    = \cv^{(i)}(x_{t_{i-1}}).
\]
The concept of pull-back plays no role in itself in the following, but allows to have a geometric interpretation of the recurrence that follows, see Figure \hyperref[figure]{1}.

We can now start the induction. In the base case $i=1$, $\partial_{t_1} H^{(1)}(t_1)=\cv^{(1)}(x_{t_1})$, so $H^{(1)}$ is a $C^1$-diffeomorphism in a neighborhood of $t_1$ if 
\[
	\cv^{(1)}(x_{t_1})\not=0,
\]
which is equivalent to
\begin{equation}\label{ConditionX10}
	\cv^{(1)}(x_0)\not=0,
\end{equation}
thanks to the identity \eqref{pull-back simplification}.
If \eqref{ConditionX10} is realized, then there is a $\delta_1>0$ such that the restriction
\[
	H^{(1)}\colon V_1\to N_1:=H^{(1)}(V_1),\quad V_1:=(t_1-\delta_1,t_1+\delta_1),
\]
is $C^1$-diffeomorphism, where $N_1$ is the submanifold of $\R^d$ of dimension $1$ described by the parametrization $s_1\mapsto\Phi^{(1)}_{s_1}(x_0)$. 

Let us now consider $2\le i\le d$ and assume that for every $j = 1,\ldots,i-1$, we have found some $\delta_j>0$ such that, for $V_j=(t_j-\delta_j,t_j+\delta_j)$, we have a $C^1$-diffeomorphism
\begin{equation}\label{Hi diffeomorphism}
	H^{(i-1)}\colon V_1\times\dotsb\times V_{i-1}\to N_{i-1}:=H^{(i-1)}(V_1\times\dotsb\times V_{i-1})
\end{equation}
onto the submanifold $N_{i-1}$ of dimension $i-1$. Then, we consider the map
\begin{equation}\label{mapGi}
	G_i\colon N_{i-1}\times\R\to\R^d,\quad (x,s)\mapsto\Phi^{(i)}_{s}(x).
\end{equation}
Note that by definition of the maps $H^{(i-1)}$, $H^{(i)}$ and $G_i$, we have
\[
	H^{(i)}(\ttt_i) = G_i(H^{(i-1)}(\ttt_{i-1}),t_i),
\]
which implies
\begin{equation}\label{dHi-dGi relation}
	\di H^{(i)}(\ttt_i) 
	= \di G_i(x_{t_{i-1}},t_i)
	\begin{pmatrix} 
		\di H^{(i-1)}(\ttt_{i-1})  & 0 \\[2pt]
		0 & 1 
	\end{pmatrix}.
\end{equation}
Moreover, the differential $\di G_i(x_{t_{i-1}},t_i)$ maps $T_{(x_{t_{i-1}},t_i)}(N_{i-1}\times\R)=T_{x_{t_{i-1}}}N_{i-1}\times\R$ onto a $i$-dimensional subspace of $\R^d$, and is given by
\begin{align}
	\di G_i(x_{t_{i-1}},t_i)\cdot(v,\sigma)
	& = \di\Phi^{(i)}_{t_i}(x_{t_{i-1}})\cdot v+\sigma \cv^{(i)}(x_{t_i}) \nonumber \\
	& = \di\Phi^{(i)}_{t_i}(x_{t_{i-1}})\cdot ( v+\sigma \cv^{(i)}(x_{t_{i-1}})), \label{DGi}
\end{align}
where we used \eqref{pull-back simplification}.
It is non-injective if, for some non-trivial $(v,\sigma)\in T_{x_{t_{i-1}}}N_{i-1}\times\R$, we have 
\begin{equation}\label{N2immersion-pullback2}
	v+\sigma \cv^{(i)}(x_{t_{i-1}})=0.
\end{equation}
The equation \eqref{N2immersion-pullback2} has no non-trivial solution if 
\begin{equation}\label{ConditionX2}
	\cv^{(i)}(x_{t_{i-1}})\notin T_{x_{t_{i-1}}}N_{i-1}.
\end{equation} 
If \eqref{ConditionX2} is realized, then, in view of \eqref{dHi-dGi relation} and the induction hypothesis \eqref{Hi diffeomorphism}, there is a $\delta_i>0$ so that, considering the neighborhood $V_i=(t_i-\delta_i,t_i+\delta_i)$ of $t_i$, the restriction
\[
	H^{(i)}\colon V_1\times\dotsb\times V_{i-1}\times V_i\to N_i:=H^{(i)}(V_1\times\dotsb\times V_{i-1}\times V_i)
\]
is a $C^1$-diffeomorphism onto the submanifold $N_i$ of dimension $i$.

		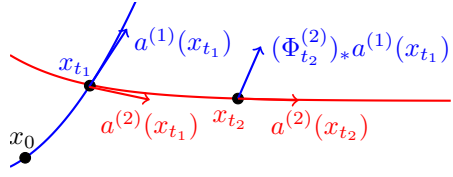
\begin{figure}\label{figure}
				\begin{center}
		\begin{tikzpicture}
			
			title={Intersection de deux courbes}, 
				title style={font=\bfseries\large, yshift=2ex},

			\clip (0.5,0.1) rectangle (7,2.3);
			
			\draw[blue, thick, domain=0.5:6, samples=100] 
			plot(\x, {\x*\x/2});
			
			\draw[red, thick, domain=0.5:6.3, samples=100] 
			plot(\x, {exp(-\x)+1});
			
			\filldraw[black] (1.55, {0.5*1.55*1.55}) circle (2pt)
			node[above left, blue, xshift=5pt] {$x_{t_1}$};
			
			\filldraw[black] (0.7, {0.5*0.7*0.7}) circle (2pt)
			node[above, xshift=-1pt] {$x_0$};
			
			\filldraw[black] (3.5, {1+exp(-3.5)}) circle (2pt)
			node[below, red, xshift=-3pt, yshift=-1pt] {$x_{t_2}$};
			
			\pgfmathsetmacro{\xone}{1.54}
			\pgfmathsetmacro{\yone}{0.5*\xone*\xone}
			\pgfmathsetmacro{\slope}{\xone} 
			
			\pgfmathsetmacro{\tvecx}{0.5}
			\pgfmathsetmacro{\tvecy}{0.5 * \slope}
			
			\draw[->,thick, blue] 
			(\xone,\yone) -- ({\xone+\tvecx},{\yone+\tvecy})
			node[midway, right, xshift=6pt, yshift=6pt] {\(\cv^{(1)}(x_{t_1})\)};
			
			\pgfmathsetmacro{\xtwo}{3.5}
			\pgfmathsetmacro{\ytwo}{1+exp(-\xtwo)}
			\pgfmathsetmacro{\slope}{1.5*\xone}
			\pgfmathsetmacro{\tvecx}{0.3}
			\pgfmathsetmacro{\tvecy}{0.3 * \slope}
			
			\draw[->,thick, blue] 
			(\xtwo,\ytwo) -- ({\xtwo+\tvecx},{\ytwo+\tvecy})
			node[right, xshift=0pt, yshift=0pt] {\((\Phi^{(2)}_{t_2})_*\cv^{(1)}(x_{t_1})\)};

			\pgfmathsetmacro{\xtwo}{3.5}
			\pgfmathsetmacro{\ytwo}{1+exp(-\xtwo)}
			\pgfmathsetmacro{\slope}{-exp(-\xtwo)}
			
			\pgfmathsetmacro{\tvecx}{0.8}
			\pgfmathsetmacro{\tvecy}{0.8 * \slope}
			
			\draw[->,thick, red] 
			(\xtwo,\ytwo) -- ({\xtwo+\tvecx},{\ytwo+\tvecy})
			node[below, xshift=8pt, yshift=0pt] {\(\cv^{(2)}(x_{t_2})\)};
			
			\pgfmathsetmacro{\xone}{1.54}
			\pgfmathsetmacro{\ytwo}{1+exp(-\xone)}
			\pgfmathsetmacro{\slope}{-exp(-\xone)}
			
			\pgfmathsetmacro{\tvecx}{0.8}
			\pgfmathsetmacro{\tvecy}{0.8 * \slope}
			
			\draw[->,thick, red] 
			(\xone,\yone) -- ({\xone+\tvecx},{\yone+\tvecy})
			node[below, xshift=0pt, yshift=0pt] {\(\cv^{(2)}(x_{t_1})\)};

		\end{tikzpicture}
			\end{center}
		\caption{The tangent plane to $N_2$ at $x_{t_2}$ is generated by the tangent vector $\cv^{(2)}(x_{t_2})$ and by the push-forward $(\Phi^{(2)}_{t_2})_{\ast}\cv^{(1)}(x_{t_1})$ of the tangent vector $\cv^{(1)}(x_{t_1})$. The pull-back of $T_{x_{t_2}}N_2$ by $\Phi^{(2)}_{t_2}$ is the plane generated by $\cv^{(2)}(x_{t_1})$ and $\cv^{(1)}(x_{t_1})$.}
		\end{figure}

Setting $N_0=\{x_0\}$, we conclude that, under the iterative conditions
\begin{equation}\label{ConditionX1d}
	\cv^{(1)}(x_0)\notin T_{x_0}N_0:=\{0\},\quad \cv^{(2)}(x_{t_1})\notin T_{x_{t_1}}N_1,\quad\dotsc,\quad \cv^{(d)}(x_{t_{d-1}})\notin T_{x_{t_{d-1}}}N_{d-1},
\end{equation}	
there is a neighborhood $V=\prod_{i=1}^d(t_i-\delta_i,t_i+\delta_i)$ of $\ttt$ such that $H\colon V\to H(V)$ is a $C^1$-diffeomorphism onto $H(V)$. It is crucial to notice that the tangent spaces $T_{x_{t_i}}N_i$ intrinsically depend on $x_0$ and $\ttt$, and do not depend on the choice of the neighborhoods $V_i$. In the following, those spaces will be denoted in the following way
\begin{equation}\label{eq:tangentspaces}
	T_{t_i} := T_{x_{t_i}}N_i,\quad i\in\{0,\ldots,d-1\},
\end{equation}
where we set $t_0 = 0$, in order to prevent any ambiguity.

\paragraph{Inversion of the map $H$: quantitative estimates.} Now, we tackle question \ref{item:invdiff} page \pageref{item:invdiff}. For that purpose, recall that we consider the space $\R^d$ with the Euclidean norm $|\cdot|$ and equip derivative matrices with the subordinate norm $\| \cdot \| $.
Assuming that \eqref{ConditionX1d} is satisfied, we seek some estimates on the solution $\sss$ to the equation 
\begin{equation}\label{from t to tau}
	H(\sss(\ttt;y))=H(\ttt)+y,
\end{equation}
for a fixed $\ttt\in\R^d$ and $y\in\R^d$ sufficiently small. Our intention is to use the change of variable $\ttt\mapsto\sss(\ttt;y)$, so we will need the two following estimates (see Proposition~\ref{prop:CVAR final} below):
\begin{equation}\label{estimates on tau1}
		|\sss-\ttt|\lesssim |y|
\end{equation}
and
\begin{equation}\label{estimates on tau2}
    \|\di\sss(\ttt)-\mathrm{Id}\|\lesssim |y|.
\end{equation}
Let us first consider the equation $H(\sss)=z$, $z$ being close to the reference value $z_*:=H(\ttt)$. To bound $|\sss-\ttt|$ in function of $|z-z_*|$, a first possible approach is to expand
\begin{align*}
    z-z_* = H(\sss)-H(\ttt)&=\int_0^1 \di H(\sss(\theta))\cdot(\sss-\ttt)\,\mathrm d\theta\nonumber\\
    & = \di H(\ttt)\cdot(\sss-\ttt)+\int_0^1 (\di H(\sss(\theta)) - \di H(\ttt))\cdot(\sss-\ttt)\,\mathrm d\theta,
\end{align*} 
where $\sss(\theta)=\theta\sss+(1-\theta)\ttt$, to get
\begin{equation}\label{preLip DH}
	\di H(\ttt)\cdot(\sss-\ttt)=z-z_*-\int_0^1 (\di H(\sss(\theta)) - \di H(\ttt))\cdot(\sss-\ttt)\,\mathrm d\theta.
\end{equation} 
This yields an estimate $|\sss-\ttt|\leq C|z-z_*|$, provided we have a bound on $\di H(\ttt)^{-1}$ and a Lipschitz estimate 
\begin{equation}\label{Lip DH}
    \|\di H(\ttt') - \di H(\ttt)\|\leq L_1|\ttt'-\ttt|,
\end{equation}
in a neighborhood of $\ttt$. We will need such a second order estimate \eqref{Lip DH} to establish the bound \eqref{estimates on tau2}, but, if we separate the question of uniqueness from the question of existence, the proof of the first bound in \eqref{estimates on tau1} necessitates only estimates on the first order quantity $\di H(\ttt)^{-1}$ (see Appendix~\ref{sec:local inversion}).

In order to use the change of variable \eqref{from t to tau}, we use Proposition~\ref{prop:from Ekeland} and Proposition~\ref{prop:from Ekeland Uniqueness} in Appendix~\ref{sec:local inversion}. For that purpose, we will need quantitative estimates on the function $H$, which requires to introduce a quantitative version of the condition \eqref{ConditionX1d} that ensures the local inversion of this function. We will use the one stated in the following definition.

\begin{definition}[Quantitative non-degeneracy] Given $x_0\in\mathbb R^d$ and $\varepsilon>0$, we say that the non-degeneracy condition holds from the point $x_0$ at some time $\ttt\in\mathbb R^d$ with precision $\varepsilon$ when
\begin{equation}\label{ConditionX1d eps}\tag{ND$_{\varepsilon,x_0,\ttt}$}
	\forall i\in\{1,\ldots,d\},\quad\dist(\cv^{(i)}(x_{t_{i-1}}),T_{t_{i-1}})>\eps,
\end{equation}
where we set $t_0 = 0$, where $x_{t_1},\ldots,x_{t_d}$ is the chain given by \eqref{chainpoints}, and where $T_{t_0},\ldots, T_{t_{d-1}}$ are the tangent spaces defined by \eqref{eq:tangentspaces}.
\end{definition}

The quantitative estimates we will need for the function $H$ and its derivatives are gathered in the following result.

\begin{proposition}[Size of $\di H(\ttt)$, $\di^2H(\ttt)$, $\di H(\ttt)^{-1}$]\label{prop:sizediff}
Let $x_0\in\R^d$, $\ttt\in\R^d$ and $\varepsilon>0$. Assume that the non-degeneracy condition \eqref{ConditionX1d eps} holds. We have then
\begin{equation}\label{sizeDH}
	\|\di H(\ttt)\|\leq 2^{d+1} e^{M|\ttt|_{\ell^1}},\quad \|\di^2H(\ttt)\|\leq 2^{2d+2} e^{2M|\ttt|_{\ell^1}},
\end{equation}
and
\begin{equation}\label{sizeDHinv}
	\|\di H(\ttt)^{-1}\|\leq \eps^{-d} (2^d-1) e^{M|\ttt|_{\ell^1}},
\end{equation}
almost surely, where we recall that $|\ttt|_{\ell^1}=|t_1|+\dotsb+|t_d|$.
\end{proposition}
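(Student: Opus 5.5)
The plan is to write $\di H(\ttt)$ column by column and estimate each factor through the flow's variational equation. Since $H(\ttt)=\Phi^{(d)}_{t_d}\circ\dotsb\circ\Phi^{(1)}_{t_1}(x_0)$, the chain rule together with \eqref{pull-back simplification} gives
\[
\partial_{t_i}H(\ttt)=\di\Phi^{(d)}_{t_d}(x_{t_{d-1}})\dotsb\di\Phi^{(i+1)}_{t_{i+1}}(x_{t_i})\,\cv^{(i)}(x_{t_i}).
\]
The basic tool is Grönwall applied to $\partial_t\di\Phi_t=\di\cv(\Phi_t)\,\di\Phi_t$, which yields $\|\di\Phi^{(k)}_{t}\|\le e^{M|t|}$ and $\|\di\Phi^{(k)}_t-\mathrm{Id}\|\le e^{M|t|}-1$. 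Differentiating once more gives $\|\di^2\Phi^{(k)}_t\|\le M|t|e^{2M|t|}$, or some similar bound controlled by $e^{2M|t|}$. Because $|\cv|\le M\le1$, each column is bounded by $e^{M|\ttt|_{\ell^1}}$, so $\|\di H\|\le d\,e^{M|\ttt|_{\ell^1}}\le 2^{d+1}e^{M|\ttt|_{\ell^1}}$.

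For $\di^2H$, I will differentiate the column formula with respect to $t_j$. This produces two kinds of terms. In the first, the derivative hits one of the factors $\di\Phi^{(k)}_{t_k}$, which gives either a term $\di\cv\cdot\di\Phi$ (when $k=j$) or a second derivative of $\Phi^{(k)}$ applied to $\partial_{t_j}$ of the base point. In the second, the derivative hits the base point of $\cv^{(i)}(x_{t_i})$. Each term is a product of at most one second-derivative factor with first-derivative factors and $M\le 1$. Counting terms gives at most on the order of $d^2\le 4^{d+1}$ contributions, each bounded by $e^{2M|\ttt|_{\ell^1}}$ (using $M|t|e^{M|t|}\le e^{2M|t|}$). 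This is routine bookkeeping.

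The main step is the inverse bound \eqref{sizeDHinv}, which I would prove by induction on $i$ using the factorization \eqref{dHi-dGi relation}. Set $P_i:=\di\Phi^{(i)}_{t_i}(x_{t_{i-1}})$. Pulling everything back, $\di H^{(i)}(\ttt_i)=P_i\,[\,\di H^{(i-1)}(\ttt_{i-1})\mid \cv^{(i)}(x_{t_{i-1}})\,]$, viewed as a map $\R^i\to\R^d$ onto its image. For $u=(u',\sigma)$, let $v=\di H^{(i-1)}u'\in T_{t_{i-1}}$ and $w=v+\sigma\cv^{(i)}(x_{t_{i-1}})$. By \eqref{ConditionX1d eps}, $|w|\ge|\sigma|\dist(\cv^{(i)},T_{t_{i-1}})\ge\eps|\sigma|$, and then $|v|\le|w|+|\sigma|M\le(1+1/\eps)|w|$. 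Combining this with the inductive lower bound $|u'|\le c_{i-1}|v|$ and with $\|P_i^{-1}\|\le e^{M|t_i|}$ gives
\[
|u|\le|u'|+|\sigma|\le \big(c_{i-1}(1+\eps^{-1})+\eps^{-1}\big)e^{M|t_i|}\,|\di H^{(i)}u|.
\]
This leads to the recursion $c_i\le e^{M|t_i|}\big(c_{i-1}(1+\eps^{-1})+\eps^{-1}\big)$ with $c_0=0$, which can be rearranged as $c_i+1\le(1+\eps^{-1})e^{M|t_i|}(c_{i-1}+1)$. For $\eps\le1$ (the relevant case, since $\dist\le|\cv|\le M\le1$) this gives $c_d\le\big((1+\eps^{-1})^d-1\big)e^{M|\ttt|_{\ell^1}}\le(2^d-1)\eps^{-d}e^{M|\ttt|_{\ell^1}}$. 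The expected obstacle is exactly this inductive lower bound. In particular one has to check that using the Euclidean norm on the split $(u',\sigma)$, rather than the sum $|u'|+|\sigma|$, does not spoil the constant $2^d-1$, and that no extra $e^{M|t|}$ factors accumulate beyond $e^{M|\ttt|_{\ell^1}}$.
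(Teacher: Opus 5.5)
Your proof is correct. For the inverse bound \eqref{sizeDHinv} you follow the paper's argument. You factor $\di H^{(i)}$ through \eqref{dHi-dGi relation}, control the new coordinate $\sigma$ by the distance condition, and control the old coordinates $u'$ by the induction hypothesis. Your recursion $c_i\le e^{M|t_i|}\big((1+\eps^{-1})c_{i-1}+\eps^{-1}\big)$ is a slightly sharper form (for $\eps\le 1$) of the paper's $c_i\le\eps^{-1}e^{M|t_i|}(2c_{i-1}+1)$. The norm issue you flag does not arise, since only $|(u',\sigma)|\le|u'|+|\sigma|$ is used, exactly as in the paper.

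The genuine difference is in $\di H$ and $\di^2H$. You differentiate the explicit column formula $\partial_{t_i}H=\di\Phi^{(d)}_{t_d}\cdots\di\Phi^{(i+1)}_{t_{i+1}}\,\cv^{(i)}(x_{t_i})$. The paper instead runs a recursion on $\di H^{(i)}$ and $\di^2H^{(i)}$ through $\di G_i$ and its second derivatives. Your route is more transparent and yields constants polynomial in $d$ instead of $2^{d+1}$ and $2^{2d+2}$. The paper's route keeps all three estimates inside the single inductive factorization.

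Two details need tightening.

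First, the bound $\|\di^2\Phi_t\|\le M|t|e^{2M|t|}$ would leave an unbounded factor $M|t|$ in the estimate of $\|\di^2H\|$. What you need is $\|\di^2\Phi_t\|\le e^{M|t|}(e^{M|t|}-1)\le e^{2M|t|}$, which is what \eqref{diff2Flow} together with \eqref{Me^M sigma} gives.

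Second, the rearrangement $c_i+1\le(1+\eps^{-1})e^{M|t_i|}(c_{i-1}+1)$ only yields $c_d\le(1+\eps^{-1})^de^{M|\ttt|_{\ell^1}}-1$. That is strictly larger than $\big((1+\eps^{-1})^d-1\big)e^{M|\ttt|_{\ell^1}}$ when $|\ttt|_{\ell^1}>0$, so it gives the constant $2^d$ rather than $2^d-1$. Instead, prove $c_i\le\big((1+\eps^{-1})^i-1\big)e^{M|\ttt_i|_{\ell^1}}$ directly by induction, absorbing the extra $\eps^{-1}$ term via $e^{M|\ttt_{i-1}|_{\ell^1}}\ge 1$. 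Then conclude with $(1+\eps^{-1})^d-1\le(2^d-1)\eps^{-d}$ for $\eps\le 1$, which is automatic because $\eps<|\cv^{(1)}(x_0)|\le M\le 1$, as you observed.
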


\begin{proof}
We start by noticing that, for any $i=1,\dots ,d$ and $x\in\mathbb R^d$, we have
\begin{equation}\label{bound dPhi}
    \left\|\left[\di\Phi^{(i)}_{t_i}(x)\right]^{\pm1} \right\|
    \leq \exp\left( \left\|\int_0^{t_i} \di\cv^{(i)}(\Phi^{(i)}_{s}(x))\, \ds \right\| \right)
    \leq e^{M|t_i|}.
\end{equation}
Then, by \eqref{DGi}, we have for any $i =2,\dots ,d$
\[
	\|\di G_i(x_{t_{i-1}},t_i)\|
	\leq 2 e^{M|t_i|}.
\]
Therefore, \eqref{dHi-dGi relation} implies 
\[
	\|\di H^{(i)}( \ttt_i) \|
	\leq  2 e^{M|t_i|}(\|\di H^{(i-1)}(\ttt_{i-1})\| + 1 ),
\]
which gives, by iteration, that for any $i=1,\dots, d$,
\begin{equation}\label{dHi bound2}
	\|\di H^{(i)}( \ttt_i) \|
	\leq  2(2^{i}-1) e^{M|\ttt_i|_{\ell^1}}.
\end{equation}
This ends the proof of the bound on $\di H(\ttt)$ in \eqref{sizeDH}.

Now, in order to obtain the bound on $\di^2H(\ttt)$ in \eqref{sizeDH}, we write \eqref{dHi-dGi relation} in the following form, where $\uuu_i = (u_1,\dots,u_i)\in \R^i$,
\begin{equation}\label{dHi-dGi relation2}
	\di H^{(i)}(\ttt_i) \cdot \uuu_i
	= \di_x G_i(x_{t_{i-1}},t_i)\cdot(\di H^{(i-1)}(\ttt_{i-1}) \cdot \uuu_{i-1}) +  u_i \di_s G_i(x_{t_{i-1}},t_i).
\end{equation}
Considering $\vvv_i = (v_1,\ldots,v_i)\in\mathbb R^i$ and differentiating \eqref{dHi-dGi relation2}, we obtain 
\begin{align*}
	\di^2 H^{(i)}(\ttt_i)\cdot(\uuu_i, \vvv_i)
	& = \di_x^2 G_i(x_{t_{i-1}},t_i)\cdot\big(\di H^{(i-1)}(\ttt_{i-1})\cdot \uuu_{i-1}, \di H^{(i-1)}(\ttt_{i-1})\cdot \vvv_{i-1} \big)\\
	&\quad + \di^2_{s,x} G_i(x_{t_{i-1}},t_i)\cdot\big(\di H^{(i-1)}(\ttt_{i-1})\cdot \uuu_{i-1}, v_i \big)\\
	&\quad + \di^2_{s,x} G_i(x_{t_{i-1}},t_i)\cdot\big(\di H^{(i-1)}(\ttt_{i-1})\cdot \vvv_{i-1}, u_i \big)\\
	&\quad + \di_x G_i(x_{t_{i-1}},t_i)\cdot\big( \di^2 H^{(i-1)}(\ttt_{i-1})\cdot( \uuu_{i-1}, \vvv_{i-1}) \big) + u_i v_i \di_s^2 G_i(x_{t_{i-1}},t_i).
\end{align*}
Using the definition \eqref{mapGi} of $G_i$, the bound \eqref{dHi bound2}, with the formula  
\begin{equation}\label{diff2Flow}
		\di^2\Phi^{(i)}_{t_i}(x)\cdot(h,k)
		=  \int_0^{t_i} \di\Phi^{(i)}_{t_i-\sigma}(x)\cdot \big( \di^2\cv^{(i)}(\Phi_{\sigma}^{(i)}(x))\cdot \big(
		\di\Phi_{\sigma}^{(i)}(x)\cdot h,
		\di\Phi_{\sigma}^{(i)}(x)\cdot k \big)
		\big)\, \mathrm d\sigma,
\end{equation}
and 
\begin{equation}\label{Me^M sigma}
   \left| \int_0^{s} M e^{M \sigma} \di \sigma \right| \le e^{M |s|},
\end{equation}
we obtain for $|\uuu_i|,|\vvv_i|\leq 1$,
\begin{align*}
	\big\|\di^2 H^{(i)}(\ttt_i)\cdot(\uuu_i,\vvv_i) \big\| 
	& \leq   e^{2 M |t_i|} \|\di H^{(i-1)}(\ttt_{i-1}) \|^2 +  2 e^{M |t_i|} \|\di H^{(i-1)}(\ttt_{i-1}) \|\\
	& \quad + e^{M |t_i|} \big\|\di^2 H^{(i-1)}(\ttt_{i-1})\cdot(\uuu_{i-1},\vvv_{i-1}) \big\| + e^{M|t_i|} \\
	& \leq  e^{M |t_i|} \big\|\di^2 H^{(i-1)}(\ttt_{i-1})\cdot( \uuu_{i-1}, \vvv_{i-1}) \big\| + (2^{i}-1)^2 e^{2M|\ttt_i|_{\ell^1}}.
\end{align*}
The bound of $\di^2H(\ttt)$ in \eqref{sizeDH} follows by iteration. 

Finally, in order to obtain the bound \eqref{sizeDHinv} on $\|\di H(\ttt)^{-1}\|$, we assume \eqref{ConditionX1d eps} and we use \eqref{pull-back simplification} to obtain 
\begin{equation*}
	\di H^{(1)}(t_1)= \di \Phi^{(1)}_{t_1}(x_0)\cdot\cv^{(1)}(x_0),
\end{equation*}
and therefore 
\begin{equation}\label{sizeDHinv1}
	\Big\| \big[\di H^{(1)}(t_1)\big]^{-1} \Big\| \leq \varepsilon^{-1} e^{M|t_1|}.
\end{equation}
We consider now for $i=2,\dots ,d$, the equation $\di H^{(i)}(\ttt_i)\cdot \uuu_i = z$. 
Note that due to \eqref{Hi diffeomorphism},
\[
    \di H^{(i-1)}(\ttt_{i-1})\cdot\uuu_{i-1} \in T_{t_{i-1}},
\]
and \eqref{dHi-dGi relation} with  \eqref{DGi} give
\begin{equation}\label{INV dGi}
	\di H^{(i-1)}(\ttt_{i-1})\cdot \uuu_{i-1} + u_i \cv^{(i)}(x_{t_{i-1}}) = \big[\di\Phi^{(i)}_{t_i}(x_{t_{i-1}})\big]^{-1} (z).
\end{equation}
Therefore, using the property $\dist(\cv^{(i)}(x_{t_{i-1}}),T_{t_{i-1}})>\eps$ and \eqref{bound dPhi}, we obtain 
\begin{equation}\label{INV dG2 i0}
    |u_i|\leq \eps^{-1}\Big|\big[\di\Phi^{(i)}_{t_i}(x_{t_{i-1}})\big]^{-1}(z) \Big|\leq \eps^{-1}e^{M|t_i|}|z|.
\end{equation}
Inserting \eqref{INV dG2 i0} in \eqref{INV dGi}, we have thus
\begin{equation}\label{INV dG2 i}
	|\uuu_{i-1}|\leq \Big\| \big[\di H^{(i-1)}(\ttt_{i-1})  \big]^{-1} \Big\|
	2\eps^{-1}e^{M|t_i|}|z|.
\end{equation}
Combining \eqref{INV dG2 i0} with \eqref{INV dG2 i}, we get 
\[
    \Big\| \big[\di H^{(i)}(\ttt_{i})  \big]^{-1} \Big\|
	\leq  \eps^{-1}e^{M|t_i|} \Big( 2 \Big\| \big[\di H^{(i-1)}(\ttt_{i-1})  \big]^{-1} \Big\| + 1 \Big).
\]
By iteration, and using \eqref{sizeDHinv1} we obtain \eqref{sizeDHinv}.  
\end{proof}

We will also need a Lipschitz bound on the function $K$ defined in \eqref{K definition}.

\begin{proposition}[A local Lipschitz bound for $K$]
The function $K$ defined in \eqref{K definition} satisfies the following estimates for any $\ttt\in\mathbb R^d$ and $x\in\mathbb R^d$
\begin{equation}\label{K bound}
    \frac{e^{-\lambda  |\ttt|_{\ell^1}}}{(2+\lambda)^d} \leq K(\ttt,x) \leq \frac{e^{\lambda |\ttt|_{\ell^1}}}{(2-\lambda)^d},
\end{equation}	
and
\begin{equation}\label{dK bound}
	\|\di_\ttt K(\ttt,x)\| \leq C(\lambda,M, d) e^{( 2 M +\lambda )|\ttt|_{\ell^1}},
\end{equation}
almost surely.
\end{proposition}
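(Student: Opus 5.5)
The two-sided bound \eqref{K bound} follows directly from \eqref{E bound}. Each factor $E(t_i,\cdot)$ of the product \eqref{K definition} lies between $e^{-\lambda t_i^-}/(2+\lambda)$ and $e^{\lambda t_i^-}/(2-\lambda)$, uniformly in the point where it is evaluated. Since $t_i^-\le |t_i|$, multiplying the $d$ factors gives $(2\pm\lambda)^{-d}e^{\pm\lambda|\ttt|_{\ell^1}}$. Positivity of each factor is used so the product of the bounds is legitimate.

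For \eqref{dK bound}, I differentiate the product with the Leibniz rule:
\[
\partial_{t_j}K(\ttt,x)=\sum_{i=1}^d\Big(\prod_{k\ne i}E\big(t_k,x_{t_{k-1}}\big)\Big)\,\partial_{t_j}\Big[E\big(t_i,x_{t_{i-1}}\big)\Big],
\]
where $x_{t_{i-1}}=H^{(i-1)}(\ttt_{i-1})$ as in \eqref{chainpoints}. The factor $E(t_i,x_{t_{i-1}})$ depends on $t_j$ only for $j\le i$. For $j<i$, the derivative is $\di_x E(t_i,x_{t_{i-1}})\cdot\partial_{t_j}H^{(i-1)}(\ttt_{i-1})$. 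By \eqref{dHi bound2}, the second factor is bounded by $C(d)e^{M|\ttt_{i-1}|_{\ell^1}}$. For $j=i$, there is the additional term $\partial_t E(t_i,\cdot)$. The products of the other factors are bounded by $C(\lambda,d)e^{\lambda|\ttt|_{\ell^1}}$ by the first step. It therefore suffices to show the single-factor estimates
\[
|\partial_t E(t,x)|+|\di_x E(t,x)|\le C(\lambda,M)\,e^{(M+\lambda)|t|}.
\]

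To get these, I use \eqref{E def} together with the Liouville formula \eqref{determinantII}. The Jacobian is $J\Phi_{-s}(z)=\exp\big(-\int_0^s\divv\cv(\Phi_{-\sigma}(z))\,\mathrm d\sigma\big)$. After composing with $\Phi_{t+s}$, the integrand becomes $\exp\big(-\int_0^{s}\divv\cv(\Phi_{t+s-\sigma}(x))\,\mathrm d\sigma\big)=\exp\big(-\int_t^{t+s}\divv\cv(\Phi_{\tau}(x))\,\mathrm d\tau\big)$.

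The $x$-derivative brings down $\int_t^{t+s}\di(\divv\cv)(\Phi_\tau(x))\,\di\Phi_\tau(x)\,\mathrm d\tau$. This is bounded by $M\int e^{M|\tau|}\,\mathrm d\tau$ by \eqref{bound dPhi}, and then by $e^{M(|t|+s)}$ via \eqref{Me^M sigma}. Against the weight $e^{-2(s-t^-)}e^{\lambda s}$, the $s$-integral converges because $M+\lambda<2$; this holds since $M<1/3$ and $\lambda\le M$ by \eqref{reduction M lambda}. The result is $C e^{(M+\lambda)|t|}$.

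For $\partial_t E$, I differentiate the exponent, which gives boundary terms $\divv\cv$ evaluated at $\Phi_{t+s}(x)$ and $\Phi_t(x)$, each bounded by $\lambda$. I also differentiate the lower limit $t^-$, which is Lipschitz with an a.e. derivative in $\{0,-1\}. This gives a boundary contribution $(J\Phi_{-t^-})\circ\Phi_{t+t^-}$, bounded by $e^{\lambda|t|}$. These terms are harmless.

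The main point requiring care is the product of growth rates. Summing over $i$ and $j$ produces $e^{\lambda|\ttt|_{\ell^1}}\cdot e^{(M+\lambda)|t_i|}\cdot e^{M|\ttt|_{\ell^1}}$, which exceeds the claimed exponent $2M+\lambda$ by $\lambda|t_i|$. To absorb this, I would refine the bound: the derivative of factor $i$ should be taken relative to $E$ itself, i.e. bound $|\partial\log E|\le C e^{M|t_i|}$ times the Jacobian factors. Then the $\lambda$-growth appears only once, and the total is $e^{\lambda|\ttt|}e^{M|\ttt|}e^{M|\ttt|}$, giving the constant $C(\lambda,M,d)$. If that refinement fails, I would use $\lambda\le M$ and state the bound with $2M+\lambda$ after adjusting constants.
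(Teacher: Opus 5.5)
Your route is the paper's: \eqref{K bound} from \eqref{E bound}, then the Leibniz rule on \eqref{K definition2}, the chain rule through $H^{(i-1)}$ with \eqref{dHi bound2}, and single-factor bounds on $\partial_tE$ and $\di_xE$ from Liouville's formula. The gap is in the final assembly, where you leave the proof open.

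You bound the undifferentiated factors by $Ce^{\lambda|\ttt|_{\ell^1}}$ and $\partial_{t_j}H^{(i-1)}$ by $e^{M|\ttt|_{\ell^1}}$. You then find the exponent overshoots by $\lambda|t_i|$, and neither remedy you offer works. The $\partial\log E$ refinement is never carried out. The fallback fails because constants cannot absorb an exponent: applying $\lambda\le M$ to $\lambda|\ttt|_{\ell^1}+(M+\lambda)|t_i|+M|\ttt|_{\ell^1}$ gives at best $(2M+2\lambda)|\ttt|_{\ell^1}$.

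The obstruction is only a bookkeeping artifact. By \eqref{E bound}, each undifferentiated factor $E(t_k,\cdot)$ costs $e^{\lambda|t_k|}/(2-\lambda)$, not $e^{\lambda|\ttt|_{\ell^1}}$. Also, $H^{(i-1)}$ depends only on $\ttt_{i-1}$; you had noted this earlier and then dropped it. Keeping track of this, the $(i,j)$ term with $j<i$ is bounded by
\[
C\,e^{\lambda\sum_{k\neq i}|t_k|}\,e^{(M+\lambda)|t_i|}\,e^{M|\ttt_{i-1}|_{\ell^1}}=C\,e^{\lambda|\ttt|_{\ell^1}}\,e^{M|\ttt_i|_{\ell^1}}\le C\,e^{(M+\lambda)|\ttt|_{\ell^1}} .
\]
The term $j=i$ is at most $C e^{\lambda\sum_{k\ne i}|t_k|}e^{(M+\lambda)|t_i|}$, and the terms $j>i$ vanish. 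Summing gives $\|\di_\ttt K\|\le C(\lambda,M,d)e^{(M+\lambda)|\ttt|_{\ell^1}}$, which is stronger than \eqref{dK bound}. This is the computation behind the paper's one-line conclusion.

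There are two smaller inaccuracies in your single-factor bounds.

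\begin{enumerate}
\item For $\di_xE$, you bound the inner integral by $e^{M(|t|+s)}$ and integrate over $s\ge t^-$. For $t<0$ this gives $Ce^{(2M+\lambda)|t|}$, not the $Ce^{(M+\lambda)|t|}$ you state. To get the sharper rate, use that $[t,t+s]\subset[-s,s]$ when $t<0\le t+s$, or shift $s\mapsto s+t^-$ as the paper does. With the bookkeeping above, even the cruder rate suffices: the $(i,j)$ term is then $\lesssim e^{\lambda|\ttt|_{\ell^1}}e^{2M|\ttt_i|_{\ell^1}}$.
\item In $\partial_tE$ you omit the contribution of the weight $e^{-2(s-t^-)}$, namely $2(\partial_t t^-)E$. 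It is harmless, since it is bounded by $2E\le 2e^{\lambda|t|}/(2-\lambda)$.
\end{enumerate}
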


\begin{proof}
The first bound \eqref{K bound} is a direct consequence of \eqref{E bound} and the definition \eqref{K definition}.
Using the definition \eqref{E def} of the function $E$, that we can rewrite as 
\[
 E(t,x) =  \int_0^{+\infty} e^{-2s}\, (J\Phi_{-s-t^-})(\Phi_{s+t^+}(x))\, \ds, 
\]
where $t^+:=t+t^-=\max\{0,t\}$. Using now Liouville's formula \eqref{determinantII}, we obtain
\begin{multline*}
    \partial_t E(t,x) 
    = \mathbbm{1}_{t< 0} \int_0^{+\infty} e^{-2s}\, (\divv(\cv)\circ\Phi_{t-s})(x) (J\Phi_{t-s})(\Phi_s(x))\, \ds\\
    + \mathbbm{1}_{t>0}\int_0^{+\infty} e^{-2s}(J\Phi_{-s})(\Phi_{s+t}(x))
    \bigg(\int_0^{-s} \di_x (\divv(\cv)\circ\Phi_{\sigma}) (\Phi_{s+t}(x))\cdot a(\Phi_{s+t}(x))\,\di\sigma\bigg)\,\ds,
\end{multline*}
and
\begin{multline*}
    \di_x E(t,x) = -\int_0^{+\infty} e^{-2s} (J\Phi_{-s-t^-})(\Phi_{s+t^+}(x))\\
    \times\bigg(\int_0^{s+t^-} \di_x(\divv(\cv)\circ\Phi_{-\sigma})(\Phi_{s+t^+}(x))\,\mathrm d\sigma\bigg)\circ\di \Phi_{s+t^+}(x) \, \ds.
\end{multline*}
The bound \eqref{bound dPhi} on $\di\Phi_t$ and the assumption \eqref{divergence smallness} on the divergence of $\cv$ ensure that 
\[
    |\di_x(\divv(\cv)\circ\Phi_{-\sigma})(x)|\leq Me^{M|\sigma|}\quad\text{and}\quad
    |(\divv(\cv)\circ\Phi_{-s-t^-})(x)|\leq \lambda.
\]
Thus, using also the bound from above \eqref{E bound} on $E$, the inequality \eqref{Me^M sigma}, and recalling that $0\le\lambda\le M<1/3$ by \eqref{reduction M lambda}, one can prove that for any $t \in \R$ and $x \in \R^d$, we have
\begin{equation}\label{DE}
    |\partial_t E(t,x)| \leq C(\lambda,M) e^{\lambda |t|}\quad\text{and}\quad \|\di_x E(t,x)\| \leq C(\lambda,M) e^{(M+\lambda)|t|},
\end{equation}
almost surely. Using the definition \eqref{mapFparamyi} of $H^{(i)}$, we write
\begin{equation}\label{K definition2}
	K(\ttt,x) =  \prod_{i=1}^d E\big(t_i, H^{(i-1)}(\ttt_{i-1}) \big).
\end{equation}
Using the estimate \eqref{dHi bound2} on $\di H^{(i)}$ with the pointwise bounds \eqref{E bound} on $E$ and \eqref{DE} on $\di E$, and the formula \eqref{K definition2}, we obtain \eqref{dK bound}.
\end{proof}

\paragraph{Perturbation of the non-degeneracy condition.}

The construction of the change of variable \eqref{eq:Fparam} will require perturbative results on the non-degeneracy condition \eqref{ConditionX1d eps}, all derived from the following general one.

\begin{proposition}\label{prop:perturb} Let $x_0\in\mathbb R^d$, $c>1$ and $\ttt\in\mathbb R^d$  satisfying the weak non-degeneracy condition \eqref{ConditionX1d}. Let $\eta = \eta(\ttt,c,d,M)\geq0$ satisfying
\begin{equation}\label{eq:smallnessperturb}
    2^{7d+5} c^2 \eta e^{ M(5\vert\ttt\vert_{\ell^1}+4d\eta)} \le 1.
\end{equation}
Then, for every $\sss\in B(\ttt,\eta)$, we have
\begin{equation}\label{perturbed non degeneracy}
	\forall i\in\{1,\ldots,d\},\quad\dist(\cv^{(i)}(x_{s_{i-1}}),T_{s_{i-1}})\geq\frac{c-1}{c+1}\dist(\cv^{(i)}(x_{t_{i-1}}),T_{t_{i-1}}).
\end{equation}
\end{proposition}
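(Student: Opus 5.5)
We fix $i\in\{1,\ldots,d\}$ and compare the two quantities $\dist(\cv^{(i)}(x_{s_{i-1}}),T_{s_{i-1}})$ and $\dist(\cv^{(i)}(x_{t_{i-1}}),T_{t_{i-1}})$ through two separate perturbation bounds. The first concerns the vector $\cv^{(i)}$, which is evaluated at two different points. The second concerns the subspaces $T_{s_{i-1}}$ and $T_{t_{i-1}}$, which are two $(i-1)$-dimensional subspaces of $\R^d$. Write $\varepsilon_i:=\dist(\cv^{(i)}(x_{t_{i-1}}),T_{t_{i-1}})>0$, which is positive by \eqref{ConditionX1d}. The target \eqref{perturbed non degeneracy} will follow once we show that the total perturbation is at most $\frac{2}{c+1}\varepsilon_i$, since $1-\frac{2}{c+1}=\frac{c-1}{c+1}$.

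\textbf{Step 1: the point.} We have $x_{s_{i-1}}=H^{(i-1)}(\sss_{i-1})$ and $x_{t_{i-1}}=H^{(i-1)}(\ttt_{i-1})$. By \eqref{dHi bound2} applied along the segment joining $\ttt_{i-1}$ and $\sss_{i-1}$, we get $|x_{s_{i-1}}-x_{t_{i-1}}|\le 2^{i}e^{M(|\ttt|_{\ell^1}+d\eta)}\eta$. Since $\|\di\cv\|\le M\le1$, it follows that $|\cv^{(i)}(x_{s_{i-1}})-\cv^{(i)}(x_{t_{i-1}})|\le 2^{d}e^{M(|\ttt|_{\ell^1}+d\eta)}\eta$.

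\textbf{Step 2: the subspaces.} The space $T_{t_{i-1}}$ is the range of the injective map $\di H^{(i-1)}(\ttt_{i-1})$. We use the elementary inequality
\[
\dist(v,\operatorname{Ran}A')\ \ge\ \dist(v,\operatorname{Ran}A)-\|A'-A\|\,\|A^{-1}\|\,|v|,
\]
where $A^{-1}$ is the left inverse of $A$ on its range. It is proved as follows: for $b=A'u$, write $b=Au+(A'-A)u$, and control $|u|$ by $|u|\le\|A^{-1}\|(|b|+\dots)$; to close the argument one can restrict to $|b|\le 2|v|$, because the nearest point satisfies this bound. Apply it with $A=\di H^{(i-1)}(\ttt_{i-1})$, $A'=\di H^{(i-1)}(\sss_{i-1})$ and $v=\cv^{(i)}(x_{s_{i-1}})$, with $|v|\le1$. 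The factor $\|A'-A\|$ is controlled by the mean value theorem together with the bound on $\di^2H^{(i-1)}$ in \eqref{sizeDH}, taken at intermediate points; this gives $\|A'-A\|\le 2^{2d+2}e^{2M(|\ttt|_{\ell^1}+d\eta)}\eta$. The factor $\|A^{-1}\|$ is given by \eqref{sizeDHinv}.

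\textbf{Main obstacle.} The difficulty is that \eqref{sizeDHinv} is stated with a uniform precision $\eps^{-d}$, whereas here only $\varepsilon_1,\ldots,\varepsilon_{i-1}$ at $\ttt$ are available, and the required smallness \eqref{eq:smallnessperturb} contains no $\varepsilon$ at all. This suggests that $\eta$ must be measured relative to the $\varepsilon_j$'s: either $c$ implicitly encodes $1/\varepsilon$, or the intended proof proceeds by induction on $i$. I would try the induction first. We rerun the recursion from the proof of Proposition~\ref{prop:sizediff} with the individual $\varepsilon_j$, obtaining $\|A^{-1}\|\lesssim 2^{i}e^{M|\ttt|_{\ell^1}}\prod_{j<i}\varepsilon_j^{-1}$. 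We then absorb the constants, arranging $\eta$ so that the errors from Steps 1 and 2 together are at most $\frac{2}{c+1}\varepsilon_i$. The explicit factors $2^{7d+5}$, $c^2$ and $e^{M(5|\ttt|_{\ell^1}+4d\eta)}$ in \eqref{eq:smallnessperturb} should come out of tallying the exponentials $e^{M|\ttt|}$ (one from $\|A^{-1}\|$, two from $\di^2H$, and one from Step 1, plus margin) and the powers of $2$ from \eqref{sizeDH} and \eqref{sizeDHinv}. If this bookkeeping fails to close without $\varepsilon$, I would reinterpret the statement with $\eta$ depending on $\ttt$ through the $\varepsilon_j$, which matches the dependence $\eta=\eta(\ttt,\ldots)$ announced in the statement.
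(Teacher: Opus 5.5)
Your Steps 1--2 cannot give the statement as written. They control $D_i(\uuu):=\dist(\cv^{(i)}(x_{u_{i-1}}),T_{u_{i-1}})$ only up to an \emph{absolute} error of order $\eta\prod_{j<i}\varepsilon_j^{-1}$. No condition free of $\varepsilon_i=D_i(\ttt)$ makes this error at most $\frac{2}{c+1}\varepsilon_i$, and an induction on $i$ does not change that.

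The paper aims instead at a \emph{relative} bound. It writes $D_i(\uuu)^2=\det G(X)/\det G(\mathcal B)$, where $\mathcal B$ is a basis of $T_{u_{i-1}}$ and $X$ is that basis with $\cv^{(i)}(x_{u_{i-1}})$ adjoined. It then asserts $|\det G(X')-\det G(X)|\le 2^n\|G(X)\|\,\|G(X')-G(X)\|\det G(X)$, which yields $|D_i(\ttt)-D_i(\sss)|\le\frac1c\,(D_i(\ttt)+D_i(\sss))$ under \eqref{eq:smallnessperturb}.

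That assertion rests on \eqref{det estimate} with the factor $\|A\|/\|B^{-1}\|$, which is misquoted. The Ipsen--Rehman bound has $\|B^{-1}\|\,\|A\|$; the quoted form already fails for $A=B=tI$ as $t\to0$. The misquotation is harmless in Proposition~\ref{prop:CVAR final}, where $B=\mathrm{Id}$. With the correct factor one needs $\|G(X)^{-1}\|$, which at $\ttt$ is at least $D_i(\ttt)^{-2}$, so $\varepsilon$ re-enters.

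In fact your doubt is justified: the proposition is false. Take $d=2$, $x_0=0$, $\cv^{(1)}\equiv\mu e_1$ and $\cv^{(2)}(x)=\mu(e_1+\sin(x_1)e_2)$ with $\mu=M/4$. Both fields have $C^2_b$-norm at most $M$ and are divergence free. Then $x_{t_1}=\mu t_1e_1$, $T_{t_1}=\R e_1$ and $D_2(\ttt)=\mu|\sin(\mu t_1)|$. Let $c=2$, $\eta=2^{-23}$ and $\ttt=(\tau,0)$ with $0<\tau<\eta$. Then \eqref{ConditionX1d} holds at $\ttt$ and \eqref{eq:smallnessperturb} holds. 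Yet $\sss=(0,0)\in B(\ttt,\eta)$ gives $D_2(\sss)=0<\frac13\mu\sin(\mu\tau)$.

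So what is missing is a corrected hypothesis, not finer bookkeeping, and your fallback is the right one. You should state it as the result rather than as a contingency.

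Running the recursion of Proposition~\ref{prop:sizediff} with the individual $\varepsilon_j$ gives $\|[\di H^{(i-1)}(\ttt_{i-1})]^{-1}\|\le(2^{i-1}-1)e^{M|\ttt|_{\ell^1}}\prod_{j<i}\varepsilon_j^{-1}$. Your Step~2 inequality needs a factor $2$ and the absorption condition $\|A^{-1}\|\,\|A'-A\|\le\frac12$. Indeed, if $b=A'u$ is the orthogonal projection of $v$ on $\operatorname{Ran}A'$, then $|u|\le\|A^{-1}\|(|b|+\|A'-A\|\,|u|)$ gives $|u|\le2\|A^{-1}\|\,|v|$. The same absorption shows that $A'$ is injective, so $T_{s_{i-1}}=\operatorname{Ran}A'$ has dimension $i-1$.

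Combined with Step~1, this proves \eqref{perturbed non degeneracy} under a condition of the form $C(d)\,\eta\,e^{3M(|\ttt|_{\ell^1}+d\eta)}\le\frac{2}{c+1}\prod_{j\le i}\varepsilon_j$. The example shows the dependence on $\varepsilon_i$ cannot be removed. Downstream, the radius \eqref{eq:radius} used in Proposition~\ref{prop:CVAR final} must then depend on $\eps$ rather than being independent of it. For $\ttt\in\nd_{\ray,\eps,x_0}$ this argument gives a radius of order $\eps^{d}e^{-CM\ray}$.
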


The proof of Proposition \ref{prop:perturb}, which is somewhat long, is postponed in Appendix \ref{app:perturbcond}. 

We can now derive two results from Proposition \ref{prop:perturb}. The first one states that the non-degeneracy condition \eqref{ConditionX1d eps} is open with respect to $\ttt\in\mathbb R^d$.

\begin{corollary}\label{cor:opencond} For every $x_0\in\mathbb R^d$ and $\varepsilon>0$, the set $\{\ttt\in\mathbb R^d : \text{\eqref{ConditionX1d eps} is satisfied}\}$ is open.
\end{corollary}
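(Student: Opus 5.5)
Fix $x_0\in\mathbb R^d$, $\varepsilon>0$, and a point $\ttt$ at which \eqref{ConditionX1d eps} holds. We will show that \eqref{ConditionX1d eps} also holds on a whole ball $B(\ttt,\eta)$ with $\eta>0$; this gives openness. Let
\[
m:=\min_{1\le i\le d}\dist(\cv^{(i)}(x_{t_{i-1}}),T_{t_{i-1}}).
\]
The minimum is over finitely many indices, so the strict inequalities in \eqref{ConditionX1d eps} give $m>\varepsilon$. Moreover $\varepsilon>0$, so each distance is positive. Hence $\cv^{(i)}(x_{t_{i-1}})\notin T_{t_{i-1}}$ for every $i$, which is exactly the weak non-degeneracy condition \eqref{ConditionX1d}. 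This is the hypothesis needed to apply Proposition~\ref{prop:perturb} at $\ttt$.

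Next we choose the constant $c>1$ so that the factor in \eqref{perturbed non degeneracy} stays above $\varepsilon/m$. The map $c\mapsto (c-1)/(c+1)$ increases to $1$ as $c\to+\infty$. Since $\varepsilon/m<1$, we can fix $c>1$ with
\[
\frac{c-1}{c+1}>\frac{\varepsilon}{m}.
\]
With $c$ fixed, we pick $\eta>0$ small enough that \eqref{eq:smallnessperturb} holds. This is possible because, as $\eta\to0^+$, the left-hand side $2^{7d+5}c^2\eta\, e^{M(5|\ttt|_{\ell^1}+4d\eta)}$ tends to $0$. Proposition~\ref{prop:perturb} then gives, for every $\sss\in B(\ttt,\eta)$ and every $i$,
\[
\dist(\cv^{(i)}(x_{s_{i-1}}),T_{s_{i-1}})\ge\frac{c-1}{c+1}\,m>\varepsilon.
\]
This is exactly (ND$_{\varepsilon,x_0,\sss}$). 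So $B(\ttt,\eta)$ lies inside the set in question, and the set is open.

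The only delicate step is to confirm that Proposition~\ref{prop:perturb} needs only the weak condition \eqref{ConditionX1d} at the base point. We also need the tangent spaces $T_{s_{i-1}}$ to be well defined at each perturbed point $\sss$, since they enter \eqref{perturbed non degeneracy}. Both hold as stated: the conclusion of the proposition itself shows that $\cv^{(i)}(x_{s_{i-1}})\notin T_{s_{i-1}}$, so the inductive construction of the submanifolds $N_i$ goes through at $\sss$. Apart from this check, the argument uses no new estimates. The strict inequality in the definition of \eqref{ConditionX1d eps} is what leaves the room to choose $c$.
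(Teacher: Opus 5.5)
Your proof is correct and follows the paper's argument. The paper applies Proposition~\ref{prop:perturb} with ``$c\gg_{\varepsilon,\ttt}1$'' and a small $\eta>0$; your choice of $c$ with $\frac{c-1}{c+1}>\varepsilon/m$, where $m=\min_i\dist(\cv^{(i)}(x_{t_{i-1}}),T_{t_{i-1}})>\varepsilon$, simply makes that shorthand explicit. Your remark on why $T_{s_{i-1}}$ is well defined should be argued by induction on $i$: non-degeneracy at the levels below $i$ makes $T_{s_{i-1}}$ a genuine $(i-1)$-dimensional space. Appealing to the proposition's conclusion for this is circular as written, but it does not affect the main argument.
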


\begin{proof} Let $\ttt\in \mathbb R^d$ be such that the non-degeneracy condition \eqref{ConditionX1d eps} holds. Then, considering a constant $c\gg_{\varepsilon,\ttt}1$ large enough in \eqref{perturbed non degeneracy}, we get that the non-degeneracy condition (\hyperref[ConditionX1d eps]{$\mathrm{ND}_{\varepsilon, x_0,\sss}$}) also holds for every $\sss$ in a small neighborhood of $\ttt$.
\end{proof}

We also need the following quantitative perturbative result where we allow to have a loss in the parameter $\varepsilon>0$.

\begin{corollary}\label{cor:perturb} Let $x_0\in\mathbb R^d$, $\ttt\in\mathbb R^d$ and $\varepsilon>0$ such that the non-degeneracy condition \eqref{ConditionX1d eps} is satisfied. Consider also $\gamma>1$ and $\eta = \eta(\ttt,d,\gamma,M)>0$ satisfying
\[
	2^{7d+5} c_{\gamma}^2 \eta e^{ M(5\vert\ttt\vert_{\ell^1}+4d\eta)}\le 1,\quad\text{where}\quad 
	\text{$c_{\gamma}>\frac{\gamma+1}{\gamma-1}$ is arbitrary}.
\]
Then, for every $\sss\in B(\ttt,\eta)$, the non-degeneracy condition $\mathrm{(}$\hyperref[ConditionX1d eps]{$\mathrm{ND}_{\varepsilon/\gamma, x_0,\sss}$}$\mathrm{)}$ also holds.
\end{corollary}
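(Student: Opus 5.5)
The plan is to apply Proposition~\ref{prop:perturb} with the constant $c=c_\gamma$. First, we check its hypotheses. The condition (ND$_{\varepsilon,x_0,\ttt}$) says that $\dist(\cv^{(i)}(x_{t_{i-1}}),T_{t_{i-1}})>\eps>0$ for every $i$. In particular $\cv^{(i)}(x_{t_{i-1}})\notin T_{t_{i-1}}$, so the weak non-degeneracy condition \eqref{ConditionX1d} holds at $\ttt$. Next, $c_\gamma>\frac{\gamma+1}{\gamma-1}>1$ because $\gamma>1$, so $c:=c_\gamma$ is admissible. Finally, the smallness assumption on $\eta$ in the statement is exactly \eqref{eq:smallnessperturb} with $c=c_\gamma$.

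Proposition~\ref{prop:perturb} then gives, for every $\sss\in B(\ttt,\eta)$ and every $i\in\{1,\ldots,d\}$,
\[
	\dist(\cv^{(i)}(x_{s_{i-1}}),T_{s_{i-1}})\geq\frac{c_\gamma-1}{c_\gamma+1}\dist(\cv^{(i)}(x_{t_{i-1}}),T_{t_{i-1}})>\frac{c_\gamma-1}{c_\gamma+1}\,\eps .
\]
The strict inequality uses $\frac{c_\gamma-1}{c_\gamma+1}>0$.

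It remains to compare the factor $\frac{c_\gamma-1}{c_\gamma+1}$ with $1/\gamma$. The map $c\mapsto\frac{c-1}{c+1}=1-\frac{2}{c+1}$ is strictly increasing on $(1,+\infty)$. At $c=\frac{\gamma+1}{\gamma-1}$ it takes the value
\[
	\frac{\frac{\gamma+1}{\gamma-1}-1}{\frac{\gamma+1}{\gamma-1}+1}=\frac{2}{2\gamma}=\frac1\gamma .
\]
Since $c_\gamma>\frac{\gamma+1}{\gamma-1}$, we get $\frac{c_\gamma-1}{c_\gamma+1}>1/\gamma$. Hence $\dist(\cv^{(i)}(x_{s_{i-1}}),T_{s_{i-1}})>\eps/\gamma$ for all $i$, which is (ND$_{\varepsilon/\gamma,x_0,\sss}$).

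The only points needing care are the strictness of the inequalities and the fact that the tangent spaces $T_{s_{i-1}}$ are well defined for $\sss\in B(\ttt,\eta)$. The latter is implicit in Proposition~\ref{prop:perturb}, so there is no genuine obstacle: the whole argument reduces to the monotonicity computation above.
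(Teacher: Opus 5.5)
Your proposal is correct and is the paper's argument: the paper's proof is the single line ``apply Proposition~\ref{prop:perturb} with $c=c_{\gamma}$.'' You have also written out the checks the paper leaves implicit, namely that (ND$_{\varepsilon,x_0,\ttt}$) gives the weak condition \eqref{ConditionX1d}, that $c_\gamma>1$, and that $\frac{c_\gamma-1}{c_\gamma+1}>\frac{1}{\gamma}$ by monotonicity.
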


\begin{proof} It suffices to apply Proposition \ref{prop:perturb} with $c=c_{\gamma}$.
\end{proof}

\paragraph{The change of variable.}
We can now properly define the change of variable \eqref{eq:Fparam} that we will use to estimate the function \eqref{Theta circ d delta w}. Precisely, given $x_0\in\mathbb R^d$, $\varepsilon>0$ and $\ray>0$, this change of variable will be performed locally on the following set
\begin{equation}\label{eq:setnd}
	\nd_{\ray,\eps,x_0} = \big\{\ttt\in B(0,\ray) : \text{\eqref{ConditionX1d eps} is satisfied}\big\}.
\end{equation}
It is fundamental to notice that thanks to Corollary \ref{cor:opencond}, the set $\nd_{\ray,\eps,x_0}$ is open.

\begin{proposition}[Change of variable]\label{prop:CVAR final}  Let $x_0\in\mathbb R^d$, $\eps>0$ and $\ray\geq1$. Fix $\omega\in\Omega$ such that the conditions
\[
	f(\omega,\cdot),\quad g(\omega,\cdot)\in L^p_x,\quad \|\cv(\omega,\cdot)\|_{C^2_b(\R^d;\R^d)}\leq M,\quad 
	\|\divv_x(\cv(\omega,\cdot))\|_{L^{\infty}(\R^d)} \leq \lambda
\]
are satisfied. Assume that the set $\nd_{\ray,\eps,x_0}$ is nonempty. Consider also $y\in\R^d$ satisfying the following smallness condition 
\begin{equation}\label{very small y}
    \eps^{-2d-\alpha} 2^{10d+12} e^{8dM(\ray+\eta)}\vert y\vert \leq 1,
\end{equation}
where $\alpha\in(0,1]$ is the one appearing in \eqref{nd-a} and $\eta = \eta(d,\ray,M)>0$ is implicitly defined by
\begin{equation}\label{eq:radius}
    2^{7d+9}\eta e^{Md(5\ray+ 4\eta)} = 1.
\end{equation}
Then, there exists a $C^2$-diffeomorphism $\sss:\ttt\in\nd_{\ray,\eps,x_0}\mapsto\sss(\ttt;y)$ satisfying
\[
	H(\sss(\ttt;y)) = H(\ttt) + y.
\]
Moreover, the following estimates
\begin{equation}\label{s close to t}
	\vert\sss(\ttt;y)-\ttt\vert\leq \eps^{-d} 2^{2d} 3 e^{dM(\ray+\eta)} \vert y\vert\le\varepsilon^\alpha
\end{equation}
and
\begin{equation}\label{size Jacobian}
	\vert\det(\di\sss(\ttt;y))-1\vert \leq \eps^{-2d} 2^{10d+11}e^{7 dM(\ray+\eta)} \vert y\vert\le\frac12 \varepsilon^\alpha
\end{equation}
are satisfied.
\end{proposition}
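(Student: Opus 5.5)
For each fixed $\ttt\in\nd_{\ray,\eps,x_0}$, I would construct $\sss(\ttt;y)$ with the quantitative local inversion results of Appendix~\ref{sec:local inversion} (Proposition~\ref{prop:from Ekeland} for existence, Proposition~\ref{prop:from Ekeland Uniqueness} for uniqueness). These are applied on the ball $B(\ttt,\eta)$, using the derivative bounds of Proposition~\ref{prop:sizediff} uniformly on that ball. Corollary~\ref{cor:perturb} supplies the uniformity: with $\gamma=2$ we may take $c_\gamma=4>3$, and then $2^{7d+5}c_\gamma^2=2^{7d+9}$. So \eqref{eq:radius}, together with $|\ttt|_{\ell^1}\le \sqrt d\,|\ttt|\le d\ray$, implies the smallness condition of Corollary~\ref{cor:perturb}. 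Hence (\hyperref[ConditionX1d eps]{$\mathrm{ND}_{\varepsilon/2, x_0,\sss}$}) holds for every $\sss\in B(\ttt,\eta)$. For such $\sss$ we also have $|\sss|_{\ell^1}\le d(\ray+\eta)$, so Proposition~\ref{prop:sizediff} gives, on all of $B(\ttt,\eta)$,
\[
\|\di H(\sss)^{-1}\|\le (2/\eps)^{d}2^{d}e^{dM(\ray+\eta)},\qquad \|\di^2H(\sss)\|\le 2^{2d+2}e^{2dM(\ray+\eta)}.
\]
This is the source of the constants $\eps^{-d}2^{2d}e^{dM(\ray+\eta)}$ in \eqref{s close to t}.

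\textbf{Existence, uniqueness and \eqref{s close to t}.} An Ekeland-type inversion statement gives: if $\|\di H^{-1}\|\le A$ on $B(\ttt,\eta)$ and $|y|\le \eta/A$ (up to a numerical factor), then there is a solution $\sss\in B(\ttt,\eta)$ of $H(\sss)=H(\ttt)+y$ with $|\sss-\ttt|\le 3A|y|$. Condition \eqref{very small y} is much stronger than $|y|\le \eta/(3A)$, because $\eps<1$, the exponent of $e^{M(\cdot)}$ is larger, and $\eta\ge 2^{-7d-9}e^{-Md(5\ray+4\eta)}$ by \eqref{eq:radius}. The same condition also yields the second inequality $3A|y|\le\eps^\alpha$ in \eqref{s close to t}. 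Uniqueness in the ball follows from Proposition~\ref{prop:from Ekeland Uniqueness} using the Lipschitz bound on $\di H$ coming from the $\di^2H$ estimate; equivalently, $\di H(\ttt)$ dominates the variation of $\di H$ over a radius of order $1/(A\|\di^2H\|)$, and $3A|y|$ is below that radius. The map $\ttt\mapsto\sss(\ttt;y)$ is therefore well defined. Since $H$ is $C^2$ (the flows are $C^2$ because $\cv\in C^2_b$), the implicit function theorem applied to $(\ttt,\sss)\mapsto H(\sss)-H(\ttt)-y$ shows that it is $C^2$, with
\[
\di\sss=\di H(\sss)^{-1}\di H(\ttt).
\]
Injectivity follows by applying $H$: $H(\sss(\ttt))=H(\sss(\ttt'))$ gives $H(\ttt)=H(\ttt')$. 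Local uniqueness for $H$ near $\ttt$, a consequence of the Lipschitz control on $\di H$ and of $|\ttt-\ttt'|\le|\sss-\ttt|+|\sss'-\ttt'|$ being small, then yields $\ttt=\ttt'$, so $\sss$ is a diffeomorphism onto its image.

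\textbf{Jacobian estimate \eqref{size Jacobian}.} Write
\[
\di\sss-\mathrm{Id}=\di H(\sss)^{-1}\bigl(\di H(\ttt)-\di H(\sss)\bigr).
\]
Then
\[
\|\di\sss-\mathrm{Id}\|\le A\,\sup\|\di^2H\|\,|\sss-\ttt|\le A^2\,2^{2d+2}e^{2dM(\ray+\eta)}\,3|y|,
\]
which is of order $\eps^{-2d}2^{6d+4}e^{4dM(\ray+\eta)}|y|$. For the determinant, the elementary bound $|\det(\mathrm{Id}+B)-1|\le (1+\|B\|)^d-1\le d\,2^{d}\|B\|$ (valid when $\|B\|\le1$) absorbs into the stated constant $2^{10d+11}e^{7dM(\ray+\eta)}$. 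Finally, \eqref{very small y} gives $\le\frac12\eps^\alpha$.

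\textbf{Main obstacle.} The hardest step is the global injectivity of $\ttt\mapsto\sss(\ttt;y)$ on the possibly non-convex open set $\nd_{\ray,\eps,x_0}$, and the precise tracking of the constants so that they match Propositions~\ref{prop:from Ekeland} and~\ref{prop:from Ekeland Uniqueness}. I expect the constants to close with room to spare. For injectivity, I would first try the argument above: if $\sss(\ttt)=\sss(\ttt')$, then $|\ttt-\ttt'|\le 2\eps^\alpha$-type smallness, and $H$ is injective on $B(\ttt,\eta)$ by the uniform invertibility of $\di H$ there.
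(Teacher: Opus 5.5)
Your proposal is correct and follows essentially the same route as the paper: Corollary~\ref{cor:perturb} with $\gamma=2$, $c_\gamma=4$ to propagate $\mathrm{ND}_{\eps/2}$ over $B(\ttt,\eta)$; the same constants $A_1=\eps^{-d}2^{2d}e^{dM(\ray+\eta)}$, $L_1=2^{2d+2}e^{2dM(\ray+\eta)}$, $A_1'=3A_1$ fed into Propositions~\ref{prop:from Ekeland} and~\ref{prop:from Ekeland Uniqueness}; the implicit function theorem for regularity; and the identity $\di\sss-\mathrm{Id}=\di H(\sss)^{-1}(\di H(\ttt)-\di H(\sss))$ followed by a determinant perturbation bound. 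Your injectivity argument covers a point the paper leaves implicit, and it is sound in the form of your main paragraph, where $|\ttt-\ttt'|\le 6A_1|y|$ lies well below the local uniqueness radius $2/(A_1L_1)=\eps^{d}2^{-4d-1}e^{-3dM(\ray+\eta)}$; the version in your last paragraph does not work, because $2\eps^\alpha$ always exceeds that radius and a uniform bound on $\|\di H^{-1}\|$ alone does not make $H$ injective on $B(\ttt,\eta)$.
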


\begin{proof} For $\ttt\in\nd_{\ray,\eps,x_0}$ being fixed and $y\in\mathbb R^d$ small enough, we first aim to apply the implicit function theorem to the function
\[
	F : \uuu\in B(0,\eta)\mapsto F(\uuu;\ttt,y) := H(\uuu+\ttt)-H(\ttt) -y.
\]
Notice that $F(0;\ttt,0) = H(\ttt) - H(\ttt) = 0$. Moreover, it follows from the assumption \eqref{eq:radius} that
\[
    2^{7d+9}\eta e^{M(5\vert\ttt\vert_{\ell^1}+4d\eta)}\le 2^{7d+9}\eta e^{Md(5\vert\ttt\vert+4\eta)}
    \le 2^{7d+9}\eta e^{Md(5\ray+4\eta)} = 1.
\]
Corollary \ref{cor:perturb} applied with $\gamma = 2$ and $c_{\gamma} = 4$ therefore implies that for every $\vvv\in B(\ttt,\eta)$, the non-degeneracy condition $\mathrm{(}$\hyperref[ConditionX1d eps]{$\mathrm{ND}_{\varepsilon/2, x_0,\vvv}$}$\mathrm{)}$ holds. Proposition \ref{prop:sizediff} then implies that the function $F$ is differentiable on $B(0,\eta)$ and that its derivative $\di_{\uuu}F(0;\ttt,0)$ is invertible. We can therefore apply the implicit function theorem to the function $F$ (recall that the set $\nd_{\ray,\eps,x_0}$ is open by Corollary~\ref{cor:opencond}), which shows that the equation
\begin{equation}\label{eq:equation}
	F(\uuu;\ttt',y) := H(\uuu+\ttt')-H(\ttt') -y = 0,
\end{equation}
has a unique solution $\uuu = \uuu(\ttt';y)$ close to $0$ when $\vert\ttt'-\ttt\vert$ and $y$ are small enough. Moreover, the function $\uuu$ is of class $C^2$ with respect to $\ttt'$ and $y$ (since $H$ is also of class $C^2$). 

At this stage, we have no quantitative information on the function $\ttt\mapsto\uuu(\ttt;y)$, so to compensate for this, we will use Proposition \ref{prop:from Ekeland} and Proposition \ref{prop:from Ekeland Uniqueness}. By the non-degeneracy condition $\mathrm{(}$\hyperref[ConditionX1d eps]{$\mathrm{ND}_{\varepsilon/2, x_0,\vvv}$}$\mathrm{)}$, the estimates \eqref{sizeDH} and \eqref{sizeDHinv} on $\di^2 H(\vvv )$ and $\di H(\vvv )^{-1}$  respectively and the (rough) bound
\[
    \vert\vvv\vert_{\ell^1}\leq d(\ray+\eta),\quad \vvv \in B(0,\ray+\eta),
\]
the condition $\|\di F(\uuu)^{-1}\|\leq A_1$ is realized for all $\uuu\in B(0,\eta)$, with 
\[
    A_1:= \eps^{-d} 2^{2 d} e^{dM(\ray+\eta)},
\]
while the Lipschitz condition \eqref{Lip DF} is true if we set
\[
	L_1:= 2^{2d+2} e^{2 dM(\ray+\eta)}.
\]
In the notations of Proposition~\ref{prop:from Ekeland} and Proposition~\ref{prop:from Ekeland Uniqueness}, we will take 
\begin{equation}\label{eq:c1}
    A_1' := 3A_1 = \eps^{-d}2^{2 d}3e^{dM(\ray+\eta)}, \quad
	\zeta_1: = \eps^{2d} 2^{-9 d-11} e^{-7dM(\ray+\eta)},
\end{equation}
which ensures that 
\[
	2A_1<A_1',\quad A_1'\zeta_1 = \varepsilon^d2^{-7d-11}3e^{-6dM(\ray+\eta)}<\eta,\quad 
	A_1 L_1 A_1' \zeta_1 = 2^{-3d-9}3e^{-3dM(\ray + \eta)} <1.
\]
Then, for all $y\in B(0,\zeta_1)$, the equation \eqref{eq:equation} (with $\ttt'=\ttt$) has a unique solution $\uuu(\ttt;y)\in B(0,A_1'\vert y\vert)$. Setting $\sss(\ttt;y)=\uuu(\ttt;y) + \ttt$, we have constructed a map $\ttt\mapsto\sss(\ttt;y)$ from $\nd_{\ray,\eps,x_0}$ to $B(0,\ray+\eta)$ of class $C^2$, which satisfies the estimate \eqref{s close to t} by definition \eqref{eq:c1} of the constant $A_1'$. Moreover, it follows from the assumption \eqref{very small y} and the definition \eqref{eq:radius} of $\eta$ that
\[
    \eps^{-d} 2^{2 d} 3 e^{dM(\ray+\eta)}\vert y\vert = A_1'\vert y\vert < A_1' \zeta_1 < \eta.
\]
According to \eqref{s close to t} and Corollary \ref{cor:perturb} (applied with $\gamma = 2$ and $c_{\gamma} = 4$), this proves that $\sss(\ttt;y)\in\nd_{\ray+\eta,\varepsilon/2,x_0}$. Thanks to the relation $H(\sss(\ttt;y))=H(\ttt)+y$, we also obtain the following formula for the derivative of the function $\ttt\mapsto\sss(\ttt;y)$
\[
	\di\sss(\ttt;y) = \di H(\sss(\ttt;y))^{-1}\di H(\ttt).
\]
We can also estimate
\[
	\di\sss(\ttt;y)-\mathrm{Id} = \di H(\sss(\ttt;y))^{-1}(\di H(\ttt)-\di H(\sss(\ttt;y))).
\]
The bound on $\di^2H$ in \eqref{sizeDH} with \eqref{sizeDHinv}, \eqref{s close to t} and the mean value theorem give then
\[
	\|\di\sss(\ttt;y)-\mathrm{Id}\| \leq A_1 A_1' L_1 |y|\leq \zeta_1^{-1}|y|.
\]
Due to the smallness condition \eqref{very small y}, we have $\|\di\sss(\ttt;y)-\mathrm{Id}\| \leq 1$. Therefore, combining this estimate with Corollary~2.14 in \cite{IpsenRehman2008}, which states that for all matrices $A,B\in M_n(\mathbb C)$, with $B$ non-degenerate, we have
\begin{equation}\label{det estimate}
        \vert\det(A+B) - \det(B)\vert\le\bigg[\bigg(\frac{\Vert A\Vert}{\Vert B^{-1}\Vert}+1\bigg)^n-1\bigg]\vert\det(B)\vert,
\end{equation}
and the inequality
\begin{equation}\label{a^d-1}
        \forall a\in[0,2],\quad\vert a^d-1\vert \le \vert a-1\vert\sum_{k=0}^{d-1}2^k\le 2^d\vert a-1\vert,
\end{equation}
we obtain
\[
	|\det(\di\sss(\ttt;y))-1|\leq  \left(\|\di\sss(\ttt;y)-\mathrm{Id}\|+1\right)^d -1 \leq 2^d   \|\di\sss(\ttt;y)-\mathrm{Id}\| \leq 2^d \zeta_1^{-1}|y|.
\]
This is the bound \eqref{size Jacobian} on the Jacobian of $\ttt\mapsto\sss(\ttt;y)$.
\end{proof}

\begin{remark} Thanks to Corollary \ref{cor:perturb}, given any $\gamma>1$, we could also have constructed a diffeomorphism $\sss$ taking values in $\nd_{\ray+\eta_{\gamma},\varepsilon/\gamma,x_0}$, where $\eta_{\gamma}$ is implicitly defined by
\[
    2^{7d+5}c_{\gamma}^2\eta_{\gamma}e^{Md(5\ray + 4\eta_{\gamma})} = 1,\quad\text{where $c_{\gamma}>\frac{\gamma+1}{\gamma-1}$ is arbitrary}.
\]
We made the choices $\gamma = 2$ and $c_{\gamma} = 4$ arbitrarily.
\end{remark}

\subsection{Application of the change of variable}

We come back to the equation \eqref{Theta circ d delta w}, which can be written
\[
	\Theta^d(w-\tau_y w)(x) = \mathbb{E}\bigg[\int_{\R^d}(w(H(\ttt;x))-w(H(\ttt;x)-y))K(\ttt,x)\,\mathrm d\mu(\ttt)\bigg],
\]
where $\mu$ is the finite measure given by
\[
	\int_{\R^d}\varphi(\ttt)\, \mathrm d\mu(\ttt)
	= \int_{\R^d}e^{-\vert\ttt\vert_{\ell^1}} \varphi(\ttt)\,\mathrm d\ttt.
\]
Let $ \varepsilon\in(0,1)$ and $\ray\geq1$ that will be chosen later. For a given $x_0\in\R^d$, let $\nd_{\ray,\eps,x_0}$ be the set defined by \eqref{eq:setnd}. Let us consider the following new function
\[
	\Theta^d_{\ray,\eps}w(x_0) := \mathbb E\bigg[\int_{\nd_{\ray,\eps,x_0}}w(H(\ttt;x_0))K(\ttt,x_0)\,\mathrm d\mu(\ttt)\bigg].
\]
Denoting by $\nd_{\ray,\eps,x_0}^c$ the complementary set $\R^d\setminus\nd_{\ray,\eps,x_0}$, we have by the Cauchy-Schwarz inequality and the upper bound in \eqref{K bound} that
\begin{align*}
	\vert(\Theta^d w-\Theta^d_{\ray,\eps}w)(x_0)\vert^2 
	& \le\mathbb E\big[\mu(\nd_{\ray,\eps,x_0}^c)\big]
	\,\mathbb E\bigg[\int_{\mathbb R^d}\vert w(H(\ttt;x_0))K(\ttt,x_0)\vert^2\,\mathrm d\mu(\ttt)\bigg] \\
	& \le \frac{1}{(2-\lambda)^{2d}}\sup_{x\in\mathbb R^d}\mathbb E\big[\mu(\nd_{\ray,\eps,x}^c)\big]
	\,\mathbb E\bigg[\int_{\mathbb R^d}e^{2\lambda \vert\ttt\vert_{\ell^1}}\vert w(H(\ttt;x_0))\vert^2\,\mathrm d\mu(\ttt)\bigg]. 
\end{align*}
Then, by integrating with respect to the variable $x_0\in\mathbb R^d$, performing the change of variable $x_0\mapsto H(\ttt;x_0)$, whose Jacobian can be bounded by $e^{\lambda d\vert\ttt\vert_{\ell^1}}$ thanks to the estimate \eqref{determinantII}, and employing the smallness condition \eqref{reduction M lambda} on the divergence of $\cv$, we obtain the following bound
\begin{equation}\label{dist Theta to Theta main part 0}
	\big\Vert\Theta^d w-\Theta^d_{\ray,\eps}w\big\Vert_{L^2_x}\le C(\lambda,d)\big(\sup_{x\in\R^d}\mathbb E\big[\mu(\nd_{\ray,\eps,x}^c)\big]\big)^{1/2} \Vert w\Vert_{L^2_x}.
\end{equation}
We need a bound on the $\mu$-volume of the set $\nd_{\ray,\eps,x_0}^c$.

\begin{lemma}\label{lem:vol} The following estimate holds
\begin{equation}\label{bad set R eps small}
	\mathbb E\big[\mu(\nd_{\ray,\eps,x_0}^c)\big]\leq C(\cv,d)\big(\ray^{d-1}e^{-\ray}+\eps^{\alpha}\big).
\end{equation}
\end{lemma}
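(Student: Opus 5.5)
The complement splits as $\nd_{\ray,\eps,x_0}^c\subset B(0,\ray)^c\cup\big(B(0,\ray)\cap\{\ttt : \eqref{ConditionX1d eps}\text{ fails}\}\big)$. I will bound the $\mu$-mass of the two pieces separately.

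\textbf{Tail piece.} Since $|\ttt|_{\ell^1}\ge|\ttt|$, we have
\[
\mu(B(0,\ray)^c)\le\int_{|\ttt|\ge\ray}e^{-|\ttt|}\,\mathrm d\ttt=|S^{d-1}|\int_\ray^\infty r^{d-1}e^{-r}\,\mathrm dr .
\]
For $\ray\ge1$ this is at most $C(d)\ray^{d-1}e^{-\ray}$, for instance by integrating by parts $d-1$ times. This bound is deterministic.

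\textbf{Degenerate piece.} By a union bound over $i\in\{1,\dots,d\}$, failure of \eqref{ConditionX1d eps} means that $\dist(\cv^{(i)}(x_{t_{i-1}}),T_{t_{i-1}})\le\eps$ for some $i$. By Fubini,
\[
\E\big[\mu(\{\ttt:\text{\eqref{ConditionX1d eps} fails}\})\big]\le\sum_{i=1}^d\int_{\R^d}e^{-|\ttt|_{\ell^1}}\,\PP\big(\dist(\cv^{(i)}(x_{t_{i-1}}),T_{t_{i-1}})\le\eps\big)\,\mathrm d\ttt .
\]
Fix $\ttt$ and $i$. The point $x_{t_{i-1}}=H^{(i-1)}(\ttt_{i-1})$ and the subspace $T_{t_{i-1}}$ are functions of $\cv^{(1)},\dots,\cv^{(i-1)}$ and $\ttt_{i-1}$ only. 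The subspace has dimension at most $i-1<d$: it is the image of $\di H^{(i-1)}(\ttt_{i-1})$, which makes sense even when the earlier non-degeneracy conditions fail. Since $\cv^{(i)}$ is independent of $(\cv^{(1)},\dots,\cv^{(i-1)})$, I condition on the latter and apply \eqref{nd-a-dist} at the frozen point $x=x_{t_{i-1}}$ with the frozen subspace $V=T_{t_{i-1}}$. This gives the conditional bound
\[
\PP\big(\dist(\cv^{(i)}(x_{t_{i-1}}),T_{t_{i-1}})\le\eps\;\big|\;\cv^{(1)},\dots,\cv^{(i-1)}\big)\le C\eps^\alpha .
\]
The non-strict inequality is handled by applying \eqref{nd-a-dist} with $2\eps$, or by continuity, at the cost of a factor $2^\alpha$; if $2\eps\ge1$ the bound is trivial. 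Taking expectations and integrating against $e^{-|\ttt|_{\ell^1}}$, which has total mass $2^d$, gives at most $d\,2^{d+\alpha}C\eps^\alpha$. The constant $C$ here is the rescaled one, $C\Lambda^\alpha$.

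\textbf{Main obstacle.} The delicate point is the measurability/independence step used to freeze $x$ and $V$. I will set it up with regular conditional probabilities on the product structure $\Omega^d$, or directly as an iterated integral over independent copies. I also need $(\omega,\ttt)\mapsto\dist(\cv^{(i)}(x_{t_{i-1}}),T_{t_{i-1}})$ to be jointly measurable so that Fubini applies. When $\di H^{(i-1)}$ has full rank $i-1$, the distance is a continuous function of the columns of $\di H^{(i-1)}(\ttt_{i-1})$, which are continuous in $\ttt$. The rank-deficient set should be treated by working with the span of the columns, which is still measurable. Alternatively, the rank-deficient set is contained in the event that an earlier condition fails, so it is already covered by the union over indices.

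Adding the two pieces gives \eqref{bad set R eps small}.
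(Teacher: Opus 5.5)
Your proposal is correct and follows the paper's proof. It uses the same split into the tail $B(0,\ray)^c$, bounded by $C(d)\ray^{d-1}e^{-\ray}$, plus a union bound over $i$. In each term, the independence of $\cv^{(i)}$ from the earlier fields lets you apply \eqref{nd-a-dist} at a frozen point and subspace, and integrating against $\mu$ (total mass $2^d$) gives $C\eps^\alpha$.

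Your extra care also closes points the paper passes over silently. You handle the non-strict inequality $\dist\le\eps$, for which the paper writes $<\eps$. You define $T_{t_{i-1}}$ as the image of $\di H^{(i-1)}$ when an earlier condition fails. And you address measurability for Fubini.
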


\begin{proof} By the definition \eqref{eq:setnd} of the set $\nd_{\ray,\eps,x_0}$ and of the condition \eqref{ConditionX1d eps}, we first have
\[
	\mathbb E\big[\mu(\nd_{\ray,\eps,x_0}^c)\big]\leq \int_{B(0,\ray)^c}e^{-\vert\ttt\vert_{\ell^1}}\,\mathrm d\ttt
	+ \sum_{i=1}^d\mathbb E\bigg[\int_{\R^d}\mathbbm{1}_{\{\dist(\cv^{(i)}(x_{t_{i-1}}),T_{t_{i-1}})<\eps \}}\,\mathrm d\mu(\ttt)\bigg],
\]
where we set $t_0 = 0$, where $x_{t_0},\ldots,x_{t_d}$ is the chain given by \eqref{chainpoints}, and where $T_{t_0},\ldots, T_{t_{d-1}}$ are the tangent spaces defined by \eqref{eq:tangentspaces}. There are therefore two terms to consider. On one hand, we roughly bound the above Gaussian integral as follows
\[
	\int_{B(0,\ray)^c}e^{-\vert\ttt\vert_{\ell^1}}\,\mathrm d\ttt\le C(d)\ray^{d-1}e^{-\ray}.
\]
On the other hand, we have the dependencies
\[
	x_{t_1}=x_{t_1}(t_1,\omega_1),\quad
	x_{t_2}=x_{t_2}(t_1,t_2,\omega_1,\omega_2),\quad\ldots,
\]
so, by independence of the vector fields $\cv^{(i)}$, and by the non-degeneracy condition \eqref{nd-a-dist}, we get that for every $i=0,\ldots,d-1$,
\[
	\mathbb E\bigg[\int_{\R^d}\mathbbm{1}_{\{\dist(\cv^{(i)}(x_{t_{i-1}}),T_{t_{i-1}})<\eps \}}\,\mathrm d\mu(\ttt)\bigg]
	= \int_{\mathbb R^d}\mathbb P(\dist(\cv^{(i)}(x_{t_{i-1}}),T_{t_{i-1}})<\eps)\,\mathrm d\mu(\ttt)
	\le C(\cv,d)\varepsilon^{\alpha}.
\]
Gathering the last three estimates leads to \eqref{bad set R eps small}.
\end{proof}

\noindent By combining the estimate \eqref{dist Theta to Theta main part 0} and Lemma \ref{lem:vol}, we therefore get that
\begin{equation}\label{dist Theta to Theta main part}
	\big\Vert\Theta^d w-\Theta^d_{\ray,\eps}w\big\Vert_{L^2_x}\leq C(\cv,d)\big(\ray^{d-1}e^{-\ray}+\eps^{\alpha}\big)^{1/2}\Vert w\Vert_{L^2_x}.
\end{equation}
Then, by using this estimate with the functions $\tau_yw$, we obtain
\begin{equation}\label{step1}
	\big\|\Theta^d(w-\tau_yw)\big\|_{L^2_x}\le C(\cv,d)\big(\ray^{d-1}e^{-\ray}+\eps^{\alpha}\big)^{1/2}\Vert w\Vert_{L^2_x}
	+ \big\|\Theta^d w-\Theta^d_{\ray,\varepsilon}\tau_yw\big\|_{L^2_x}.
\end{equation}
In the quantity
\[
	\Theta^d_{\ray,\eps}\tau_y w(x_0) = \mathbb E\bigg[\int_{\nd_{\ray,\eps,x_0}} w(H(\ttt;x_0)-y)K(\ttt,x_0)\,\mathrm d\mu(\ttt)\bigg],
\]
we can invoke Proposition \ref{prop:CVAR final} and do the change of variable $H(\sss)=H(\ttt)-y$  when $y$ satisfies the smallness condition \eqref{very small y}, to obtain
\begin{equation}\label{Theta main part tau y after cvar}
	\Theta^d_{\ray,\eps}\tau_y w(x_0) = \mathbb E\bigg[\int_{\snd_{\ray,\eps,y,x_0}} w(H(\sss;x_0))K(\ttt,x_0) \big(e^{|\sss|_{\ell^1}-|\ttt|_{\ell^1}}\big)\frac{1}{\det(\mathrm d\sss)}\,\mathrm d\mu(\sss)\bigg],
\end{equation}
where
\begin{equation}\label{modified good set}
	\snd_{\ray,\eps,y,x_0} : = \sss(\nd_{\ray,\varepsilon,x_0};y).
\end{equation}
In \eqref{Theta main part tau y after cvar}, we do a standard abuse of notations, and use the variable $\ttt$ to denote the quantity $\ttt(\sss;y)$, obtained by inversion in $\ttt$ of the relation $H(\sss)=H(\ttt)-y$. 
Our aim is now to justify (and quantify) the two following approximations:
\begin{equation}
    \Theta^d_{\ray,\eps}\tau_y w(x_0) \simeq \mathbb E\bigg[\int_{\snd_{\ray,\eps,y,x_0}}w(H(\sss;x_0))K(\sss,x_0)\,\mathrm d\mu(\sss)\bigg]=:\Theta^d_{\ray,\eps ,y} w(x_0)\label{approx Theta 1},
\end{equation}
and then
\begin{equation}
    \Theta^d_{\ray,\eps ,y} w \simeq \Theta^d w\label{approx Theta 2}.
\end{equation}
First, regarding \eqref{approx Theta 1}, we have
\begin{equation}\label{intermediate Theta}
	\big\|\Theta^d_{\ray,\eps ,y} w-\Theta^d_{\ray,\eps}\tau_y w\big\|_{L^2_x}
	\leq C(\cv,d) \eps^{-2d} e^{(\lambda d+7Md)(\ray+\eta)} \vert y\vert \Vert w\Vert_{L^2_x}.
\end{equation}
Indeed, we can apply the estimates \eqref{s close to t}, \eqref{size Jacobian}, \eqref{K bound} and \eqref{dK bound} to get
\begin{align*}
	&\ \bigg\vert K(\sss,x)  -\big(e^{|\sss|_{\ell^1}-|\ttt|_{\ell^1}}\big)\frac{K(\ttt,x)  }{\det(\di\sss)}\bigg\vert \\
	\leq &\  \frac{\vert\det(\di\sss)-1\vert K(\sss,x)+ \vert K(\sss,x) - K(\ttt,x)\vert + K(\ttt,x)\vert1- e^{|\sss|_{\ell^1}-|\ttt|_{\ell^1}}\vert}{\vert\det(\di\sss)\vert} \\
	\leq &\ C(\cv,d)\eps^{-2d} e^{(\lambda d+7Md)(\ray+\eta)} \vert y\vert,
\end{align*}
almost surely. By using Jensen's inequality for $L^2_x$-valued functions, we therefore obtain
\[
	\big\|\Theta^d_{\ray,\eps ,y} w-\Theta^d_{\ray,\eps}\tau_y w\big\|_{L^2_x}
	\le C(\cv,d) \eps^{-2d} e^{(\lambda d+7Md)(\ray+\eta)} \vert y\vert
	\,\mathbb E\bigg[\int_{\mathbb R^d}\Vert w(H(\sss;\cdot))\Vert_{L^2_x}\,\mathrm d\mu(\sss)\bigg].
\]
Performing the change of variable $x\mapsto H(\sss;x)$ in the $L^2_x$ norm, as while proving \eqref{dist Theta to Theta main part 0}, then leads to \eqref{intermediate Theta}. The justification of \eqref{approx Theta 2} is relatively similar to the proof of \eqref{dist Theta to Theta main part}. Setting $\snd_{\ray,\eps,y,x_0}^c = \R^d\setminus\snd_{\ray,\eps,y,x_0}$ and repeating the arguments used to derive \eqref{dist Theta to Theta main part 0}, we have
\[
	\big\Vert\Theta^d w-\Theta^d_{\ray,\eps,y}w\big\Vert_{L^2_x}\leq\big(\sup_{x\in\R^d}\mathbb E\big[\mu(\snd_{\ray,\eps,y,x}^c)\big]\big)^{1/2}\Vert w\Vert_{L^2_x}.
\]
We now need to estimate the expectation of the measure $\mu(\snd_{\ray,\eps,y,x_0}^c)$.

\begin{lemma}
Under the smallness assumption \eqref{very small y}, we have
\[
	\mathbb E\big[\mu(\snd_{\ray,\eps,y,x_0}^c)\big]\leq C(\cv,d) \big( \ray^{d-1}e^{-\ray}+\eps^{\alpha}\big).
\]
\end{lemma}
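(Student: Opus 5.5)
Since $\snd_{\ray,\eps,y,x_0}=\sss(\nd_{\ray,\eps,x_0};y)$, the idea is to show that the complement of $\snd_{\ray,\eps,y,x_0}$ is contained in a slightly enlarged version of the complement of a good set with slightly modified parameters. Then we reuse Lemma~\ref{lem:vol}. Concretely, we aim for an inclusion of the form
\[
	\nd_{\ray-1,2\eps,x_0}\subset\snd_{\ray,\eps,y,x_0}.
\]
It would give $\mathbb E[\mu(\snd^c_{\ray,\eps,y,x_0})]\le\mathbb E[\mu(\nd^c_{\ray-1,2\eps,x_0})]\le C(\cv,d)((\ray-1)^{d-1}e^{-\ray+1}+2^\alpha\eps^\alpha)$, which is of the required form after enlarging the constant.

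To prove the inclusion, take $\sss\in\nd_{\ray-1,2\eps,x_0}$. We must produce $\ttt\in\nd_{\ray,\eps,x_0}$ with $H(\ttt)=H(\sss)-y$.

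\textbf{Step 1: inverse change of variable.} Apply Proposition~\ref{prop:CVAR final} (or directly the Ekeland-type Propositions~\ref{prop:from Ekeland} and~\ref{prop:from Ekeland Uniqueness}) at the point $\sss$ with increment $-y$. The smallness condition \eqref{very small y}, written with $\eps$, is stronger than the one written with $2\eps$, so it applies. This yields $\ttt=\sss(\sss;-y)$ with $H(\ttt)=H(\sss)-y$ and
\[
	|\ttt-\sss|\le(2\eps)^{-d}2^{2d}3e^{dM(\ray+\eta)}|y|.
\]
By \eqref{very small y}, the right-hand side is at most $\eta$. It is also at most $\eps^\alpha\le1$, so $|\ttt|<\ray$.

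\textbf{Step 2: transfer of non-degeneracy.} Since $|\ttt-\sss|<\eta$, Corollary~\ref{cor:perturb} with $\gamma=2$, $c_\gamma=4$ shows that \hyperref[ConditionX1d eps]{$(\mathrm{ND}_{\eps,x_0,\ttt})$} holds. Hence $\ttt\in\nd_{\ray,\eps,x_0}$.

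\textbf{Step 3: identification with the forward map.} It remains to check that $\sss(\ttt;y)=\sss$, so that $\sss$ really lies in the image $\sss(\nd_{\ray,\eps,x_0};y)$. This follows from the uniqueness part (Proposition~\ref{prop:from Ekeland Uniqueness}): both $\sss(\ttt;y)$ and $\sss$ solve $H(\cdot)=H(\ttt)+y$ in the ball $B(\ttt,A_1'|y|)$, where $H$ is injective.

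The main obstacle is the bookkeeping of constants in Steps~1 and~3. The radius $\eta$ must be defined with $\ray$, not with $\ray-1$, and the constants $A_1$, $A_1'$ change when $\eps$ is replaced by $2\eps$ or $\eps/2$. We have to check that \eqref{very small y} absorbs all of these with the factors $2^{10d+12}$ and $\eps^{-2d-\alpha}$. If this fails by a constant factor, I would instead use $\nd_{\ray-1,4\eps,x_0}$. That only changes $C(\cv,d)$, through the factor $4^\alpha$.
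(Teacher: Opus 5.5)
Your proof is correct, and it takes a genuinely different route from the paper's.

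\textbf{The paper's route.} It writes $\mu(\snd^c_{\ray,\eps,y,x_0})=2^d-\mu(\snd_{\ray,\eps,y,x_0})$ and changes variables $\sss=\sss(\ttt;y)$ inside $\mu(\snd_{\ray,\eps,y,x_0})$. Using the Jacobian bound \eqref{size Jacobian} together with $|\sss-\ttt|\le\eps^\alpha$, it gets
\[
\mu(\snd_{\ray,\eps,y,x_0})\ge(1-\eps^\alpha)e^{-d\eps^\alpha}\mu(\nd_{\ray,\eps,x_0}),
\]
and then applies Lemma~\ref{lem:vol} with unchanged parameters. This needs the second-order information behind \eqref{size Jacobian}. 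It also needs, implicitly, the injectivity of $\ttt\mapsto\sss(\ttt;y)$ on $\nd_{\ray,\eps,x_0}$: without it, the change of variables only bounds $\mu(\snd)$ from above, which is the wrong direction. The paper takes this injectivity from the word ``diffeomorphism'' in Proposition~\ref{prop:CVAR final}.

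\textbf{Your route.} The inclusion $\nd_{\ray-1,2\eps,x_0}\subset\snd_{\ray,\eps,y,x_0}$ only requires surjectivity onto a slightly smaller good set. You get it from three ingredients:
\begin{itemize}
\item the first-order estimate \eqref{s close to t};
\item Corollary~\ref{cor:perturb};
\item the local uniqueness of Proposition~\ref{prop:from Ekeland Uniqueness}.
\end{itemize}
No Jacobian bound and no global injectivity are needed. The price is applying Lemma~\ref{lem:vol} with shifted parameters, which only changes constants.

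\textbf{The bookkeeping closes with $2\eps$.} Write $A_1'$ and $\zeta_1$ for the constants at level $\eps$. By \eqref{very small y},
\[
|\ttt-\sss|\le 2^{-d}A_1'|y|\le 3\cdot2^{-9d-12}\eps^{d+\alpha}e^{-7dM(\ray+\eta)}.
\]
This is below both $\eta$ from \eqref{eq:radius} and $\eps^\alpha$. Moreover $|y|<\zeta_1$ gives $2^{-d}A_1'|y|<A_1'\zeta_1$, so $\sss$ lies in the uniqueness ball around $\ttt$. Finally, $|\sss|_{\ell^1}\le d\ray$ makes Corollary~\ref{cor:perturb} (with $c_\gamma=4$) applicable with that same $\eta$. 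The $4\eps$ fallback is therefore unnecessary.

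\textbf{Two small repairs in the last step.}
\begin{itemize}
\item Use the tail bound in the form $\int_{B(0,R)^c}e^{-|\ttt|_{\ell^1}}\,\mathrm d\ttt\le C(d)(1+R)^{d-1}e^{-R}$. The form $R^{d-1}e^{-R}$ fails as $R\to0$ when $d\ge2$, and here $R=\ray-1$ can be near $0$. With the corrected form, $R=\ray-1$ gives $C(d)\ray^{d-1}e^{1-\ray}$.
\item If $2\eps\ge1$, condition \eqref{nd-a} is not available at level $2\eps$. In that case the estimate is trivial, since $\mu\le 2^d\le 2^d(2\eps)^\alpha$.
\end{itemize}
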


\begin{proof} First of all, notice that we have almost surely
\[
	\mu(\snd_{\ray,\eps,y,x_0}^c) = \mu(\mathbb R^d) - \mu(\snd_{\ray,\eps,y,x_0}) = 2^d - \mu(\snd_{\ray,\eps,y,x_0}).
\]
Moreover, by definition \eqref{modified good set} of the set $\snd_{\ray,\eps,y,x_0}$, we get that
\[
	\mu(\snd_{\ray,\eps,y,x_0}) = \mu(\sss(\nd_{\ray,\eps,x_0})) = \int_{\sss(\nd_{\ray,\eps,x_0})}\,\mathrm d\mu(\sss) = \int_{\sss(\nd_{\ray,\eps,x_0})}e^{-\vert\sss\vert_{\ell^1}}\,\mathrm d\sss.
\]
Recall from \eqref{s close to t} and \eqref{size Jacobian} that $\det(\di\sss(\ttt))\geq1-\varepsilon^{\alpha}$ and $\vert\sss-\ttt\vert\le\varepsilon^{\alpha}$. A change of variable and the (rough) bound $\vert\sss-\ttt\vert_{\ell^1}\le d\vert\sss-\ttt\vert$ therefore imply that almost surely
\[
	\mu(\snd_{\ray,\eps,y,x_0}) = \int_{\nd_{\ray,\eps,x_0}}e^{\vert\ttt\vert_{\ell^1}-\vert\sss\vert_{\ell^1}}(\det\di\sss(\ttt))\,\mathrm d\mu(\ttt)
	\geq(1-\varepsilon^{\alpha})e^{-d \varepsilon^{\alpha}}\mu(\nd_{\ray,\eps,x_0}).
\]
Finally, using the above estimates and Lemma \ref{lem:vol}, we obtain
\begin{align*}
	\mathbb E\big[\mu(\snd_{\ray,\eps,y,x_0}^c)\big] & \le 2^d - (1-\varepsilon^{\alpha})e^{-d \varepsilon^{\alpha}}\big(2^d - \mathbb E\big[\mu(\nd_{\ray,\eps,x_0}^c)\big]\big) \\
	& \le 2^d\big(1-e^{-d \varepsilon^{\alpha}} + \varepsilon^{\alpha}e^{-d \varepsilon^{\alpha}}\big) + C(\cv,d)(1-\varepsilon^{\alpha})e^{-d \varepsilon^{\alpha}}\big(\ray^{d-1}e^{-\ray}+\eps^{\alpha}\big).
\end{align*}
Since
\[ 
	1-e^{-d \varepsilon^{\alpha}} +
	\varepsilon^{\alpha}e^{-d \varepsilon^{\alpha}}\le (d+1) \varepsilon^{\alpha}\quad
	\text{and}\quad(1-\varepsilon^{\alpha})e^{-d \varepsilon^{\alpha}}\le 1,
\] 
we obtain the expected estimate.
\end{proof}

\noindent We therefore obtain
\begin{equation}\label{dist Theta to Theta hat}
	\big\|\Theta^d w-\Theta^d_{\ray,\eps,y}w\big\|_{L^2_x}
	\leq C(\cv,d) \big(\ray^{d-1}e^{-\ray}+\eps^{\alpha}\big)^{1/2}\|w\|_{L^2_x},
\end{equation}
and we can gather the various estimates \eqref{step1}, \eqref{intermediate Theta} and \eqref{dist Theta to Theta hat} to get
\[
	\sup_{|y|\le r}\|\Theta^d (w-\tau_y w)\|_{L^2_x}\leq C(\cv,d)\big(\eps^{-2d} e^{(\lambda d+7Md)(\ray+\eta)} r+ \ray^{(d-1)/2}e^{-\ray/2}+\eps^{\alpha/2} \big) \|w\|_{L^2_x}.
\]

\subsection{Optimisation}

Using the bound $\lambda\leq M$, our task is now to optimize the quantity 
\begin{equation}\label{to be optimized}
    \eps^{-2d} e^{8Md(\ray+\eta)} r+ \ray^{(d-1)/2}e^{-\ray/2}+\eps^{\alpha/2},
\end{equation}
by an accurate choice of the parameters $\eps$ and $\ray$. Simultaneously, we wish to satisfy the constraint
\begin{equation}\label{constraint eps rho}
    \eps^{-2d-\alpha} 2^{10d+12} e^{8dM(\ray+\eta)}r \leq 1,
\end{equation}
which guarantees \eqref{very small y} for $y\in B(0,r)$. Recall that $\eta$ depends on $\ray$ via the equation \eqref{eq:radius}. Taking \eqref{constraint eps rho} into account, we may revise our objectives and replace $\eps^{-2d} e^{8Md(\ray+\eta)} r$ in \eqref{to be optimized} by $\eps^{\alpha}$, and thus consider the optimization of the quantity
\begin{equation}\label{to be optimized 2}
    \ray^{(d-1)/2}e^{-\ray/2}+\eps^{\alpha/2},
\end{equation}
subject to \eqref{constraint eps rho}. Suppose that, besides \eqref{constraint eps rho}, $\ray$ satisfies $\ray\geq 1$. Then, \eqref{eq:radius} implies $\eta\leq 1\leq\ray$ and \eqref{constraint eps rho} will follow from the bound
\begin{equation}\label{constraint eps rho 2}
    \eps^{-2d-\alpha} 2^{10d+12} e^{16dM\ray}r \leq 1.
\end{equation}
Considering the behavior of the function of $(\eps,\ray)$ given by \eqref{to be optimized 2} on the domain where \eqref{constraint eps rho 2} and $\ray\geq1$ are satisfied, we may choose $(\eps,\ray)$ such that \eqref{constraint eps rho 2} is saturated. Then there remains to consider the expression
\[
    \ray^{(d-1)/2}e^{-\ray/2}+e^{\frac{8\alpha d M}{2d+\alpha}\ray}r^{\frac{\alpha}{4d+2\alpha}}.
\]
We choose $\ray$ to ensure the balance $e^{-\ray/2}=e^{\frac{8\alpha d M}{2d+\alpha}\ray}r^{\frac{\alpha}{4d+2\alpha}}$, that is to say (assuming $r<1$)
\begin{equation}\label{ray function of r}
    \ray=\frac{\alpha}{2d+\alpha+16\alpha d M}|\ln(r)|.
\end{equation}
This leads to the bound
\begin{equation}\label{key estim Theta pre2}
	\sup_{|y|\le r}\|\Theta^d (w-\tau_y w)\|_{L^2_x}\leq C(a,d) r^{2d\bar\theta}|\ln(r)|^{(d-1)/2} \|w\|_{L^2_x},
\end{equation}
with
\begin{equation}\label{bar theta}
    \bar\theta(\alpha,d,M)=\frac{1}{4d}\frac{\alpha}{2d+\alpha+16\alpha d M}.
\end{equation}
Since $M\leq 1$, the radius $\ray$ given by \eqref{ray function of r} satisfies $\ray\geq 1$ if 
\[
    r<r_0(\alpha,d):=\exp\left(-\frac{\alpha}{2d+\alpha+16\alpha d}\right).
\]
Let $\theta$ be given such that
\[
    \theta<\delta(2,\alpha,d)=\frac1{4d}\frac{1}{1+\frac{2d}{\alpha}}.
\]
We invoke again (see the final step of Section~\ref{subsec:some reductions}) the use of the transformation \eqref{eq:modified eq} with $\Lambda$ large (and we recall again that this transformation modifies the value of the constant $C$ in the non-degeneracy condition \eqref{nd-a}, so influences directly the value of the constant $C(\cv,d)$ in \eqref{key estim Theta pre2}). More precisely, we choose $\Lambda=\Lambda(M,\theta)$ to ensure 
\begin{equation}\label{theta VS bar theta}
    \theta<\bar\theta<\delta(2,\alpha,d),
\end{equation}
with $\bar\theta$ defined in \eqref{bar theta} (note well that, in \eqref{theta VS bar theta}, $\bar\theta$ now depends on $d$, $\alpha$ and $\theta$). We then use the fact that the quantity
\[
\sup_{r<r_0(\alpha,d)} r^{2d(\bar\theta-\theta)}|\ln(r)|^{(d-1)/2}
\]
is finite, and depends on $\alpha$, $d$, $\theta$ to deduce from \eqref{eq:intereq}, \eqref{theta by higher powers}, \eqref{key estim Theta 2} and \eqref{key estim Theta pre2} the bound 
\[
	\forall r\in(0,r_0], \quad\sup_{|y| \le r}\|\tau_y\varrho-\varrho\|_{L^2_x}\leq \kappa \|h\|_{L^2_{\omega,x}} r^{\theta},
\]
as displayed in \eqref{AV-lemII} (recall that the function $h$ is given by \eqref{eq:functionh}), with a constant $\kappa$ depending on $\Vert\divv_x(\cv)\Vert_{L^{\infty}}$, $M$, $C$, $\alpha$, $p$, $\theta$ and $d$.

\appendix

\section{The stationary transport equation}\label{sec:stationary transport}

In this first appendix, we study the existence and uniqueness of solutions of the following deterministic stationary transport equation
\begin{equation}\label{stationary transport c}
	f(x)-\cv(x)\cdot\nabla_xf(x) = g(x),\quad x\in\mathbb R^d,
\end{equation}
where $\cv\in C^1_b(\R^d;\R^d)$ is a fixed deterministic vector field.

\subsection{Strong solutions}

First of all, we are interested in the existence and uniqueness of strong solutions of the equation \eqref{stationary transport c}, \textit{i.e.} to the functions $f\in C^1(\mathbb R^d)$ satisfying \eqref{stationary transport c} for every $x\in \mathbb R^d$.

\begin{proposition}[Uniqueness]\label{prop:strongsol} Let $p\in[1,+\infty]$ and $g\in L^p(\mathbb R^d)\cap C^0(\mathbb R^d)$. Assume that the following smallness condition holds
\begin{equation}\label{eq:smallstronguniq}
	\Vert\divv(\cv)\Vert_{L^{\infty}(\mathbb R^d)}<p.
\end{equation}
Then, if the function $f\in L^p(\mathbb R^d)\cap C^1(\mathbb R^d)$ is a strong solution of the equation \eqref{stationary transport c}, then $f$ is necessarily given by the following formula
\begin{equation}\label{eq:resolventestimate}
	f = \int_0^\infty e^{-t}g\circ\Phi_t\, \di t,
\end{equation}
where $(\Phi_t)_{t\in\mathbb R}$ stands for the flow associated to the vector field $\cv$. Moreover, the function $f$ satisfies the following bound
\begin{equation}\label{eq:contraction}
	\|f\|_{L^p(\R^d)}\leq \frac{p}{p-\Vert\divv(a)\Vert_{L^{\infty}(\mathbb R^d)}}\|g\|_{L^p(\R^d)}.
\end{equation}
\end{proposition}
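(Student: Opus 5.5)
The plan is to establish the contraction bound \eqref{eq:contraction} first for every strong solution, and then to identify $f$ with the right-hand side of \eqref{eq:resolventestimate} through a uniqueness argument. Put $\lambda:=\Vert\divv(\cv)\Vert_{L^{\infty}}<p$. Since $\cv\in C^1_b$, the flow $(\Phi_t)_{t\in\R}$ is a global $C^1$ flow of diffeomorphisms, and by Liouville's formula its Jacobian satisfies $e^{-\lambda|t|}\le J\Phi_t\le e^{\lambda|t|}$.

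\textbf{Step 1: the formula defines an $L^p$ solution.} Write $F:=\int_0^\infty e^{-t}g\circ\Phi_t\,\dt$. Changing variables $y=\Phi_t(x)$ gives
\[
\|g\circ\Phi_t\|_{L^p}\le e^{\lambda t/p}\|g\|_{L^p}.
\]
Minkowski's inequality then yields $\|F\|_{L^p}\le \int_0^\infty e^{-t(1-\lambda/p)}\,\dt\,\|g\|_{L^p}=\frac{p}{p-\lambda}\|g\|_{L^p}$, and this is finite precisely because $\lambda<p$. Moreover $F$ is continuous. Along a trajectory, $F(\Phi_s(x))=e^{s}\int_s^\infty e^{-t}g(\Phi_t(x))\,\dt$, so $\frac{\di}{\ds}\big|_{s=0}F(\Phi_s(x))=F(x)-g(x)$. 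In other words $F$ solves \eqref{stationary transport c} in the sense of derivatives along characteristics, which coincides with the strong sense whenever $F\in C^1$.

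\textbf{Step 2: uniqueness, the main step.} Let $f\in L^p\cap C^1$ be a strong solution. Along characteristics set $\phi(s):=f(\Phi_s(x))$. The equation gives $\phi'(s)=\cv\cdot\nabla f(\Phi_s(x))=\phi(s)-g(\Phi_s(x))$, and therefore $\frac{\di}{\ds}\big(e^{-s}\phi(s)\big)=-e^{-s}g(\Phi_s(x))$. Integrating from $0$ to $S$ gives
\[
f(x)=\int_0^S e^{-t}g(\Phi_t(x))\,\dt+e^{-S}f(\Phi_S(x)).
\]
The remaining task, which I expect to be the main obstacle, is to show that the boundary term vanishes. Pointwise control is not available, since $f$ is only known to lie in $L^p$. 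Instead I will show that it vanishes in $L^p$ norm, using the same Jacobian bound as in Step 1:
\[
\|e^{-S}f\circ\Phi_S\|_{L^p}\le e^{-S(1-\lambda/p)}\|f\|_{L^p}\to0 \quad (S\to\infty).
\]
This requires $p<\infty$. For $p=\infty$, condition \eqref{eq:smallstronguniq} is automatic, and the bound $e^{-S}\|f\|_{L^\infty}\to 0$ is immediate.

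\textbf{Step 3: identification and conclusion.} The truncated integrals converge in $L^p$ to $F$, again by Minkowski's inequality as in Step 1. Hence $f=F$ almost everywhere. Both sides are continuous, so $f=F$ everywhere, which is \eqref{eq:resolventestimate}. The bound \eqref{eq:contraction} then follows from the estimate obtained in Step 1.
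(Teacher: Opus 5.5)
Your proposal is correct and follows essentially the paper's proof. Both integrate along characteristics to obtain $f=\int_0^S e^{-t}g\circ\Phi_t\,\dt+e^{-S}f\circ\Phi_S$, then kill the boundary term in $L^p$ via Liouville's bound $\|f\circ\Phi_S\|_{L^p}\le e^{\lambda S/p}\|f\|_{L^p}$; the only inessential difference is that the paper (localizing to balls $B(0,r)$) reads the contraction bound directly off this identity, while you bound $F$ by Minkowski's inequality and then identify $f=F$.

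One caveat, shared with the paper: for unbounded $g$ the integral defining $F$ need not converge at every point. For instance, take $\cv=e_1$, $d\ge 2$, and $f$ a sum of smooth bumps of height $e^n$ centred at $ne_1$ that are very thin in the directions orthogonal to $e_1$; then $\int_0^S e^{-t}g(te_1)\,\dt=f(0)-e^{-S}f(Se_1)$ keeps oscillating as $S\to\infty$. So your final claim that $F$ is continuous and $f=F$ everywhere is unjustified and should be dropped. The formula should be read as an identity in $L^p$, i.e.\ almost everywhere, which is exactly what your Steps 2--3 establish.
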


\begin{proof} Let $g\in L^p(\mathbb R^d)\cap C^0(\mathbb R^d)$ and assume that $f\in L^p(\mathbb R^d)\cap C^1(\mathbb R^d)$ is solution of the transport equation \eqref{stationary transport c}. We divide the proof of Proposition \ref{prop:strongsol} in three steps.

\medskip

\noindent\textbf{$\triangleright$ Step 1.1: Computations.} We aim at obtaining a first formula relating $f$ and $g$. By composing \eqref{stationary transport c} with the flow $\Phi_{-t}$, we obtain
\[
	f\circ\Phi_{-t}+\frac{\di\ }{\di t}(f\circ\Phi_{-t}) = g\circ\Phi_{-t},
\]
and thus
\[
	\frac{\di\ }{\di t}(e^t f\circ\Phi_{-t})= e^t g\circ\Phi_{-t}.
\]
We then integrate between $0$ and $t$ to get the following formula
\[
	e^tf\circ\Phi_{-t}-f= \int_0^t e^sg\circ\Phi_{-s} \, \di s,
\]
and consequently
\[
	 f\circ\Phi_{-t}=e^{-t}f+ \int_0^t e^{-(t-s)}g\circ\Phi_{-s} \, \di s.
\]
In order to later obtain a formula involving the function $f$ alone, we compose with $\Phi_t$
\begin{equation}\label{f with t}
	f = e^{-t} f\circ\Phi_t+ \int_0^t e^{-(t-s)}g\circ\Phi_{t-s} \, \di s
	= e^{-t} f\circ\Phi_t+ \int_0^t e^{-s}g\circ\Phi_s \, \di s.
\end{equation}
Notice that the above computations only require $g\in C^0(\mathbb R^d)$ and $f\in C^1(\mathbb R^d)$.

\medskip

\noindent\textbf{$\triangleright$ Step 1.2: Estimate in $L^p$.} We can now prove the bound \eqref{eq:contraction}. To that end, consider a radius $r>0$ and a time $t>0$. The strategy is to establish estimates in $L^p(B(0,r))$ and pass to the limit as $r\rightarrow+\infty$. In order to simplify the writing, we set $\lambda = \Vert\divv(\cv)\Vert_{L^{\infty}}$ and $M_0 = \Vert\cv\Vert_{L^{\infty}}$ in the following. Notice that 
\[
	\Phi_t(B(0,r))\subset B(0,r+M_0t),
\]
since we have for every $x\in\mathbb R^d$
\[
	|\Phi_t(x)|=\left|x+\int_0^t\cv\circ\Phi_s(x)\, \di s\right|\leq|x|+M_0 t.
\]
It therefore follows, after the change of variables $y=\Phi_t(x)$, and by Liouville's formula
\begin{equation}\label{determinantII}
	J\Phi_t(x) := \det (\mathrm{d} \Phi_t (x)) = \exp\left(\int_0^t \divv (\cv) (\Phi_s(x))\,\mathrm ds \right) \leq \exp(\lambda|t|),
\end{equation}
that when $p<+\infty$, 
\begin{equation}\label{bound Lq cv}
	\|g\circ\Phi_t\|_{L^p(B(0,r))}=\left(\int_{\Phi_t(B(0,r))} |g(y)|^p \det (\mathrm{d} \Phi_{-t}(y)) \, \di y\right)^{1/p}\leq e^{\lambda t/p} \|g\|_{L^p(B(0,r+M_0 t))}.
\end{equation}
Clearly, this estimate holds true for $p=+\infty$. We then apply the bound \eqref{bound Lq cv} to \eqref{f with t} to obtain
\begin{align*}
	\|f\|_{L^p(B(0,r))}
	& \leq e^{-t} \|f\|_{L^p(B(0,r+M_0 t))} + \int_0^t e^{-(1-\lambda/p)s} \|g\|_{L^p(\R^d)} \, \di s \\
	& \leq e^{-t} \|f\|_{L^p(\R^d)} + \int_0^t e^{-(1-\lambda/p)s} \|g\|_{L^p(\R^d)} \, \di s.
\end{align*}
Therefore, by letting $t\to+\infty$ , then $r\to+\infty$, while using the smallness assumption \eqref{eq:smallstronguniq}, we obtain the expected estimate \eqref{eq:contraction}.

\medskip

\noindent\textbf{$\triangleright$ Step 1.3: Formula.} It now only remains to establish the formula \eqref{eq:resolventestimate}. Actually, we only need to justify that we can let $t\to+\infty$ in \eqref{f with t}. To that end, set $F$ the right-hand side of \eqref{eq:resolventestimate}. It is well-defined pointwize, and continuous by continuity under the integral sign. On the basis of \eqref{f with t}, we have
\[
	f-F= e^{-t}f\circ\Phi_t- \int_t^\infty e^{-s} g\circ\Phi_s \, \di s.
\]
It follows that for every $r>0$, 
\[
	\|f-F\|_{L^p(B(0,r))}\leq e^{-(1-\lambda/p)t} \|f\|_{L^p(\R^d)}
	+\int_t^\infty e^{-(1-\lambda/p)s} \|g\|_{L^p(\R^d)}\, \di s,
\]
and thus, as can be seen by letting $t\to+\infty$ while using the condition \eqref{eq:smallstronguniq}, we get $f=F$ on $B(0,r)$. Since $r$ is arbitrary, we obtain \eqref{eq:resolventestimate}, as an equality pointwize and also in $L^p(\mathbb R^d)$.
\end{proof}

The following result on the existence of smooth solutions will not be used elsewhere in this work, but is presented and proven for completeness of the present study.

\begin{proposition}[Existence] Let $p\in[1,+\infty]$ and $g\in L^p(\mathbb R^d)\cap C^1_b(\mathbb R^d)$. Assume that the smallness condition \eqref{eq:smallstronguniq} holds. Then, the function $f\in L^p(\mathbb R^d)\cap C^1_b(\mathbb R^d)$ defined by \eqref{eq:resolventestimate} is a strong solution of the equation \eqref{stationary transport c} and satisfies the bound \eqref{eq:contraction}.
\end{proposition}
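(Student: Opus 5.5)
The plan is to verify directly that the function $f$ given by \eqref{eq:resolventestimate} has the three required properties: it satisfies the equation, it lies in $C^1_b$, and it obeys the $L^p$ bound. The equation is obtained along flow lines, with no extra hypothesis. The $C^1_b$ regularity comes from differentiating under the integral sign, and that step needs a size condition on $\di\cv$ that I take from the paper's standing normalization.

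\textbf{Step 1: $f$ is well defined, continuous and bounded.} Since $g\in C^1_b$, the integrand $e^{-t}g\circ\Phi_t(x)$ is continuous in $(t,x)$ and dominated by $e^{-t}\|g\|_{L^\infty}$. Hence $f\in C^0_b$ with $\|f\|_{L^\infty}\le\|g\|_{L^\infty}$.

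\textbf{Step 2: the equation along the flow.} Fix $x$ and use the group property $\Phi_t\circ\Phi_s=\Phi_{t+s}$. After the substitution $t\mapsto t-s$,
\[
f(\Phi_s(x))=\int_0^\infty e^{-t}g(\Phi_{t+s}(x))\,\di t=e^{s}\int_s^\infty e^{-t}g(\Phi_t(x))\,\di t .
\]
The right-hand side is $C^1$ in $s$, and its derivative at $s=0$ is $f(x)-g(x)$. Since $\frac{\di}{\di s}\big|_{s=0}f(\Phi_s(x))=\cv(x)\cdot\nabla f(x)$ once $f$ is known to be $C^1$, this gives \eqref{stationary transport c} pointwise at every $x$.

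\textbf{Step 3: $f\in C^1_b$ (the main obstacle).} Differentiating under the integral sign gives the candidate
\[
\di f(x)=\int_0^\infty e^{-t}\,\di g(\Phi_t(x))\,\di\Phi_t(x)\,\di t .
\]
By \eqref{bound dPhi}, $\|\di\Phi_t(x)\|\le e^{\|\di\cv\|_{L^\infty}t}$. The integrand is therefore continuous in $x$ and dominated by $\|\di g\|_{L^\infty}e^{-(1-\|\di\cv\|_{L^\infty})t}$. This bound is integrable exactly when $\|\di\cv\|_{L^\infty}<1$, which holds under the reduction of Section~\ref{subsec:some reductions}, where $\cv$ is replaced by $\cv_\Lambda$ and $M<1/(2+d)$. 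Under that normalization, dominated convergence gives $f\in C^1$ with
\[
\|\di f\|_{L^\infty}\le\frac{\|\di g\|_{L^\infty}}{1-M},
\]
so $f\in C^1_b$. Without a bound of this kind the argument fails, and not only technically. For a linear field $\cv(x)=\mu x$ with $\mu>1$, localized away from the origin, $f$ behaves like $|x|^{1/\mu}$ near $0$ and so fails to be $C^1$. The $C^1_b$ claim must therefore be read under the paper's convention $M\le 1$, with $M<1$ understood for $\|\di\cv\|_{L^\infty}$; I would state this explicitly at this point of the proof.

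\textbf{Step 4: the $L^p$ bound.} Once $f\in L^p\cap C^1$ is known to be a strong solution, the bound \eqref{eq:contraction} follows from Proposition~\ref{prop:strongsol}. One can also check it directly: Minkowski's inequality for the Bochner integral \eqref{eq:resolventestimate}, combined with \eqref{bound Lq cv} with $r=\infty$, gives
\[
\|f\|_{L^p}\le\int_0^\infty e^{-(1-\lambda/p)t}\|g\|_{L^p}\,\di t=\frac{p}{p-\lambda}\|g\|_{L^p},
\]
where $\lambda=\|\divv(\cv)\|_{L^\infty}$. The integral is finite by \eqref{eq:smallstronguniq}, and this also shows $f\in L^p$.
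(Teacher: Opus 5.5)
Your proof is correct and follows the same route as the paper's. The group property gives $f\circ\Phi_s=e^{s}\int_s^\infty e^{-t}g\circ\Phi_t\,\mathrm dt$, whose derivative at $s=0$ yields the equation; $C^1$ regularity comes from differentiating under the integral sign using $\|\mathrm d\Phi_t(x)\|\le e^{\|\mathrm d\cv\|_{L^\infty}t}$; and the $L^p$ bound comes from Liouville's formula.

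Your caveat in Step~3 is well founded, and stating the hypothesis $\|\mathrm d\cv\|_{L^\infty}<1$ explicitly is the right fix. The paper's proof tacitly needs it to get the integrable majorant $e^{-(1-\|\mathrm d\cv\|_{L^\infty})t}$. Your hyperbolic-source example shows the $C^1$ conclusion really fails without it, already for $p=+\infty$, where the divergence condition is vacuous.
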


\begin{proof} Let us introduce the notation $M_1 = \|\di\cv\|_{L^{\infty}}$. We first justify that  $f$ is of class $C^1$ on $\mathbb R^d$. Since
\[
	\di\Phi_t(x)=\exp\bigg(\int_0^t\di\cv\circ\Phi_s(x)\,\mathrm ds \bigg),\quad\text{and thus}\quad\|\di\Phi_t(x)\|\leq e^{M_1t},
\]
the fact that $f\in C^1(\mathbb R^d)$ is justified by derivation under the integral sign. We can then justify \eqref{stationary transport c} by direct differentiation in \eqref{eq:resolventestimate}, but this is clumsy. Since $\cv\cdot\nabla_x$ is the generator associated to the group $(\Phi_t)_{t\in\mathbb R}$, it is better to examine the quantity $f\circ\Phi_\tau-f$ for a small $\tau$. Since
\[
	f\circ\Phi_\tau= \int_0^\infty e^{-t}g\circ\Phi_{t+\tau}\,\mathrm dt,
\]
by the group property of $(\Phi_t)_{t\in\mathbb R}$, the change of variable $t'=t+\tau$ gives
\[
	f\circ\Phi_\tau = e^{\tau}\int_\tau^\infty e^{-t} g\circ\Phi_t\,\mathrm dt.
\]
We therefore obtain
\begin{equation}\label{ftau f}
	f\circ\Phi_\tau-f = \int_0^\infty (e^{\tau-t} - e^{-t})g\circ\Phi_t\,\mathrm dt
	-\int_0^\tau e^{\tau-t}g\circ\Phi_t\,\mathrm dt.
\end{equation}
The last term in the right-hand side of \eqref{ftau f} is
\[
	-\tau \int_0^1 e^{\tau-\tau t}g\circ\Phi_{\tau t}\,\mathrm dt,
\]
and thus behaves (pointwise) as $-\tau g+o(\tau)$. The first term in the right-hand side of \eqref{ftau f} can be rewritten as
\[
	\int_0^\infty (e^{\tau}-1)e^{-t}g\circ\Phi_t\,\mathrm dt,
\]
and is thus $\tau f+o(\tau)$. We therefore obtain that for every $x\in\mathbb R^d$,
\[
	\cv(x)\cdot\nabla_x f(x) = \lim_{\tau\rightarrow0}\frac{f\circ\Phi_\tau(x)-f(x)}{\tau} = -g(x) + f(x),
\]
as desired.
\end{proof}

\subsection{Weak solutions}

In this section, we investigate the existence and uniqueness of weak solutions in $L^p(\mathbb R^d)$ for the transport equation \eqref{stationary transport c}, defined as follows.

\begin{definition}[Weak solution] Let $p\in[1,+\infty]$ and let $g\in L^p(\R^d)$. A function $f\in L^p(\R^d)$ is said to be a weak solution to the stationary transport equation \eqref{stationary transport c} when it satisfies for every $\varphi\in\mathcal{D}(\R^d)$
\[
    \int_{\R^d}f(x)(\varphi(x)+\divv(\cv(x)\varphi(x)))\,\mathrm dx = \int_{\mathbb R^d}g(x)\varphi(x)\,\mathrm dx.
\]
\end{definition}

Our result on weak solutions for the equation \eqref{stationary transport c} is the following

\begin{proposition}\label{prop:weaksol} Let $p\in[1,+\infty]$ and $g\in L^p(\mathbb R^d)$. Assume that the smallness condition \eqref{eq:smallstronguniq} holds. Then, the function $f\in L^p(\mathbb R^d)$ defined by \eqref{eq:resolventestimate} is the only weak solution in $L^p(\mathbb R^d)$ of the transport equation \eqref{stationary transport c} in $\mathcal D'(\mathbb R^d)$. Moreover, the function $f$ still satisfies the bound \eqref{eq:contraction}.
\end{proposition}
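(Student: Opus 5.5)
We have to prove three things about $F:=\int_0^\infty e^{-t}g\circ\Phi_t\,\mathrm dt$: it belongs to $L^p$ with the bound \eqref{eq:contraction}, it is a weak solution, and it is the only weak solution in $L^p(\R^d)$. Set $\lambda=\Vert\divv(\cv)\Vert_{L^\infty}<p$ throughout.

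\textbf{Bound.} The $L^p$ bound follows directly from Minkowski's integral inequality combined with the change of variables estimate \eqref{bound Lq cv}, with $r=+\infty$: $\Vert g\circ\Phi_t\Vert_{L^p}\le e^{\lambda t/p}\Vert g\Vert_{L^p}$. Integrating against $e^{-t}$ gives the constant $\int_0^\infty e^{-(1-\lambda/p)t}\,\mathrm dt=p/(p-\lambda)$. For $p=\infty$ the bound is immediate. This also shows that the map $g\mapsto F$ is bounded on $L^p$.

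\textbf{Existence.} I would argue by duality. For $\varphi\in\mathcal D(\R^d)$, the change of variables $y=\Phi_t(x)$ with \eqref{determinantII} gives
\[
\int F\,(\varphi+\divv(\cv\varphi))\,\mathrm dx=\int g\,\int_0^\infty e^{-t}\big[(\varphi+\divv(\cv\varphi))\circ\Phi_{-t}\big]\,J\Phi_{-t}\,\mathrm dt\,\mathrm dy .
\]
The right-hand side is justified by Fubini, since $\varphi$ has compact support and the flow moves supports by at most $M_0t$.

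It then remains to check the pointwise identity $\int_0^\infty e^{-t}\big[(\psi\circ\Phi_{-t})J\Phi_{-t}\big]\,\mathrm dt=\varphi$, where $\psi=\varphi+\divv(\cv\varphi)$. Set $u_t:=(\varphi\circ\Phi_{-t})J\Phi_{-t}$, the push-forward density of $\varphi$. It solves the continuity equation $\partial_tu_t=-\divv(\cv u_t)$, and $(\psi\circ\Phi_{-t})J\Phi_{-t}=u_t-\partial_tu_t$. Integrating by parts in $t$ then gives exactly $u_0=\varphi$. The boundary term at infinity vanishes pointwise thanks to $e^{-t}$ and the bound $J\Phi_{-t}\le e^{\lambda t}$ with $\lambda<p$; if needed, I would instead view it in $L^{p'}$ after pairing with $g$.

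Alternatively, one can approximate $g$ in $L^p$ by $g_n\in C^1_b\cap L^p$ (for $p<\infty$), use the existence proposition for strong solutions, and pass to the limit using the bound. I would keep the duality computation because it also covers $p=\infty$.

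\textbf{Uniqueness.} This is the main obstacle, since a weak solution need not be $C^1$, so Proposition~\ref{prop:strongsol} does not apply directly. By linearity, it suffices to show that a weak solution $f\in L^p$ with $g=0$ vanishes. Two routes are available.

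\emph{Regularization.} Mollify, $f_\epsilon=f*\rho_\epsilon$. By the DiPerna--Lions commutator lemma (valid since $\cv\in C^1_b$), $f_\epsilon-\cv\cdot\nabla f_\epsilon=r_\epsilon\to0$ in $L^p_{\mathrm{loc}}$. Here $f_\epsilon\in C^1\cap L^p$. Rather than the global statement of Proposition~\ref{prop:strongsol}, I would use the local estimate of Step 1.2, which only involves $L^p(B(0,r))$ norms and a finite propagation speed $M_0t$. Letting $\epsilon\to0$, then $t\to\infty$, then $r\to\infty$ should give $f=0$. The delicate point is that the commutator converges only locally, so the order of the limits matters.

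\emph{Duality (cleaner).} Test the weak formulation with $\varphi=\int_0^\infty e^{-t}u_t\,\mathrm dt$, built from a test function $\chi$ by the construction of the existence step, so that $\varphi+\divv(\cv\varphi)=\chi$. This $\varphi$ is $C^1$ with sufficient decay but is not compactly supported. I would therefore truncate it with cutoffs $\zeta_R$ and control the error term $\int f\,\varphi\,\cv\cdot\nabla\zeta_R$, using $\varphi\in L^{p'}$. That membership follows from the same Liouville bound, with exponent $\lambda/p'$ against $e^{-t}$; when $p\ge2$, the condition $\lambda<p$ ensures $\lambda\le p'$ is not needed, and the precise requirement on $\lambda$ must be checked in each case. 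If the integrability of $\varphi$ fails for large $p$, I would fall back on the regularization route. Once the error term is controlled, we get $\int f\chi=0$ for all $\chi$, hence $f=0$.
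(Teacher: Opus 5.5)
Your proposal is correct and is essentially the paper's proof: the $L^p$ bound from Liouville's formula, existence by a duality computation (your integration by parts in $t$ along the push-forward densities $u_t$ is the integrated form of the paper's differentiation at $\sigma=0$ of the identity $Sg\circ\Phi_\sigma=e^{\sigma}\big(Sg-\int_0^\sigma e^{-t}g\circ\Phi_t\,\mathrm dt\big)$), and uniqueness by mollification, the DiPerna--Lions commutator lemma and the a priori estimate behind Proposition~\ref{prop:strongsol}. Two small corrections are needed: first, $e^{-t}u_t$ need not vanish pointwise once $\lambda\ge1$ (take a zero of $\cv$ where $\divv(\cv)=-\lambda$); it vanishes only in $L^{p'}$, via $\Vert u_t\Vert_{L^{p'}}\le e^{\lambda t/p}\Vert\varphi\Vert_{L^{p'}}$. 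The exponent is $\lambda/p$, not $\lambda/p'$, so your dual test function lies in $L^{p'}$ as soon as $\lambda<p$, and this same bound, not compact support, is what justifies Fubini. Second, your localization is unnecessary for $p<\infty$, because $\cv$ is globally Lipschitz and the commutator then tends to $0$ in $L^p(\R^d)$ (uniform bound plus density), but it is genuinely needed for $p=\infty$: there the commutator only tends to $0$ in $L^q_{\mathrm{loc}}$ for $q<\infty$, so the local estimate of Step~1.2 has to be run in $L^q(B(0,r))$ with $\lambda<q<\infty$, a case the paper's global argument does not cover as written.
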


\begin{proof} In order to simplify the writing, we set $\lambda = \Vert\divv(a)\Vert_{L^{\infty}}$. 

\medskip

\noindent\textbf{Existence.} Let us consider the operator $S: L^p(\mathbb R^d)\rightarrow L^p(\mathbb R^d)$ defined for every $g\in L^p(\mathbb R^d)$ by
\[
	Sg = \int_0^\infty e^{-t}g\circ\Phi_t\, \di t.
\]
We check that the operator $S$ is well-defined. For $g\in L^p(\R^d)$, we apply the change of variable $y=\Phi_t(x)$, whose Jacobian determinant is given by Liouville's formula \eqref{determinantII}, to obtain by Jensen's inequality for $L^p$ functions
\begin{align*}
	\Vert Sg\Vert_{L^p} 
	&\leq  \int_0^\infty e^{-t}  \Vert g\circ\Phi_t\Vert_{L^p}\, \dt  \\
	&\leq \int_0^\infty e^{-(1-\lambda/p)t} \Vert g\Vert_{L^p}\,\dt
	= \frac{p}{p - \lambda}\Vert g\Vert_{L^p}.
\end{align*}
Moreover, if $\sigma\in\R$, the group property of $(\Phi_t)_{t\in\mathbb R}$ gives
\[
Sg\circ\Phi_\sigma=\int_0^\infty e^{-t} g\circ\Phi_{t+\sigma}\,\mathrm dt
= e^\sigma\bigg(Sg-\int_0^\sigma e^{-t}g\circ\Phi_t\,\mathrm dt\bigg).
\]
We test this identity against $\varphi\in\mathcal{D}(\R^d)$. Fubini's theorem and the change of variable $y=\Phi_t(x)$ (inverted in $x=\Phi_{-t}(y)$ since $\cv$ is an autonomous vector field) yield
\[
\dual{Sg}{(\varphi\circ\Phi_{-\sigma})(J\Phi_{-\sigma})}_{\mathcal D',\mathcal D} =
e^\sigma\bigg(\dual{Sg}{\varphi}_{\mathcal D',\mathcal D}
-\int_0^\sigma e^{-t}\dual{g}{(\varphi\circ\Phi_{-t})(J\Phi_{-t})}_{\mathcal D',\mathcal D}\,\mathrm dt\bigg).
\]
We evaluate the derivative with respect to $\sigma$ of this last expression at $\sigma=0$ to obtain
\[
\dual{Sg}{\varphi+\divv_x(\cv\varphi)}_{\mathcal D',\mathcal D} = \dual{g}{\varphi}_{\mathcal D',\mathcal D}.
\]
This proves that $Sg$ satisfies \eqref{stationary transport c} in $\mathcal{D}'(\R^d)$, as expected.

\medskip

\noindent\textbf{Uniqueness.} Let us now consider a function $f\in L^p(\R^d)$ which satisfies the equation \eqref{stationary transport c} in $\mathcal{D}'(\R^d)$ with $g=0$. In order to prove that $f= 0$, we use a regularization procedure. Let $(\rho_\eps)_{\eps>0}$ be an approximation of the identity. By assumption, the function $f$ satisfies for every $\varphi\in\mathcal{D}(\R^d)$,
\begin{equation}\label{weak stationary transport II}
	\int_{\R^d}f(x)(\varphi(x)+\divv(\cv(x)\varphi(x)) )\,\mathrm dx=0.
\end{equation}
Pick then $\varphi\in\mathcal{D}(\R^d)$ and use $x\in\mathbb R^d\mapsto\varphi(x)\rho_\eps(y-x)$ as test functions in \eqref{weak stationary transport II} for every $y\in\mathbb R^d$ to get the following equation in $\mathcal D'(\mathbb R^d)$
\begin{equation}\label{f regularized}
	f^\eps-\cv\cdot\nabla_x f^\eps=r_\eps\quad\text{with}\quad r_\eps:=(\cv\cdot\nabla_x f)^\eps-\cv\cdot\nabla_x f^\eps,
\end{equation}
where we set generically $\psi^\eps:=\psi\ast\rho_\eps$. Since the smallness condition \eqref{eq:smallstronguniq} holds by assumption, and that $r_{\varepsilon}\in L^p(\mathbb R^d)\cap C^0(\mathbb R^d)$ by the equation \eqref{f regularized}, the left-hand side $f^\eps-\cv\cdot\nabla_x f^\eps$ itself belonging to this space, Proposition \ref{prop:strongsol} implies that the function $f^{\varepsilon}\in L^p(\mathbb R^d)\cap C^1(\mathbb R^d)$ satisfies the bound
\[
	\|f^\eps\|_{L^p}\leq \frac{p}{p - \lambda}\|r_\eps\|_{L^p}. 
\]
The commutation lemma \cite[Lemma II.1]{DiPernaLions89} then gives $r_\eps\to 0$ as $\varepsilon\rightarrow0^+$ in $L^p(\mathbb R^d)$, and thus $f=0$ as expected.
\end{proof}

\section{Perturbation of the non-degeneracy condition}\label{app:perturbcond}

In this second appendix, we give the proof of Proposition \ref{prop:perturb}. Let us recall that given $x_0\in\mathbb R^d$, $c>1$, $\ttt\in\mathbb R^d$ satisfying the weak non-degeneracy condition \eqref{ConditionX1d}, and $\eta = \eta(\ttt,c,d,M)\geq0$ satisfying the smallness condition \eqref{eq:smallnessperturb}, we aim at proving that for every $\sss\in B(\ttt,\eta)$
\[
	\forall i\in\{1,\ldots,d\},\quad\dist(\cv^{(i)}(x_{s_{i-1}}),T_{s_{i-1}})\geq\frac{c-1}{c+1}\dist(\cv^{(i)}(x_{t_{i-1}}),T_{t_{i-1}}).
\]
Introducing the following general notations, where $u_i$ stands for $t_i$ or $s_i$ in all the proof,
\[
	d_{u_i} := \dist(\cv^{(i+1)}(x_{u_i}),T_{u_i}),
\]
the above estimate will be deduced from the following one
\begin{equation}\label{eq:interin}
	\vert d_{t_i}-d_{s_i}\vert \leq \frac1c (d_{t_i}+d_{s_i}).
\end{equation}
The proof is divided in several steps.
\newline

\noindent\textbf{$\triangleright$ Prerequisite: Gram matrix.} We will use the notion of Gram matrix, whose definition is the following. Given a vector space $E$ endowed with a scalar product $\langle\cdot,\cdot\rangle$, and $\alpha_1,\ldots,\alpha_n$ some vectors in $E$, the Gram matrix $G(\alpha_1,\ldots,\alpha_n)$ of $(\alpha_1,\ldots,\alpha_n)$ is defined by
\begin{equation}\label{eq:grammatrix}
	G(\alpha_1,\ldots,\alpha_n) = (\alpha_1,\ldots,\alpha_n)^T(\alpha_1,\ldots,\alpha_n) = (\langle\alpha_i,\alpha_j\rangle)_{1\le i,j\le n}.
\end{equation}
As stated e.g. in \cite[p.130]{Deutsch2001}, if $x\in E$ and $V\subset E$ is a finite-dimensional vector subspace, say of dimension $n$, then the distance from $x$ to $V$ is given by the following formula
\begin{equation}\label{dist via Gram}
	\dist(x,V) = \bigg[\frac{\det G(x,\alpha_1,\ldots,\alpha_n)}{\det G(\alpha_1,\ldots,\alpha_n)}\bigg]^{1/2},
\end{equation}
where $(\alpha_1,\ldots,\alpha_n)$ stands for any basis of $V$. It is precisely by means of this formula that we will be able to establish \eqref{eq:interin}.
\newline

\noindent\textbf{$\triangleright$ Step 1: Basis of the tangent spaces}. We need to construct basis for the tangent spaces $T_{u_i}$ defined by \eqref{eq:tangentspaces}, in order to be able to use \eqref{dist via Gram}. This can be achieved in a generic way as follows: if we initialize $\alpha_1^{u_1}=\cv^{(1)}(x_0)$, then we can successively deduce a basis $(\alpha_1^{u_{i+1}},\ldots,\alpha_{i+1}^{u_{i+1}})$ of $T_{u_{i+1}}$ from a basis $(\alpha_1^{u_i},\ldots,\alpha_i^{u_i})$ of $T_{u_i}$, by setting (recall \eqref{DGi})
\begin{align}
	& \alpha_k^{u_{i+1}} = (\Phi^{(i+1)}_{u_{i+1}})_{\ast}\alpha_k^{u_i} = \di\Phi^{(i+1)}_{u_{i+1}}(x_{u_i})\cdot \alpha_k^{u_i},\quad 1\leq k\leq i, \label{eq:basisI} \\
	& \alpha_{i+1}^{u_{i+1}} = (\Phi^{(i+1)}_{t_{i+1}})_{\ast}\cv^{(i+1)}(x_{u_i}) = \cv^{(i+1)}(x_{u_{i+1}}). \nonumber
\end{align}
Let us introduce the general notations
\begin{equation}\label{eq:defxb}
	X_{u_i} = (\cv^{(i+1)}(x_{u_i}), \alpha_1^{u_i},\ldots,\alpha_i^{u_i})\quad\text{and}\quad
	\mathcal B_{u_i} = (\alpha_1^{u_i},\ldots,\alpha_i^{u_i}),
\end{equation}
Using the formula \eqref{dist via Gram} (twice) and the inequality $\vert a-b\vert^2 \leq |a^2-b^2|$ (for $a,b\geq 0$), we get  
\begin{align}\nonumber
	\vert d_{t_i} - d_{s_i}\vert^2 & \leq \vert d_{t_i}^2 - d_{s_i}^2\vert = \bigg\vert\frac{\det G(X_{t_i})}{\det G(\mathcal B_{t_i})} 
	- \frac{\det G(X_{s_i})}{\det G(\mathcal B_{s_i})}\bigg\vert \\[2pt]
	& \leq \frac{(\det G(X_{s_i}))\vert\det G(\mathcal B_{s_i}) - \det G(\mathcal B_{t_i})\vert}{(\det G(\mathcal B_{s_i}))(\det G(\mathcal B_{t_i}))} 
	+  \frac{\vert\det G(X_{s_i}) - \det G(X_{t_i})\vert}{\det G(\mathcal B_{t_i})} \nonumber \\[2pt]
	& = \frac{d_{s_i}^2\vert\det G(\mathcal B_{s_i}) - \det G(\mathcal B_{t_i})\vert}{\det G(\mathcal B_{t_i})} 
	+ \frac{\vert\det G(X_{s_i}) - \det G(X_{t_i})\vert}{\det G(\mathcal B_{t_i})}. \label{eq:firstestimate}
\end{align}
It appears that we need bounds on the (difference of) determinants of Gram matrices. This is the purpose of the following step.
\newline

\noindent \textbf{$\triangleright$ Step 2: Estimates on Gram determinants.} Let us return temporarily to a general discussion on Gram matrices. Let us consider a positive integer $n\geq1$ and $X,Y\in (\mathbb R^d)^n$. Assume that the matrix $G(Y)$ is invertible (this happens when the column vectors of $G(Y)$ are linearly independent). we will use the following notations
\[
    \vert X\vert = (\vert X_1\vert^2 + \cdots + \vert X_n\vert^2)^{1/2},\quad 
    \vert Y\vert = (\vert Y_1\vert^2 + \cdots + \vert Y_n\vert^2)^{1/2}.
\]
We first notice from the estimate \eqref{det estimate} and the inequality $1 \leq \Vert A^{-1}\Vert \Vert A\Vert$, which holds for every non-degenerate matrix $A\in M_n(\mathbb R)$, that the difference between the Gram determinants $\det G(X)$ and $\det G(Y)$ can be bounded as follows
\begin{align*}
	\vert\det G(X) - \det G(Y)\vert & \le \bigg[\bigg(\frac{\Vert G(X) - G(Y)\Vert}{\Vert G(Y)^{-1}\Vert}+1\bigg)^n-1\bigg]\det G(Y) \\
	& \le \big[\big(\Vert G(Y)\Vert\Vert G(X) - G(Y)\Vert+1\big)^n-1\big]\det G(Y).
\end{align*}
Moreover, under the smallness assumption $\Vert G(Y)\Vert\Vert G(X) - G(Y)\Vert\le 1$, the estimate \eqref{a^d-1} implies
\[
	\vert\det G(X) - \det G(Y)\vert\le 2^n\Vert G(Y)\Vert\Vert G(X) - G(Y)\Vert\det G(Y).
\] 
Notice then from the definition \eqref{eq:grammatrix} of the Gram matrix that $\Vert G(Y)\Vert\le\vert Y\vert^2$ and also
\[
	G(X) - G(Y) = X^TX - Y^TY = (X-Y)^TX + Y^T(X-Y),
\]
which implies
\[
	\Vert G(X) - G(Y)\Vert\le\vert X-Y\vert(\vert X\vert + \vert Y\vert).
\]
As a consequence,
\begin{equation}\label{eq:smalldet}
	\vert Y\vert^2\vert X-Y\vert(\vert X\vert + \vert Y\vert)\le 1\quad\Longrightarrow\quad\Vert G(Y)\Vert\Vert G(X) - G(Y)\Vert\le 1,
\end{equation}
and under this assumption, we have 
\[
	\vert\det G(X) - \det G(Y)\vert\le 2^n\vert Y\vert^2\vert X-Y\vert(\vert X\vert +\vert Y\vert)\det G(Y).
\]

Let us now come back to the distances $d_{u_i}$ that we aim studying. Notice that since $\mathcal B_{u_i}$ is a basis of the tangent $T_{u_i}$, the matrix $G(\mathcal B_{u_i})$ is invertible. Moreover, thanks to the the weak non-degeneracy condition \eqref{ConditionX1d}, the family $(\cv^{(i+1)}(x_{u_i}), \alpha_1^{u_i},\ldots,\alpha_i^{u_i})$ is linearly independent, so the matrix $G(X_{t_i})$ is invertible as well. Therefore, when the smallness assumption \eqref{eq:smalldet} holds for $(X,Y) = (X_{s_i},X_{t_i})$ and $(X,Y) = (\mathcal B_{s_i},\mathcal B_{t_i})$, we deduce from  the formula \eqref{dist via Gram}, the inequality \eqref{eq:firstestimate} and the above estimate (with $n=i$ and $n=i+1$) that
\begin{align}
	\vert d_{t_i} - d_{s_i}\vert^2 & \le \frac{2^i d_{s_i}^2\vert\mathcal B_{t_i}\vert^2\vert\mathcal B_{t_i}-\mathcal B_{s_i}\vert(\vert\mathcal B_{t_i}\vert +\vert\mathcal B_{s_i}\vert)\det G(\mathcal B_{t_i})}{\det G(\mathcal B_{t_i})}  \nonumber \\
	& \hspace{138pt} + \frac{2^{i+1}\vert X_{t_i}\vert^2\vert X_{t_i}-X_{s_i}\vert(\vert X_{t_i}\vert +\vert X_{s_i}\vert)\det G(X_{t_i})}{\det G(\mathcal B_{t_i})} \nonumber \\[5pt]
	& \hspace{-2mm} = 2^id_{s_i}^2\vert\mathcal B_{t_i}\vert^2\vert\mathcal B_{t_i}-\mathcal B_{s_i}\vert(\vert\mathcal B_{t_i}\vert + \vert\mathcal B_{s_i}\vert) 
	+ 2^{i+1} d_{t_i}^2\vert X_{t_i}\vert^2\vert X_{t_i}-X_{s_i}\vert(\vert X_{t_i}\vert +\vert X_{s_i}\vert). \label{eq:avantder}
\end{align}
It now remains to check that the condition \eqref{eq:smalldet} actually holds, and to estimate the norms involved in the above estimates.
\newline

\noindent \textbf{$\triangleright$ Step 3: Estimates on the tangent spaces.} In view of the definition \eqref{eq:defxb} of the vectors $X_{u_i}$ and $\mathcal B_{u_i}$, we need to derive estimates for the vectors $\alpha^{u_i}_k$ (and their differences). Notice first that by the bound \eqref{dHi bound2} on $\di H^{(i)}(\uuu_i)$ and the fact that $\sss\in B(\ttt,\eta)$,
\[
	|\uuu|_{\ell^1} \leq |\ttt|_{\ell^1} + d \eta, \quad \forall \uuu \in B(\ttt,\eta),
\]
we have by the mean value theorem
\begin{equation}\label{xi VS zi}
	\vert x_{t_i}-x_{s_i}\vert = \vert H^{(i)}(\ttt_i)-H^{(i)}(\sss_i)\vert
	\le\sup_{\textbf{u}_i\in[\ttt_i,\sss_i]}\Vert\di H^{(i)}(\mathbf{u}_i)\Vert\vert\ttt_i-\sss_i\vert
	\le 2^{d+1}\eta e^{M(\vert\ttt_i\vert_{\ell^1}+d\eta)}.
\end{equation}
Using the definition \eqref{eq:basisI} of the vectors $\alpha^{u_{i+1}}_k$ and the bound \eqref{bound dPhi}, we obtain for every $\uuu\in\mathbb R^d$,
\begin{equation}\label{bound11}
	\vert \alpha^{u_{i+1}}_k\vert \le \Vert\di\Phi_{u_{i+1}}^{(i+1)}\Vert_{L^{\infty}_x}\vert \alpha^{u_i}_k\vert
	\le e^{M \vert u_{i+1}\vert}\vert \alpha^{u_i}_k\vert
	\quad\Longrightarrow\quad\vert \alpha^{u_i}_k\vert \leq e^{M \vert\uuu_i\vert_{\ell^1}}.
\end{equation}
In addition, if $i=k$, we use that $M \leq 1$ and \eqref{xi VS zi} to obtain
\begin{equation}\label{casek}
	\vert \alpha_{k}^{t_k}-\alpha_{k}^{s_k} \vert = \vert \cv^{(k)}(x_{t_k}) - \cv^{(k)}(x_{s_k}) \vert 
	\leq M |x_{t_k}-x_{s_k}| \leq 2^{d+1}\eta e^{M(\vert\ttt\vert_{\ell^1}+d\eta)}.
\end{equation}
We use induction to obtain a general bound. For $i=k,\ldots,d-1$, we write
\begin{multline*}
	\vert \alpha^{t_{i+1}}_k - \alpha^{s_{i+1}}_k\vert 
	= \vert\di\Phi^{(i+1)}_{t_{i+1}}(x_{t_i})\cdot \alpha_k^{t_i} - \di\Phi^{(i+1)}_{s_{i+1}}(x_{s_i})\cdot\alpha_k^{s_i}\vert
   	\le \vert\di\Phi^{(i+1)}_{t_{i+1}}(x_{t_i})\cdot \alpha_k^{t_i} - \di\Phi^{(i+1)}_{s_{i+1}}(x_{t_i})\cdot\alpha_k^{t_i}\vert \\[2pt]
   	+ \vert\di\Phi^{(i+1)}_{s_{i+1}}(x_{t_i})\cdot\alpha_k^{t_i} - \di\Phi^{(i+1)}_{s_{i+1}}(x_{t_i})\cdot\alpha_k^{s_i}\vert
	+ \vert\di\Phi^{(i+1)}_{s_{i+1}}(x_{t_i})\cdot\alpha_k^{s_i} - \di\Phi^{(i+1)}_{s_{i+1}}(x_{s_i})\cdot \alpha_k^{s_i}\vert .
\end{multline*}
We can bound the three above terms as follows. First of all, using the mean value theorem, the fact that 
\[
\Vert\di^2_{s,x}\Phi^{(i+1)}_{u_i}(x)\Vert = \Vert\di\cv^{(i+1)}(\Phi^{(i+1)}_{u_i}(x)) \circ \di \Phi^{(i+1)}_{u_i}(x)\Vert\le e^{\lambda |u_i|},
\]
the estimate \eqref{bound11}, and the fact that $\sss\in B(\ttt,\eta)$, we get 
\begin{multline*}
	\vert\di\Phi^{(i+1)}_{t_{i+1}}(x_{t_i})\cdot\alpha_k^{t_i} - \di\Phi^{(i+1)}_{s_{i+1}}(x_{t_i})\cdot\alpha_k^{t_i}\vert \\
	\le \sup_{u_i\in[t_{i+1},s_{i+1}]}\Vert\di^2_{s,x}\Phi^{(i+1)}_{u_i}\Vert_{L^{\infty}_x}\vert t_{i+1}-s_{i+1}\vert\vert\alpha^{t_i}_k\vert 
	\le \eta e^{ M ( \vert \ttt_{i+1} \vert_{\ell^1}+\eta)}.
\end{multline*}
Then, we deduce from \eqref{bound dPhi} that 
\[
	 \vert\di\Phi^{(i+1)}_{s_{i+1}}(x_{t_i})\cdot\alpha_k^{t_i} - \di\Phi^{(i+1)}_{s_{i+1}}(x_{t_i})\cdot\alpha_k^{s_i}\vert 
	 \le \Vert\di\Phi^{(i+1)}_{s_{i+1}}\Vert_{L^{\infty}_x}\vert\alpha^{t_i}_k-\alpha^{s_i}_k\vert 
	 \le e^{M\vert s_{i+1}\vert}\vert\alpha^{t_i}_k-\alpha^{s_i}_k\vert.
\]
Finally, it follows from the formula \eqref{diff2Flow}, \eqref{Me^M sigma}, \eqref{xi VS zi}, and \eqref{bound11} that
\begin{multline*}
    \vert\di\Phi^{(i+1)}_{s_{i+1}}(x_{t_i})\cdot\alpha_k^{s_i} - \di\Phi^{(i+1)}_{s_{i+1}}(x_{s_i})\cdot\alpha_k^{s_i}\vert 
    \\
	\le \Vert\di^2\Phi^{(i+1)}_{s_{i+1}}\Vert_{L^{\infty}_x}\vert x_{t_i}-x_{s_i}\vert\vert\alpha^{s_i}_k\vert
	\le 2^{d+1}\eta e^{M(\vert\sss_{i+1}\vert_{\ell^1}+ \vert\ttt_{i+1}\vert_{\ell^1}+(d+1)\eta)}.
\end{multline*}
Gathering the above estimates, we obtain
\[
	\vert\alpha^{t_{i+1}}_k - \alpha^{s_{i+1}}_k\vert\le e^{M\vert s_{i+1}\vert}\vert\alpha^{t_i}_k-\alpha^{s_i}_k\vert + 2^{d+2}\eta e^{M(\vert\sss_{i+1}\vert_{\ell^1}+ \vert\ttt_{i+1}\vert_{\ell^1}+(d+1)\eta)}.
\]
As a consequence, using induction for any $i=k,\ldots,d$ and \eqref{casek}, we have
\begin{align} \nonumber
	\vert\alpha^{t_i}_k - \alpha^{s_i}_k\vert
	&\le e^{M\vert\sss_i\vert_{\ell^1}} \vert\alpha^{t_k}_k - \alpha^{s_k}_k\vert + 2^{d+i+2}\eta e^{M(\vert\sss_i\vert_{\ell^1}+ \vert\ttt_i\vert_{\ell^1}+(d+1)\eta)} \\ \label{bound2}
	&\le 2^{2d+3}\eta e^{M(2\vert\ttt_i\vert_{\ell^1}+(2d+1)\eta)}.
\end{align}
We now have all the estimates we need on the vectors $\alpha^{u_i}_k$.

Let us now move to the estimates on the vectors $X_{u_i}$ and $\mathcal B_{u_i}$. First of all, notice that we have the following estimates from \eqref{bound11} and \eqref{bound2},
\[
	\vert\mathcal B_{u_i}\vert \leq \vert X_{u_i}\vert \leq (i+1)e^{M\vert\uuu_i\vert_{\ell^1}}
\]
and
\[
	\vert\mathcal B_{t_i} - \mathcal B_{s_i}\vert \leq \vert X_{t_i} - X_{s_i} \vert \leq (i+1)2^{2d+3}\eta e^{M(2\vert\ttt_i\vert_{\ell^1}+(2d+1)\eta)}.
\]
As a consequence,
\begin{align*}
	\vert\mathcal B_{t_i}\vert^2\vert\mathcal B_{t_i} - \mathcal B_{s_i}\vert(\vert\mathcal B_{t_i}\vert + \vert\mathcal B_{s_i}\vert) 
	& \le \vert X_{t_i}\vert^2\vert X_{t_i} - X_{s_i}\vert(\vert X_{t_i}\vert + \vert X_{s_i}\vert) \\
	& \le (i+1)^32^{2d+3}\eta e^{M(4\vert\ttt_i\vert_{\ell^1}+(2d+1)\eta)}
	((i+1)e^{M\vert\ttt_i\vert_{\ell^1}} + (i+1)e^{M\vert\sss_i\vert_{\ell^1}}) \\
	& \le 2^{6d+4}\eta e^{ M(5\vert\ttt_i\vert_{\ell^1}+4d\eta)}.
\end{align*}
As a consequence, under the assumption \eqref{eq:smallnessperturb}, the condition \eqref{eq:smalldet} holds and we deduce from the estimate \eqref{eq:avantder} that
\[
	\vert d_{t_i} - d_{s_i}\vert^2 \leq (d_{t_i}^2+d_{s_i}^2)2^{7d+5}\eta e^{ M(5 \vert\ttt_i\vert_{\ell^1}+4d\eta)}.
\]
Using the smallness condition \eqref{eq:smallnessperturb} again, we obtain 
\[
	2^{7d+5}\eta e^{M(5\vert\ttt_i\vert_{\ell^1}+4d\eta)}\leq\frac{1}{c^2},
\]
which gives the expected estimate \eqref{eq:interin}. This ends the proof of Proposition \ref{prop:perturb}.

\section{An inverse function theorem}\label{sec:local inversion}

In this third appendix, we state and prove a quantitative inverse function theorem on which the proof of Theorem \ref{th:AVLemmaII} is based. Adapting Theorem~2 of \cite{Ekeland2011} in our simple context, we have the following results.

\begin{proposition}\label{prop:from Ekeland}
Let $\eta>0$ and let $F\colon B(0,\eta)\to\R^d$ be a map of class $C^1$ satisfying the following hypothesis: $F(0)=0$, $\di F$ is invertible on $B(0,\eta)$, and there exists  $A_1> 0$ such that for all $\uuu\in B(0,\eta)$
\[
	\|\di F(\uuu)^{-1}\|\leq A_1.
\]
We claim that any $A_1'$ satisfying $A_1'>2A_1$ gives a control on the growth of a potential inverse function defined on a ball $B(0,\zeta_1)$. More precisely: if  $A_1'\zeta_1<\eta$, then, for all $z\in B(0,\zeta_1)$, the equation $F(\uuu)=z$ admits a solution $\uuu\in B(0,\eta)$ satisfying 
\begin{equation}\label{estimate on tau 1 first version}
	|\uuu|\leq A_1'|z|.
\end{equation}
\end{proposition}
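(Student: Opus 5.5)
We aim for an existence result with no Lipschitz bound on $\di F$, so we avoid the usual contraction argument and use a continuation argument along the straight segment in the target space; Ekeland's variational principle is a possible alternative. Fix $z\in B(0,\zeta_1)$ with $z\neq 0$ (the case $z=0$ is trivial with $\uuu=0$). We try to solve the ODE
\[
	\uuu'(\tau) = \di F(\uuu(\tau))^{-1} z,\qquad \uuu(0)=0,\qquad \tau\in[0,1].
\]
Since $F$ is $C^1$ and $\di F$ is invertible on $B(0,\eta)$, the map $\uuu\mapsto \di F(\uuu)^{-1}z$ is continuous on $B(0,\eta)$. Peano's theorem therefore gives a local solution, although it need not be unique. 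Along any solution we have $\frac{\di}{\di\tau}F(\uuu(\tau)) = z$, hence $F(\uuu(\tau))=\tau z$. We also have the speed bound $|\uuu'(\tau)|\le A_1|z|$, which gives $|\uuu(\tau)|\le A_1|z|\tau$ for as long as the solution remains in $B(0,\eta)$.

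The key step is to show that a solution exists on all of $[0,1]$. We take a maximal solution on $[0,\tau^*)$ inside $B(0,\eta)$, and argue as follows.

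\begin{enumerate}
	\item Suppose $\tau^*\le 1$.
	\item Then $|\uuu(\tau)|\le A_1|z| < A_1'\zeta_1/2 < \eta/2$ on $[0,\tau^*)$.
	\item Because $|\uuu'|\le A_1|z|$, the solution is Lipschitz, so $\uuu(\tau^*)$ exists as a limit.
	\item This limit lies in $\overline{B(0,\eta/2)}\subset B(0,\eta)$.
	\item Peano's theorem restarted at $\uuu(\tau^*)$ extends the solution beyond $\tau^*$, contradicting maximality.
\end{enumerate}

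Hence the solution reaches $\tau=1$, and $\uuu:=\uuu(1)$ satisfies $F(\uuu)=z$ with $|\uuu|\le A_1|z|\le A_1'|z|$. This is stronger than \eqref{estimate on tau 1 first version}; the slack $A_1'>2A_1$ is not needed in this argument. The maximal-solution step needs a little care without uniqueness, so we invoke the standard extension theorem for continuous ODEs on open sets: a maximal solution must leave every compact subset of the domain, but here the solution stays in the compact ball $\overline{B(0,\eta/2)}$.

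The main obstacle is exactly this continuation step in the absence of Lipschitz continuity of $\di F^{-1}$. If we prefer to avoid Peano-type subtleties, the fallback is Ekeland's approach, sketched below.

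\begin{enumerate}
	\item Minimize $\uuu\mapsto |F(\uuu)-z|$ over the closed ball $\overline{B(0,A_1'|z|)}$ with Ekeland's principle, using parameter $1/A_1'$.
	\item This produces $\bar\uuu$ with $|F(\bar\uuu)-z|\le |z|$ and $|\bar\uuu|\le A_1'\,(|z|-|F(\bar\uuu)-z|)$.
	\item Suppose $F(\bar\uuu)\neq z$. Test the direction $v=-\di F(\bar\uuu)^{-1}(F(\bar\uuu)-z)$.
	\item Along $v$, the residual $|F(\cdot)-z|$ decreases at rate $|F(\bar\uuu)-z|$, whereas Ekeland's inequality only allows a decrease of at most $|v|/A_1'\le (A_1/A_1')\,|F(\bar\uuu)-z|$.
	\item Since $A_1<A_1'$ (in particular when $A_1'>2A_1$), this is a contradiction.
	\item For the perturbed point to remain admissible we need $\bar\uuu$ in the interior of the ball; here the factor $2$ gives room, because Ekeland's inequality keeps $|\bar\uuu|$ strictly below $A_1'|z|$ once the residual is positive.
\end{enumerate}

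Finally, $A_1'|z|<A_1'\zeta_1<\eta$ guarantees that everything takes place inside $B(0,\eta)$ in both arguments.
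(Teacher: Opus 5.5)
Your argument is correct, and it takes a genuinely different route from the paper.

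The paper minimizes $\sigma\mapsto\frac12|F(\sigma)-z|^2$ over the compact ball $\bar B(0,A_1'|z|)$. Comparing the minimizer $\uuu$ with $\sigma=0$ gives $|F(\uuu)|\le 2|z|$. A mean-value identity $F(\uuu)=\di F(\theta\uuu)\uuu$ then gives $|\uuu|\le A_1|F(\uuu)|\le 2A_1|z|<A_1'|z|$, which is where the factor $2$ comes from. So $\uuu$ is interior, and the first-order condition $\di F(\uuu)^*(F(\uuu)-z)=0$ forces $F(\uuu)=z$.

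You instead lift the segment $\tau\mapsto\tau z$ through the continuous field $\uuu\mapsto\di F(\uuu)^{-1}z$. The speed bound $A_1|z|$ does double duty: it confines the solution a priori to a compact subset of $B(0,\eta)$, which makes Peano continuation work, and it gives the final estimate. Your Ekeland variant replaces the interiority argument by positivity of the residual and the Newton descent direction.

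What each route buys:
\begin{itemize}
\item The paper's route is shorter: compactness plus a critical-point condition.
\item Yours gives a sharper statement: $|\uuu|\le A_1|z|$ for every $|z|<\eta/A_1$ via the ODE, and only $A_1'>A_1$ via Ekeland.
\item More importantly, yours only ever uses $\di F(\cdot)^{-1}$ at single points. The paper's interiority step relies on a mean-value \emph{equality}, which fails for vector-valued maps when $d\ge2$. For $F(u_1,u_2)=(e^{u_1}\cos u_2-1,\,e^{u_1}\sin u_2)$ and $\uuu=(0,2\pi)$, one has $F(\uuu)=0$ while $\di F(\theta\uuu)\uuu\neq0$ for every $\theta$.
\item The integral form of the mean-value argument would need a bound on the inverse of $\int_0^1\di F(\theta\uuu)\,\di\theta$. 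Pointwise bounds on $\di F^{-1}$ do not provide this without a Lipschitz bound on $\di F$.
\end{itemize}

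One small correction to step 6 of your Ekeland sketch: the factor $2$ plays no role there. Interiority of $\bar\uuu$ comes entirely from $|\bar\uuu|\le A_1'(|z|-|F(\bar\uuu)-z|)<A_1'|z|$ once the residual is positive.
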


The solutions to the equation $F(\uuu)=z$ are clearly not unique. For instance, to stick to our framework, consider, for $d=2$, $\cv^{(1)}(x,y)=(-x,y)$, $\cv^{(2)}(x,y)=(-y,x)$ and let $\Phi_t^{(i)}$ be the flow associated to $\cv^{(i)}$. On the picture below, we can consider the path $A\to B$ (via the integral curve of $\cv^{(1)}$), then $B\to A$ via the integral curve of $\cv^{(2)}$. This corresponds to a non-trivial solution $\uuu$ to the equation $\Phi_{u_2}^{(2)}\circ\Phi_{u_1}^{(1)}(A)=A$.

	\begin{center}
	\begin{tikzpicture}
		\begin{axis}[
			axis equal,
			xmin=-5, xmax=0,
			ymin=0, ymax=5,
			samples=25,
			xlabel=$x$,
			ylabel=$y$,
			grid=both,
			title={Integral curves of $\cv^{(1)}(x,y)=(-x,y)$ and $\cv^{(2)}(x,y)=(-y,x)$},
			domain=-5:5,
			]

			\foreach \c in {-10, -5, -2, -1} {
				\addplot[
				domain=-5:-0.2,
				samples=200,
				thick,
				color=blue,
				] {\c/x};
				
				\addplot[
				domain=0.2:5,
				samples=200,
				thick,
				color=blue,
				] {\c/x};
			}

			\foreach \r in {1, 2, 3, 4} {
				\addplot[
				domain=0:360,
				samples=200,
				variable=\t,
				thick,
				color=olive,
				] ({\r*cos(\t)}, {\r*sin(\t)});
			}
			
			\fill (-4,1/4) circle (2pt) node[above left] {$A$};
			\fill (-1/4,4) circle (2pt) node[above left] {$B$};
			
			\end{axis}
	\end{tikzpicture}
\end{center}

\begin{proposition}\label{prop:from Ekeland Uniqueness} Under the hypotheses of Proposition~\ref{prop:from Ekeland}, assume that the Lipschitz property 
\begin{equation}\label{Lip DF}
	\|\di F(\uuu') - \di F(\uuu)\|\leq L_1|\uuu'-\uuu|,
\end{equation}
is satisfied for all $\uuu,\uuu'\in B(0,\eta)$, and that $\zeta_1$ satisfies the smallness condition
\begin{equation}\label{Lip DH for Uniq}
	A_1 A_1' \zeta_1 L_1<1.
\end{equation}
Then, if $z\in B(0,\zeta_1)$, the solution $\uuu\in B(0,A_1'\zeta_1)$ to the equation $F(\uuu)=z$ is unique.
\end{proposition}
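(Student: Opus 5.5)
The plan is to argue by contradiction, directly from a Taylor expansion of $F$ around one of the two putative solutions. Fix $z\in B(0,\zeta_1)$. Suppose $\uuu,\uuu'\in B(0,A_1'\zeta_1)$ both satisfy $F(\uuu)=F(\uuu')=z$. Since $A_1'\zeta_1<\eta$ by the hypotheses of Proposition~\ref{prop:from Ekeland}, both points lie in $B(0,\eta)$. That ball is convex, so the whole segment $[\uuu,\uuu']$ lies in $B(0,\eta)$. Hence the bound $\|\di F(\cdot)^{-1}\|\le A_1$ and the Lipschitz property \eqref{Lip DF} are available along the segment.

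Next we write, exactly as in \eqref{preLip DH},
\[
0=F(\uuu')-F(\uuu)=\di F(\uuu)\cdot(\uuu'-\uuu)+\int_0^1\big(\di F(\uuu+\theta(\uuu'-\uuu))-\di F(\uuu)\big)\cdot(\uuu'-\uuu)\,\mathrm d\theta .
\]
Applying $\di F(\uuu)^{-1}$ and using \eqref{Lip DF} gives
\[
|\uuu'-\uuu|\le A_1\int_0^1 L_1\theta|\uuu'-\uuu|^2\,\mathrm d\theta=\tfrac12 A_1L_1|\uuu'-\uuu|^2 .
\]
Now we bound the quadratic factor by the radius of the ball. Both points lie in $B(0,A_1'\zeta_1)$, so $|\uuu'-\uuu|<2A_1'\zeta_1$. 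Therefore, if $\uuu'\neq\uuu$, we may divide to obtain
\[
1\le\tfrac12A_1L_1|\uuu'-\uuu|<A_1A_1'\zeta_1L_1<1,
\]
where the last inequality is \eqref{Lip DH for Uniq}. This is a contradiction, so $\uuu=\uuu'$.

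Existence of a solution in $B(0,A_1'\zeta_1)$ is not part of this statement. It is provided by Proposition~\ref{prop:from Ekeland}, via the estimate \eqref{estimate on tau 1 first version} with $|z|<\zeta_1$. The present claim concerns uniqueness only.

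The only delicate point is the constant. The naive bound $|\uuu'-\uuu|\le 2A_1'\zeta_1$ costs a factor $2$, which is exactly compensated by the factor $\tfrac12$ that comes from integrating $\theta$ in the Lipschitz remainder. So one must keep the $\int_0^1\theta\,\mathrm d\theta=\tfrac12$ rather than bounding it crudely by $1$.
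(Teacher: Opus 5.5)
Your proof is correct and is essentially the paper's argument. The paper also writes the integral form of \eqref{preLip DH} for $F$, inverts $\di F(\uuu)$ to get $|\vvv-\uuu|\le\frac12A_1L_1|\vvv-\uuu|^2$, and then uses $|\vvv-\uuu|\le 2A_1'\zeta_1$ with \eqref{Lip DH for Uniq} to conclude $\vvv=\uuu$; your check that the segment stays in the convex ball $B(0,\eta)$ is a detail the paper leaves implicit.
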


\begin{proof}[Proof of Proposition~\ref{prop:from Ekeland}] To prove the existence of a solution satisfying \eqref{estimate on tau 1 first version}, we assume that $z \neq 0$ and we consider the minimum of the map 
\begin{equation}\label{minimization Ekeland}
	\sig\mapsto\frac12|F(\sig)-z|^2
\end{equation}
on $\bar{B}(0,r)$, where $r:=A_1'|z|$. By compactness, this minimum is reached at a point $\uuu$. We have 
\[
	F(\uuu) = \di F(\theta\uuu)\uuu,
\]
for a given $\theta\in[0,1]$, so by inversion and by the bound $\|\di F(\theta\uuu)^{-1}\|\leq A_1$, valid since $|\theta\uuu|\leq A_1'\zeta_1<\eta$, the modulus of $\uuu$ can be bounded as follows:
\[
	|\uuu|\leq A_1 |F(\uuu)|.
\]
Next, by optimality of $\uuu$,
\[
    |F(\uuu)|\leq|F(\uuu)-z| +|z|\leq|F(0)-z|+|z|=2|z|,
\]
and we get
\begin{equation}\label{toto estimate on tau 1}
	|\uuu|\leq 2A_1 |z|<r,
\end{equation}
which shows that $\uuu$ is interior to $\bar{B}(0,r)$. By differentiation in \eqref{minimization Ekeland}, we infer that
\[
	0 = \di F(\uuu)^*(F(\uuu)-z),
\]
where $\di F(\uuu)^*$ is the adjoint of $\di F(\uuu)$, and so invertible as well: we conclude that $F(\uuu)=z$. The bound  \eqref{estimate on tau 1 first version} follows from \eqref{toto estimate on tau 1}.
\end{proof}

\begin{proof}[Proof of Proposition~\ref{prop:from Ekeland Uniqueness}] We use \eqref{preLip DH} with $F$ instead of $H$ and $z=z_*$, to get
\[
	\vvv-\uuu = -\int_0^1 \di F(\uuu)^{-1}(\di F(\theta\vvv+(1-\theta)\uuu)-\di F(\uuu))(\vvv-\uuu)\,\mathrm d\theta,
\]
and thus
\[
	|\vvv-\uuu|\leq \frac12 A_1 L_1 |\vvv-\uuu|^2\leq A_1 A_1' \zeta_1 L_1 |\vvv-\uuu|,
\]
so $\vvv=\uuu$ if \eqref{Lip DH for Uniq} is satisfied.
\end{proof}

\section{Vector fields with constant coefficients}\label{sec:aconstant}

In this last appendix, we give the proof of the classical averaging lemma for vector fields $\cv$ not depending on the space variable $x\in\mathbb R^d$, in the case $p=2$. In this very particular situation, we can make use of Fourier analysis tools, which allows to obtain a sharp estimate with very few arguments.

\begin{proposition}\label{a inde}
Assume that $p=2$ and that the random vector field $\cv$ does not depend on the space variable $x\in\mathbb R^d$. Then, the modulus of continuity of the function $\varrho$ enjoys the following estimate for some positive constant $\kappa>0$,
\begin{equation}\label{opti 2}
	\forall r>0, \quad\sup_{|y|\le r}\|\tau_y\varrho-\varrho\|_{L^2(\mathbb R^d)}\leq \kappa \Vert g\Vert_{L^2(\Omega\times\mathbb R^d)} r^{\alpha/2}.	
\end{equation}
\end{proposition}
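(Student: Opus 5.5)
The plan is to use the Fourier transform in $x$. When $\cv=\cv(\omega)$ is a constant vector (random, but independent of $x$), equation \eqref{f plus agrad f} becomes, after Fourier transform, almost surely
\[
	(1-2\pi i\,\cv\cdot\xi)\hat f(\omega,\xi)=\hat g(\omega,\xi),
\]
so that $\hat f=\hat g/(1-2\pi i\cv\cdot\xi)$. This identification holds since $f\in L^2$ and the multiplier $1-2\pi i\cv\cdot\xi$ never vanishes, so no divergence condition is needed here. Averaging gives
\[
	\hat\varrho(\xi)=\E\Big[\frac{\hat g(\xi)}{1-2\pi i\,\cv\cdot\xi}\Big].
\]
By Cauchy--Schwarz in $\omega$,
\[
	|\hat\varrho(\xi)|^2\le \E\big[|\hat g(\xi)|^2\big]\,\E\Big[\frac{1}{1+4\pi^2|\cv\cdot\xi|^2}\Big].
\]

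The key step is to bound the second factor using \eqref{nd-a}. Write $\xi=|\xi|\sigma$ with $|\sigma|=1$ and split the expectation according to whether $|\cv\cdot\sigma|<\eps$ or not, with $\eps=|\xi|^{-1}$ when $|\xi|\ge1$. On the event $\{|\cv\cdot\sigma|<\eps\}$ we bound the integrand by $1$, and this event has probability at most $C|\xi|^{-\alpha}$. On the complement we use a layer-cake decomposition: set $E_k=\{2^k\eps\le|\cv\cdot\sigma|<2^{k+1}\eps\}$. Its probability is at most $C(2^{k+1}\eps)^\alpha$ as long as $2^{k+1}\eps<1$, and at most $1$ otherwise. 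On $E_k$ the integrand is at most $(1+4\pi^2 4^k)^{-1}$. Summing over $k$ gives $\sum_k C 2^{(k+1)\alpha}4^{-k}\eps^\alpha\lesssim|\xi|^{-\alpha}$, because $\alpha\le1<2$ makes the series converge. The event $|\cv\cdot\sigma|\ge1$ contributes $\lesssim|\xi|^{-2}\le|\xi|^{-\alpha}$. Altogether,
\[
	\E\big[(1+4\pi^2|\cv\cdot\xi|^2)^{-1}\big]\le C'\min(1,|\xi|^{-\alpha}).
\]

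Finally, by Plancherel,
\[
	\|\tau_y\varrho-\varrho\|_{L^2}^2=\int|e^{-2\pi i y\cdot\xi}-1|^2|\hat\varrho(\xi)|^2\,\di\xi\le C'\int\min(4,4\pi^2|y|^2|\xi|^2)\min(1,|\xi|^{-\alpha})\,\E|\hat g(\xi)|^2\,\di\xi .
\]
The weight satisfies $\min(4,4\pi^2r^2|\xi|^2)\min(1,|\xi|^{-\alpha})\lesssim r^\alpha$ uniformly in $\xi$. For $|\xi|\ge1/r$ it is at most $4|\xi|^{-\alpha}\le4r^\alpha$. For $|\xi|<1/r$ it is at most $4\pi^2 r^2|\xi|^{2-\alpha}\le 4\pi^2 r^{\alpha}$, using $2-\alpha>0$; in the region $|\xi|<1$ the factor $r^2|\xi|^2\le r^2$ is even smaller when $r\le1$. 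For $r>1$ one simply bounds the whole quantity by $4\|\varrho\|^2\le 4\|g\|^2\le 4r^\alpha\|g\|^2$. Applying Fubini, $\int\E|\hat g|^2=\|g\|_{L^2_{\omega,x}}^2$, so taking square roots gives \eqref{opti 2}.

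The only real obstacle is the dyadic decomposition in the middle step, namely checking that the sum converges. It does because $\alpha<2$ and because \eqref{nd-a} holds uniformly in $\sigma$, which makes $C'$ independent of the direction of $\xi$.
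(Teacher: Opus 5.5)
Your proof is correct and follows essentially the paper's argument. You invert the equation in Fourier variables, apply Cauchy--Schwarz in $\omega$, use a dyadic decomposition of $|\cv\cdot\xi|$ together with \eqref{nd-a} to get $\E\big[(1+4\pi^2|\cv\cdot\xi|^2)^{-1}\big]\lesssim\min(1,|\xi|^{-\alpha})$, and conclude with Plancherel. The only cosmetic difference is that you bound the Plancherel weight $\min(4,4\pi^2 r^2|\xi|^2)\min(1,|\xi|^{-\alpha})$ directly, whereas the paper first proves $\varrho\in H^{\alpha/2}$ and then uses $|e^{ix\cdot\xi}-e^{iy\cdot\xi}|\le 2^{1-\theta}|x-y|^{\theta}|\xi|^{\theta}$; you are also slightly more careful than the paper about invoking \eqref{nd-a} only for $\eps<1$.
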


\begin{proof}
We will actually prove a stronger result, namely, an estimate for the function $\varrho$ in the Sobolev space $H^{\alpha/2}(\mathbb R^d)$,
\begin{equation}\label{eq:opti sobolev regularity}
    \|\varrho\|_{H^{\alpha/2}_x}\lesssim\|g\|_{L^2_{\omega,x}},
\end{equation}
which implies the inequality \eqref{opti 2} since the following bound holds for every $r>0$,
\begin{equation}\label{bound modulus continuity}
	\sup_{|y|\le r}\|\tau_y\varrho-\varrho\|_{L^2_x}\lesssim\Vert\varrho\Vert_{H^{\alpha/2}_x}r^{\alpha/2}.
\end{equation}
Notice that \eqref{bound modulus continuity} can be derived by using Plancherel's theorem and the inequality
\[
	\forall\theta\in[0,1], \forall x,y,\xi\in\mathbb R^d,\quad \vert e^{ix\cdot\xi}-e^{iy\cdot\xi}\vert\le 2^{1-\theta}\vert x-y\vert^{\theta}\vert\xi\vert^{\theta}.
\]
First of all, as a consequence of \eqref{averaged contraction Lp} (with $\lambda=0$), we have
\[
    \int_{\R^d} \left(\langle\xi\rangle^{\alpha}\wedge 2\right) |\hat{\varrho}(\xi)|^2\,\mathrm d\xi
    \lesssim\|\varrho\|_{L^2_x}^2\lesssim\|g\|^2_{L^2_{\omega,x}}.
\]
Consequently, it is sufficient to prove that
\begin{equation}\label{highmode}
    \int_{|\xi|\geq 1} |\xi|^{\alpha} |\hat{\varrho}(\xi)|^2\,\mathrm d\xi
    \lesssim\|g\|_{L^2_x}^2.
\end{equation}
After taking the Fourier transform, the equation \eqref{f plus agrad f} reads
\[
    (1-2\pi i a\cdot\xi)\hat{f}(\xi)=\hat{g}(\xi)
\]
and so, for all $\xi\in\R^d$,
\[
    \hat{\varrho}(\xi)=\E\left[\frac{\hat{g}(\xi)}{1-2\pi i a\cdot\xi}\right].
\]
By the Cauchy-Schwarz inequality, it follows that
\[
    |\hat{\varrho}(\xi)|^2\leq 2\E\left[\frac{1}{1+4\pi^2 |a\cdot\xi|^2}\right]\E\left[|\hat{g}(\xi)|^2\right].
\]
We will show that
\begin{equation}\label{GoodEstimate}
   \forall|\xi|\geq 1,\quad \E\left[\frac{1}{1+4\pi^2 |a\cdot\xi|^2}\right]\lesssim \frac{1}{|\xi|^\alpha}.
\end{equation}
which will yield \eqref{highmode}. To prove \eqref{GoodEstimate}, we consider the expansion
\begin{equation}\label{DecGoodEstimate}
    \E\left[\frac{1}{1+4\pi^2 |a\cdot\xi|^2}\right]=\sum_{j\in\Z}\E\left[\frac{\mathbbm{1}_{\{2^j <|a\cdot\xi|\leq 2^{j+1}\}}}{1+4\pi^2 |a\cdot\xi|^2}\right].
\end{equation}
The sum in \eqref{DecGoodEstimate} is on $j\leq J$ with $2^{J}=M|\xi|$, where $M$ is the a.s. bound for $\cv$ in \eqref{definition norme}, and can be estimated with the non-degeneracy condition \eqref{nd-a} by 
\begin{equation}\label{DecGoodEstimate-2}
    \sum_{j\leq J}\frac{\PP(|a\cdot\xi|\leq 2^{j+1})}{1+4\pi^2 2^{2j}}
    \leq C2^\alpha|\xi|^{-\alpha}\sum_{j\leq J}\frac{2^{\alpha j}}{1+4\pi^2 2^{2j}}.
\end{equation}
The sum over $j$ in \eqref{DecGoodEstimate-2} is bounded as follows:
\[
    \sum_{j\leq 0}\frac{2^{\alpha j}}{1+4\pi^2 2^{2j}}\leq \sum_{j\leq 0} 2^{\alpha j}\lesssim 1,
\]
while
\[
    \sum_{0<j\leq J}\frac{2^{\alpha j}}{1+4\pi^2 2^{2j}}\lesssim
    \sum_{0<j}2^{(\alpha-2)j}\lesssim 1.
\]
This ends the proof of the estimate \eqref{eq:opti sobolev regularity}.
\end{proof}

\medskip

{\small
\noindent\textsc{(Paul Alphonse) Univ Toulouse, IMT, Toulouse, France} \\
\textit{Email address}: \verb|paul.alphonse@math.univ-toulouse.fr|
}
\newline

{\small
\noindent\textsc{(Billel Guelmame) New York University Abu Dhabi, UAE} \\
\textit{Email address}: \verb|billel.guelmame@nyu.edu|
}
\newline

{\small
\noindent\textsc{(Julien Vovelle) CNRS, Universit\'e de Lyon, ENSL, UMPA - UMR 5669, F-69364 Lyon, France} \\
\textit{Email address}: \verb|julien.vovelle@ens-lyon.fr|
}

\end{document}